%% file: before_journal.tex
\documentclass[11pt]{article}

\usepackage[margin=1in]{geometry}
\usepackage{amsmath,amssymb,amsthm,mathtools}
\usepackage{enumitem}
\usepackage{xcolor}
\usepackage{microtype}
\usepackage{booktabs}
\usepackage[hidelinks]{hyperref}
\usepackage[T1]{fontenc}
\usepackage{lmodern}
\usepackage{algorithm}
\usepackage{algpseudocode}
\usepackage{float}
\usepackage{comment}
\usepackage[title]{appendix}
\usepackage{thmtools}
\usepackage{thm-restate}
\usepackage{tikz}
\usetikzlibrary{arrows.meta,decorations.pathreplacing,positioning}
\usepackage{adjustbox}
\allowdisplaybreaks

\newtheorem{theorem}{Theorem}[section]
\newtheorem{lemma}[theorem]{Lemma}
\newtheorem{proposition}[theorem]{Proposition}
\newtheorem{corollary}[theorem]{Corollary}
\newtheorem{claim}[theorem]{Claim}
\theoremstyle{definition}
\newtheorem{definition}[theorem]{Definition}
\theoremstyle{remark}
\newtheorem{remark}[theorem]{Remark}
\newtheorem{example}[theorem]{Example}

\newcommand{\R}{\mathbb{R}}

\newcommand{\widthf}{\operatorname{wd}}
\newcommand{\conv}{\operatorname{conv}}
\newcommand{\proj}{\operatorname{proj}}
\newcommand{\rec}{\operatorname{rec}}
\newcommand{\aff}{\operatorname{aff}}

\newcommand{\eps}{\varepsilon}
\newcommand{\ind}{\chi}
\newcommand{\cL}{c_{\mathcal L}}

\newcommand{\gap}{\operatorname{gap}}
\newcommand{\oo}[1]{{\textcolor{purple}{#1}}}
\newcommand{\ooc}[1]{{\textcolor{purple}{[OG: \oo{#1}]}}}

\newcommand{\ignore}[1]{}

\title{Boxing Problem}

\author{
    April Niu\thanks{
    \raggedright
        School of Industrial and Systems Engineering, Georgia Institute of Technology. \texttt{yniu74@gatech.edu}.
    }
    \and
    Santanu Dey\thanks{
    \raggedright
        School of Industrial and Systems Engineering, Georgia Institute of Technology.
        \texttt{santanu.dey@isye.gatech.edu}.
    }
    \and
    Oktay G{\"u}nl{\"u}k\thanks{
    \raggedright
    School of Industrial and Systems Engineering, Georgia Institute of Technology.
   \texttt{oktay.gunluk@isye.gatech.edu}
    }
    \and
    Jisun Lee\thanks{
    \raggedright
    Walker Department of Mechanical Engineering, University of Texas at Austin.
    \texttt{jisun.lee@austin.utexas.edu}
    }
}

\date{\today}
\begin{document}

\maketitle
\begin{abstract}
\input{abstract}

\end{abstract}

\input{intro}

\input{mixing_v2}
\input{properties}

\input{1Dboxing}

\input{higher_dim}

\input{discussion}

\bibliographystyle{acm}
\bibliography{ref}
\newpage
\begin{appendices}
\input{appendix}
\end{appendices}

\end{document}

%% file: abstract.tex
We study the \textit{Boxing Problem}, which seeks a subset of points in 
\(\mathbb{R}^d\) maximizing the total reward of a selection of points minus the width of its axis-aligned bounding box. We also investigate the associated \textit{Boxing Set}, defined by the epigraph of the width function over binary selection variables. Although the width function is not submodular in general, we restore submodularity by fixing an anchor point. After anchoring, we establish a connection to the intersection of mixing sets with a linking constraint, which yields a polynomial-time optimization algorithm and a polynomial-size extended formulation. We then study the polyhedral structure of the Boxing Set and show that its facets admit laminar families of tight sets and can be obtained by sequential lifting. For the one-dimensional case, we provide a complete description of facet-defining inequalities together with a strongly polynomial-time separation algorithm.

%% file: intro.tex
\section{Introduction}\label{sec:intro}

Let $p^1,\ldots,p^n\in\R^d$ be an indexed collection of points, with
	$p^i=(p^i_1,\ldots,p^i_d)$ for each $i\in[n]$. For each nonempty subset
	$S\subseteq[n]$ and coordinate $t\in[d]$, define
\[
 u_t(S):=\max_{i\in S}p_t^i,\qquad
 \ell_t(S):=\min_{i\in S}p_t^i,\qquad
 \widthf_t(S):=u_t(S)-\ell_t(S).
\]
Set $\widthf_t(\varnothing):=0$ for all $t\in[d]$, and and define the total
	width of any subset $S\subseteq[n]$ as
$\widthf(S):=\sum_{t=1}^d\widthf_t(S)$.
In other words,  $\widthf(S)$ is the sum of the side lengths of the smallest axis-aligned box containing the points indexed
by $S$. Given nonnegative rewards $c_i$ for $i\in[n]$, let
$c(S):=\sum_{i\in S}c_i$ denote the total reward	of the points indexed by $S$. The \emph{Boxing Problem} is

\begin{definition}[Boxing Problem]\label{def:boxing-problem}
Given $p^1,\ldots,p^n\in\R^d$ and $c\in\R_+^n$, compute
\[
\operatorname{BP}(c):=\max_{S\subseteq[n]}\{c(S)-\widthf(S)\}.
\]
\end{definition}

For $z\in\{0,1\}^n$, let $S(z):=\{i\in[n]:z_i=1\}$. For
$S\subseteq[n]$, let $\ind^S$ denote its incidence vector. We study the
convex hull of the associated feasible set.

\begin{definition}[Boxing set]
The \emph{Boxing Set} is
\begin{equation*}\label{eq:def-B}
B:=\{(w,z)\in\R\times\{0,1\}^n:w\ge\widthf(S(z))\}.
\end{equation*}
\end{definition}

For $J\subseteq[n]$, we also define the restricted boxing set
\begin{equation*}\label{eq:restricted-boxing-set}
B[J]:=B\cap\{(w,z)\in\R\times\{0,1\}^n:
z_i=0\text{ for every }i\notin J\}.
\end{equation*}

The Boxing Problem arises naturally in hyper-rectangular
clustering, where points are partitioned into clusters
to minimize the sum of the widths of their axis-aligned
bounding boxes~\cite{DelleDonneMarenco2025,Marenco2023}.
Delle Donne and Marenco~\cite{DelleDonneMarenco2025} develop a branch-and-price
algorithm for this problem.

To illustrate the connection, consider partitioning
the points into exactly $K$ nonempty clusters. Let
$\mathcal C=2^{[n]}\setminus\{\emptyset\}$ and introduce
a binary variable $y_S$ for each potential cluster
$S\in\mathcal C$. A set-partitioning formulation is
{\begin{equation*}
\min\ \sum_{S\in\mathcal C}\widthf(S)y_S \quad \text{s.t.}\quad \sum_{S\in\mathcal C:~i\in S}y_S=1~ \forall i\in[n],\quad \sum_{S\in\mathcal C}y_S=K,\quad y_S\in\{0,1\}~\  \forall S\in\mathcal C.
\end{equation*}}
\ignore{\begin{equation}
\begin{aligned}
\min\quad
& \sum_{S\in\mathcal C}\widthf(S)y_S\\
\text{s.t.}\quad
& \sum_{\substack{S\in\mathcal C\\i\in S}}y_S=1,
&& i\in[n],\\[.15cm]
& \sum_{S\in\mathcal C}y_S=K,\\
& y_S\in\{0,1\},
&& S\in\mathcal C.
\end{aligned}
\end{equation}}
Since the number of possible clusters is exponential,
column generation can be used to solve the linear
relaxation. Let $c_i$ be the dual variable associated
with the assignment constraint of point $i$, and let
$\gamma$ be the dual variable for the cluster-count
constraint. Then, finding a column of minimum reduced cost amounts to solving \(\min_{\emptyset\neq S\subseteq[n]}{wd(S)-c(S)}\), up to the additive constant \(-\gamma\). Since the dual variables \(c_i\) are unrestricted in sign, this is a signed-reward variant of the Boxing Problem restricted to nonempty subsets. 

The width function is not submodular on all of $2^{[n]}$, since the empty
intersection of two nonempty sets can violate the submodular inequality.
However, it satisfies that inequality whenever the two sets intersect.
We use this structure, together with a connection to mixing sets, to analyze the Boxing Set.

\subsection{Related work}

Submodular set functions have a well-developed optimization and polyhedral theory. Edmonds~\cite{Edmonds1970} established the greedy algorithm for optimizing linear functions over submodular polyhedra, while the Lov\'asz extension gives a convex representation of a submodular function and describes the convex hull of its binary epigraph through extended polymatroid
inequalities~\cite{Lovasz1983}. Atamt\"urk and
Narayanan~\cite{AtamturkNarayanan2022} develop a polarity-based
approach to set-function epigraphs. Their polar relaxation,
based on homogeneous inequalities, is exact for submodular
functions but generally provides only an outer approximation
for non-submodular functions. Related polyhedral
convexification results have also been developed for broader
classes of submodular functions
\cite{YuKucukyavuz2021,YuKucukyavuz2023,YuKucukyavuz2025}.
In contrast, the box-width function is not globally
submodular: it satisfies the submodular inequality for
intersecting sets, but the inequality may fail for disjoint
sets. Consequently, the standard submodular epigraph
description does not directly characterize the Boxing Set.
Our approach  considers the normalized polar
coefficient polyhedron, which explicitly includes the
right-hand-side variable needed to characterize nontrivial
facets.

The algorithmic theory of submodular functions also provides
important tools for our analysis. Edmonds' greedy algorithm
and its extensions to intersecting-submodular polyhedra are
closely related to Dilworth truncation and the optimization
of partition functions~\cite{Edmonds1970,Fujishige2005}.
Nagano, Kawahara, and Iwata~\cite{NaganoKawaharaIwata2010}
use these connections to study parametric partition
optimization. Strongly polynomial algorithms for
submodular-function minimization are also available
\cite{Orlin2009}. We exploit these techniques and provide a modified greedy procedure
based on submodular-function minimization to compute the facet right-hand sides.

Although submodularity is restricted, the Boxing Set has a rich combinatorial structure that allows \textit{uncrossing arguments}.
Uncrossing is widely used in polyhedral combinatorics and
approximation algorithms to establish laminar families of
tight constraints. For example, Jain~\cite{Jain2001} shows
that the tight cut constraints of a basic LP solution in
survivable network design admit a laminar basis, which is
central to his iterative-rounding algorithm. Related
techniques appear in primal-dual network-design algorithms
\cite{GoemansWilliamson1995,WilliamsonEtAl1995} and are
developed systematically by Lau, Ravi, and
Singh~\cite{LauRaviSingh2011}. Our use of uncrossing is
different: rather than analyzing the tight constraints of a
basic LP solution, we uncross the tight constraints of the
normalized polar coefficient polyhedron associated with a
facet of the Boxing Set. {This is possible because the width function is intersecting
submodular, even though it
is not  submodular.} The resulting laminar family
uniquely determines the facet coefficients. By further
analyzing its structure, we prove that every nontrivial
facet can be obtained through sequential lifting. Thus,
our approach extends the polarity-based viewpoint of
Atamt\"urk and Narayanan~\cite{AtamturkNarayanan2022}
from homogeneous inequalities to a structural analysis of
nontrivial facets with positive right-hand sides.

A separate line of related work concerns mixing sets and
their intersections. G\"unl\"uk and Pochet~\cite{GP01}
develop mixing inequalities, while Atamt\"urk, Nemhauser,
and Savelsbergh~\cite{an00} study systems with multiple
continuous variables sharing common binary variables.
Conforti, Wolsey, and Zambelli
\cite{ConfortiWolseyZambelli2010} and
K\"u\c{c}\"ukyavuz~\cite{Kucukyavuz2012} investigate
related mixing-set formulations and additional constraints.
Most closely related to our work, K\i l\i n\c{c}-Karzan,
K\"u\c{c}\"ukyavuz, and Lee~\cite{kkl22} establish a
connection between mixing inequalities and submodularity,
and study intersections of mixing sets with common binary
variables under an additional linking constraint. Their approach
exploits the submodular structure underlying mixing sets.
However, it does not directly characterize the Boxing Set,
because the original box-width function is not submodular.
We overcome this difficulty through \textit{anchoring}:
fixing one selected point expresses the width as a sum of
one-sided maxima, each of which is submodular. Thus, every anchored Boxing Set can be represented by an intersection
of mixing sets with an epigraph linking constraint.
Combining these anchored sets by disjunctive programming
then yields a compact extended formulation for the original
Boxing Set. Beyond this mixing-set representation, our
polyhedral analysis directly addresses the original
non-submodular width function, leading to the laminar
facet structure and the complete one-dimensional
convex-hull description.

In recent independent work, Wu, He, and Tawarmalani~\cite{WuHeTawarmalani2026} develop a
convexification framework for disjunctive submodular
functions with any finite number of disjuncts.
They obtain compact LP separation formulations when
the component epigraph convex hulls admit compact
formulations, and strongly polynomial separation for
the minimum of two submodular functions on a common
lattice. Our anchoring construction fits their
pointwise-minimum framework, while their greedy
construction and Dilworth truncation connect to our
fixed-right-hand-side lifting and partition analysis.
We exploit the geometry of the Boxing Problem to
obtain sparse maximum-weight-closure formulations,
a compact extended formulation, and a minimum-cost-flow
separation algorithm in one dimension. Our polyhedral
analysis further establishes laminar facet structure,
a complete one-dimensional facet description, and
a characterization of facet right-hand-side parameters
through partitions of tight sets.

\subsection{Contributions}
 We establish new optimization and polyhedral results for the Boxing Set, whose underlying width function is a monotone set function that is not globally submodular.

\begin{itemize}
\item \textbf{Optimization and extended formulation.}
We reduce optimization over the intersection of multiple mixing sets to maximum-weight graph closure.
This connection to mixing sets yields a maximum-weight-closure algorithm for the Boxing Problem and a polynomial-size extended formulation for its convex hull.

\item \textbf{General facet structure.} We develop a dimension-independent structural analysis of the facets of the Boxing Set. We show that the tight sets of every nontrivial facet admit a laminar basis. We then prove that every such facet can be obtained through sequential lifting in an ordering induced by the corresponding laminar forest.

\item \textbf{Complete description in one dimension.}
We characterize all possible  right-hand sides for facet defining inequalities and identify exactly which lifting sequences would yield facet defining inequalities that give the convex hull. We also give a strongly polynomial
minimum-cost-flow separation algorithm.

\item \textbf{Higher-dimensional extensions.}
We characterize facet right-hand sides through partitions of tight sets,
give polynomial-time algorithms for computing the associated partition
parameter for a fixed subset and the smallest facet right-hand side, and
identify additional families of facets.
\end{itemize}

The remainder of the paper is organized as follows.
Section~\ref{sec:multi-mixing} revisits extended formulations and
develops optimization algorithms for the intersection of mixing sets over common binary variables. Section~\ref{sec:ef}
applies these results to the Boxing Set. Section~\ref{sec:basic}
studies general polyhedral properties, including laminarity and sequential
lifting. Section~\ref{sec:onedim} gives the complete one-dimensional
facet description and separation algorithm, and
Section~\ref{sec:higher-dim} investigates the higher-dimensional case.

%% file: mixing_v2.tex
\section{Multi-Mixing Sets}\label{sec:multi-mixing}

We begin by providing an algorithmic reformulation of
known mixing-set structure~\cite{GP01} that is particularly useful
for the Boxing Problem. For multiple mixing sets
sharing common binary variables and having independent
continuous variables, existing convex-hull results
yield a compact extended formulation. We then exploit
the ordered structure of the mixing coefficients to
reduce linear optimization over their intersection
to a maximum-weight graph closure problem, which can
be solved by a minimum $s$--$t$ cut.

This approach is closely related to the graph-based
formulations of Conforti, Wolsey, and
Zambelli~\cite{ConfortiWolseyZambelli2010}, who study
mixed-integer covers on bipartite graphs and their
equivalence to network-dual formulations. While their
work focuses on deriving convex-hull descriptions
through projections of extended formulations, our
focus is on constructing a sparse closure graph that
directly solves the multi-mixing optimization problem.
This representation will be used in Section~\ref{sec:ef} to
solve the Boxing Problem after fixing an anchor point.

Let $k\ge1$. For each $r\in[k]$, let $q^r=(q_r^i)_{i\in[n]}\in\mathbb{R}_+^n$. Consider $k$ mixing sets \cite{GP01} sharing the same binary
vector $z$:
\[
\operatorname{MIX}_r({q})
:=
\conv\left\{
(u,z)\in\mathbb{R}^k\times\{0,1\}^n:
u_r\ge q^i_r z_i
\quad \forall i\in[n]
\right\}.
\]

We call the set
\begin{equation}\label{eq:multi-mixing-set}
\operatorname{MMIX}(q)
:=
\left\{(u,z)\in\R^k\times\{0,1\}^n:
 u_r\ge q_r^i z_i\quad\forall i\in[n],\ r\in[k]\right\}
\end{equation}
the \emph{multi-mixing set}. It is the intersection of $k$ mixing
constraints with common binary variables and independent continuous
variables. The convex hull of such intersections was studied by
Atamt\"urk, Nemhauser, and Savelsbergh~\cite{an00}, and later by
K\i l\i n\c c-Karzan, K\"u\c c\"ukyavuz, and Lee~\cite{kkl22}.
In particular, for this multi-mixing set without additional linking
constraints, their results give
\begin{equation}\label{eq:multi-mixing-intersection}
\conv\bigl(\operatorname{MMIX}(q)\bigr)
=\bigcap_{r\in[k]}\conv(\operatorname{MIX}_r({q})).
\end{equation}
where the convex hull of the mixing set is given by the  mixing inequalities \cite{GP01}

\subsection{{An extended formulation for the convex hull of multi-mixing set}}\label{subsec:EF-MMS}
For every $r\in[k]$, choose a permutation $\sigma_r:[n]\to[n]$ such that $q_r^{\sigma_r(1)}
\le
q_r^{\sigma_r(2)}
\le\cdots\le
q_r^{\sigma_r(n)}$.
Set $\eta_{r,0}:=0,$ and $\eta_{r,\ell}:=q_r^{\sigma_r(\ell)},$ for $ \ell\in[n]$.
Define $\Delta_{r,\ell}:=\eta_{r,\ell}-\eta_{r,\ell-1}.$ For each $r\in[k]$, introduce a block $y^r\in\mathbb{R}^n$. The extended formulation of the mixing set $\operatorname{MIX}_r$ is given by \cite{CGZ10}
\begin{equation}\label{eq:mixing-ef}
\widehat{\operatorname{MIX}}_r
:=
\left\{
(u,z,y^r):
\begin{aligned}
z_{\sigma_r(\ell)}
    &\le y^r_\ell
        && \ell\in[n],\\
y^r_{\ell+1}
    &\le y^r_\ell
        && \ell=1,\ldots,n-1,\\
u_r
    &\ge \sum_{\ell=1}^n
        \Delta_{r,\ell}y^r_\ell,\\
0\le z_i&\le1
        && i\in[n],\\
0\le y^r_\ell&\le1
        && \ell\in[n].
\end{aligned}
\right\}.
\end{equation}
Thus $\proj_{u,z}
\bigl(
\widehat{\operatorname{MIX}}_r(q)
\bigr)
=
\operatorname{MIX}_r(q)$.

\begin{theorem}\label{thm:mix-intersection-EF}
The convex hull of the multi-mixing set~\eqref{eq:multi-mixing-set} admits an extended
formulation with $O(kn)$ variables and inequalities.
\end{theorem}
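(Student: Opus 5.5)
We will combine the $k$ single-set extended formulations $\widehat{\operatorname{MIX}}_r$ of \eqref{eq:mixing-ef} into one system that shares the variables $(u,z)$ and uses a separate block $y^r$ for each $r\in[k]$:
\[
\widehat{\operatorname{MMIX}}:=\bigl\{(u,z,y^1,\ldots,y^k):(u,z,y^r)\in\widehat{\operatorname{MIX}}_r\ \forall r\in[k]\bigr\}.
\]
Counting gives $k+n+kn$ variables. Each block contributes $n$ linking inequalities $z_{\sigma_r(\ell)}\le y^r_\ell$, $n-1$ monotonicity inequalities, one inequality bounding $u_r$, and $2n$ bounds on $y^r$. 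Adding the $2n$ bounds on $z$, the total is $O(kn)$ variables and inequalities.

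It remains to show $\proj_{u,z}(\widehat{\operatorname{MMIX}})=\conv(\operatorname{MMIX}(q))$. The argument rests on one observation: the block $y^r$ appears only in the constraints of block $r$. Consequently a point $(u,z)$ lies in the projection if and only if, for every $r$ separately, some $y^r$ exists with $(u,z,y^r)\in\widehat{\operatorname{MIX}}_r$.

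For the inclusion $\subseteq$, take $(u,z,y)\in\widehat{\operatorname{MMIX}}$. The stated projection property $\proj_{u,z}(\widehat{\operatorname{MIX}}_r)=\operatorname{MIX}_r(q)$ then places $(u,z)$ in $\operatorname{MIX}_r(q)$ for every $r$. For $\supseteq$, take $(u,z)\in\bigcap_r\operatorname{MIX}_r(q)$. For each $r$ the projection property supplies some $y^r$ with $(u,z,y^r)\in\widehat{\operatorname{MIX}}_r$, and stacking these blocks gives a point of $\widehat{\operatorname{MMIX}}$. Hence the projection equals $\bigcap_r\operatorname{MIX}_r(q)$, and \eqref{eq:multi-mixing-intersection} identifies this intersection with $\conv(\operatorname{MMIX}(q))$.

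One point needs care. $\operatorname{MIX}_r$ is defined with $u\in\R^k$, yet only the coordinate $u_r$ is constrained. The projection identity must therefore be read with the remaining coordinates $u_{r'}$, $r'\ne r$, left free in both sets. It holds in that form because $\widehat{\operatorname{MIX}}_r$ also leaves those coordinates unconstrained.

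The substantive step is the intersection identity \eqref{eq:multi-mixing-intersection}, which we take from \cite{an00,kkl22}. If a self-contained argument were wanted, the approach would be to show that for every $c$ and every objective with $u$-coefficients $\ge 0$, a linear objective over $\bigcap_r\operatorname{MIX}_r$ is optimized at an integral $z$. After eliminating each $u_r$ at its lower bound, the objective becomes a sum of Lovász extensions of the submodular functions $\max_{i\in S}q_r^i$ plus a linear term. That sum is the Lovász extension of a submodular function, so it is minimized at a vertex of $[0,1]^n$.
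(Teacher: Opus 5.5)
Your proposal is correct and follows essentially the same route as the paper: you stack the per-direction formulations $\widehat{\operatorname{MIX}}_r$ with disjoint blocks $y^r$, observe that the projection onto $(u,z)$ is $\bigcap_r\operatorname{MIX}_r(q)$, and invoke \eqref{eq:multi-mixing-intersection} from the literature. Your explicit size count and your remark that the coordinates $u_{r'}$ with $r'\ne r$ are left free only make precise what the paper leaves implicit.
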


\begin{proof}
By~\eqref{eq:multi-mixing-intersection}, it suffices to show 
\[
\bigcap_{r\in[k]}\operatorname{MIX}_r(q)
=
\proj_{u,z}
\left\{
(u,z,y):
(u,z,y^r)\in
\widehat{\operatorname{MIX}}_r
\quad \forall r\in[k]
\right\}.
\]
We have
\begin{align*}
&(u,z)\in
\proj_{u,z}
\left\{
(u,z,y):
(u,z,y^r)\in
\widehat{\operatorname{MIX}}_r
\quad \forall r\in[k]
\right\}\\
&\quad\Longleftrightarrow
\exists\,y^1,\ldots,y^k
\text{ such that }
(u,z,y^r)\in
\widehat{\operatorname{MIX}}_r
\quad\forall r\in[k]\\
&\quad\Longleftrightarrow
(u,z)\in\operatorname{MIX}_r
\quad\forall r\in[k]\\
&\quad\Longleftrightarrow
(u,z)\in
\bigcap_{r\in[k]}\operatorname{MIX}_r.
\end{align*}
\end{proof}
We use $\widehat{\operatorname{MMIX}}(q)$ to denote the extended formulation of the multi-mixing set using $\widehat{\operatorname{MIX}}_r(q)$.

\subsection{Polynomial-time optimization over multi-mixing sets}
\label{subsec:multi-mixing-algorithm}

Given $c\in\R^n$ and $\lambda\in\R_+^k$, consider the optimization problem
\begin{equation*}
\operatorname{MMOPT}(c,\lambda)
:=\max_{(u,z)\in\operatorname{MMIX}(q)}
\left\{c^\top z-\sum_{r=1}^k\lambda_r u_r\right\}.
\end{equation*}
For fixed binary $z$, the smallest feasible value of $u_r$ is
$\max_{i\in S(z)}q_r^i$, with the maximum over the empty set equal to zero.
Thus, because $\lambda\ge0$,
\begin{equation*}
\operatorname{MMOPT}(c,\lambda)
=\max_{S\subseteq[n]}
\left\{c(S)-\sum_{r=1}^k\lambda_r\max_{i\in S}q_r^i\right\}.
\end{equation*}
We propose a polynomial-time algorithm for this problem using
maximum-weight closure. We first recall this classical graph problem.

\begin{definition}[Maximum-weight closure of a graph]\label{def:closure}
Let $G=(V,A)$ be a directed graph. A set $X\subseteq V$ is a
\emph{closure} of $G$ if $u\in X,\ (u,v)\in A$ implies $v\in X$.
Equivalently, every out-neighbor of a node in $X$ also belongs to $X$.
Moreover, if the nodes are associated with node weights
$\omega\in\R^V$, then the \emph{maximum-weight closure problem} is
$\max\{\sum_{v\in X}\omega_v:X\subseteq V\text{ is a closure of }G\}$.
\end{definition}

The maximum-weight closure problem is polynomially solvable through the
classical reduction to a minimum $s$--$t$ cut; see, e.g.,
\cite{Picard1976,HochbaumQueyranne2003}. For completeness, we recall
the reduction. Let
\[
V^+:=\{v\in V:\omega_v>0\},\qquad
V^-:=\{v\in V:\omega_v<0\},\qquad
W^+:=\sum_{v\in V^+}\omega_v.
\]
Add a source $s$ and a sink $t$. For $v\in V^+$, add an arc $(s,v)$
of capacity $\omega_v$, and for $v\in V^-$, add an arc $(v,t)$ of
capacity $-\omega_v$. For every $(u,v)\in A$, add an arc $(u,v)$ of
capacity $M$, where $M>W^+$. A minimum $s$--$t$ cut never cuts an arc
of capacity $M$, since the cut placing every node of $V$ on the sink
side has capacity $W^+$. Consequently, the original nodes on the
source side form a closure $X$. The capacity of the corresponding cut
is $W^+-\omega(X)$. Thus maximizing the weight of a closure is
equivalent to minimizing the capacity of an $s$--$t$ cut.

We now construct a directed graph $G(c,\lambda)=(V,A)$ whose maximum
closure value is $\operatorname{MMOPT}(c,\lambda)$. Use the permutations
$\sigma_r$ introduced in Section~\ref{subsec:EF-MMS}.
For each $i\in[n]$, create an \emph{item node} $v_i\in V$ with weight
$\omega(v_i):=c_i$.
For every $r\in[k]$ and $\ell\in[n]$, create an \emph{expansion node}
$e_{r,\ell}\in V$ of weight
\begin{equation*}\label{eq:multi-mixing-expansion-weights}
\omega(e_{r,\ell}):=
\begin{cases}
-\lambda_r q_r^{\sigma_r(1)},&\ell=1,\\
-\lambda_r\bigl(q_r^{\sigma_r(\ell)}-q_r^{\sigma_r(\ell-1)}\bigr),
&\ell=2,\ldots,n.
\end{cases}
\end{equation*}
For $\ell=2,\ldots,n$, add the arc
$(e_{r,\ell},e_{r,\ell-1})\in A$.
Finally, for $i=\sigma_r(\ell)$ and every $r\in[k]$, add the arc
$(v_i,e_{r,\ell})\in A$.
Thus, selecting an item node forces all expansion levels needed by
that item in every mixing set. The graph is acyclic.

\begin{figure}[t]
\centering
\begin{tikzpicture}[
    >=Stealth,
    item/.style={
        draw, circle, minimum size=7mm,
        inner sep=0pt, font=\small
    },
    exp/.style={
        draw, rectangle, minimum width=10mm,
        minimum height=6mm, inner sep=1pt,
        font=\small
    },
    lab/.style={font=\scriptsize},
    arc/.style={->, thin}
]

\node[font=\small\bfseries] at (-3.5,2.7)
    {expansion nodes};
\node[font=\small\bfseries] at (0,2.7)
    {item nodes};
\node[font=\small\bfseries] at (3.5,2.7)
    {expansion nodes};

\node[lab] at (-3.5,2.2)
    {$r=2,\ \sigma_2=(1,3,2)$};
\node[lab] at (3.5,2.2)
    {$r=1,\ \sigma_1=(2,1,3)$};

\node[item] (v1) at (0,1.5) {$v_1$};
\node[item] (v2) at (0,0) {$v_2$};
\node[item] (v3) at (0,-1.5) {$v_3$};

\node[lab] at (0.5,1.8) {$c_1$};
\node[lab] at (0.5,0.3) {$c_2$};
\node[lab] at (0.5,-1.2) {$c_3$};

\node[exp] (e23) at (-3.5,1.5) {$e_{2,3}$};
\node[exp] (e22) at (-3.5,0) {$e_{2,2}$};
\node[exp] (e21) at (-3.5,-1.5) {$e_{2,1}$};

\node[lab, anchor=east] at (-4.25,1.5) {$-2$};
\node[lab, anchor=east] at (-4.25,0) {$-1$};
\node[lab, anchor=east] at (-4.25,-1.5) {$-1$};

\node[exp] (e13) at (3.5,1.5) {$e_{1,3}$};
\node[exp] (e12) at (3.5,0) {$e_{1,2}$};
\node[exp] (e11) at (3.5,-1.5) {$e_{1,1}$};

\node[lab] at (4.35,1.5) {$-2$};
\node[lab] at (4.35,0) {$-1$};
\node[lab] at (4.35,-1.5) {$-1$};

\draw[arc] (e23.south) -- (e22.north);
\draw[arc] (e22.south) -- (e21.north);

\draw[arc] (e13.south) -- (e12.north);
\draw[arc] (e12.south) -- (e11.north);

\draw[arc] (v1.west) -- (e21.east);
\draw[arc] (v2.west) -- (e23.east);
\draw[arc] (v3.west) -- (e22.east);

\draw[arc] (v1.east) -- (e12.west);
\draw[arc] (v2.east) -- (e11.west);
\draw[arc] (v3.east) -- (e13.west);

\end{tikzpicture}
\caption{Graph-closure reduction for $k=2$ and $n=3$,
with $q_1=(2,1,4)$, $q_2=(1,4,2)$,
and $\lambda_1=\lambda_2=1$.
Each node is labeled with its weight.}
\label{fig:mmix-closure}
\end{figure}

\begin{lemma}\label{lem:multi-mixing-closure}
For every $c\in\R^n$ and $\lambda\in\R_+^k$, the maximum weight of a
closure in $G(c,\lambda)$ is $\operatorname{MMOPT}(c,\lambda)$.
\end{lemma}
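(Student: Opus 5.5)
We prove the two inequalities $\max\{\omega(X):X\text{ closure}\}\ge\operatorname{MMOPT}(c,\lambda)$ and $\le$ separately, using the reformulation
\[
\operatorname{MMOPT}(c,\lambda)=\max_{S\subseteq[n]}\Bigl\{c(S)-\sum_{r=1}^k\lambda_r\max_{i\in S}q_r^i\Bigr\}
\]
derived above.

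The key observation is the structure of closures restricted to a single mixing block $r$. Because of the chain arcs $(e_{r,\ell},e_{r,\ell-1})$, the set $X\cap\{e_{r,1},\dots,e_{r,n}\}$ is a prefix $\{e_{r,1},\dots,e_{r,m_r}\}$ for some $m_r\in\{0,\dots,n\}$. By telescoping, its weight is $-\lambda_r\eta_{r,m_r}=-\lambda_r q_r^{\sigma_r(m_r)}$, with the value $0$ when $m_r=0$. Closedness under the item arcs $(v_i,e_{r,\sigma_r^{-1}(i)})$ says exactly that $m_r\ge\sigma_r^{-1}(i)$ for every item node $v_i\in X$.

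\textbf{Lower bound.} Given $S\subseteq[n]$, let $X$ consist of $\{v_i:i\in S\}$ together with, for each $r$, the prefix of length $m_r:=\max_{i\in S}\sigma_r^{-1}(i)$ (with $m_r:=0$ if $S=\varnothing$). This $X$ is a closure. Since $\sigma_r$ sorts the values $q_r$ nondecreasingly, $\eta_{r,m_r}=\max_{i\in S}q_r^i$. Hence $\omega(X)=c(S)-\sum_r\lambda_r\max_{i\in S}q_r^i$.

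\textbf{Upper bound.} Take any closure $X$ and let $S:=\{i:v_i\in X\}$. By the observation, $m_r\ge\max_{i\in S}\sigma_r^{-1}(i)$. Because $\eta_{r,\cdot}$ is nondecreasing and $\lambda_r\ge0$, we get $\lambda_r\eta_{r,m_r}\ge\lambda_r\max_{i\in S}q_r^i$. Therefore $\omega(X)\le c(S)-\sum_r\lambda_r\max_{i\in S}q_r^i\le\operatorname{MMOPT}(c,\lambda)$.

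The only delicate point is handling ties in the $q_r$ values and the nonnegativity of the $q$'s. Ties are harmless: $\eta_{r,m}$ equals $\max_{i\in S}q_r^i$ whenever $m$ is the largest position of an element of $S$ in the order $\sigma_r$, regardless of how ties are broken. Nonnegativity of $q$ makes $\eta_{r,0}=0\le\eta_{r,1}$, so the prefix weights are monotone. This monotonicity is the step to state carefully.
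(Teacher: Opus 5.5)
Your proof is correct and follows essentially the same route as the paper. For the lower bound you build the prefix closure $X(T)$. For the upper bound you observe that the item and chain arcs force the expansion prefix up to the largest position, and that any further expansion nodes only add nonpositive weight; your explicit prefix structure with monotonicity of $\eta_{r,\cdot}$ is a slightly more careful statement of the paper's ``additional expansion nodes have nonpositive weight'' step.
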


\begin{proof}
Let $X\subseteq V$ be a closure, and define its selected item set
by $T(X):=\{i\in[n]:v_i\in X\}$. Fix $r\in[k]$. If $v_i\in X$
and $i=\sigma_r(\ell)$, then the arc $(v_i,e_{r,\ell})$,
together with the chain
\[
e_{r,\ell}\to e_{r,\ell-1}\to\cdots\to e_{r,1},
\]
forces $e_{r,1},\ldots,e_{r,\ell}$ to belong to $X$. Hence the expansion
nodes forced by $T(X)$ in direction $r$ have total weight
\[
-\lambda_r q_r^{\sigma_r(1)}
-\lambda_r\sum_{h=2}^{\ell}
\bigl(q_r^{\sigma_r(h)}-q_r^{\sigma_r(h-1)}\bigr)
=-\lambda_r q_r^{\sigma_r(\ell)}.
\]
Taking $\ell$ to be the largest position of an item in $T(X)$ gives
$-\lambda_r\max_{i\in T(X)}q_r^i$; if $T(X)=\emptyset$, no expansion
nodes are forced. Any additional expansion nodes have nonpositive
weight. Therefore
\[
\omega(X)\le c(T(X))-
\sum_{r=1}^k\lambda_r\max_{i\in T(X)}q_r^i.
\]
This is at most
$\operatorname{MMOPT}(c,\lambda)$ by definition. Taking the maximum over closures
gives one inequality.

For the reverse inequality, fix any $T\subseteq[n]$. We construct a
closure as follows. Select the item nodes $v_i$, $i\in T$. For each
$r\in[k]$, select precisely the prefix of expansion nodes up to the
largest position of an item in $T$, selecting none if $T=\emptyset$.
The resulting node set, denoted $X(T)$, is a closure by construction,
and its weight is
\[
\omega(X(T))=c(T)-\sum_{r=1}^k\lambda_r\max_{i\in T}q_r^i.
\]
Maximizing over $T$ gives the reverse inequality. Hence the maximum
closure weight is exactly $\operatorname{MMOPT}(c,\lambda)$.
\end{proof}

\begin{theorem}[Polynomial-time optimization over multi-mixing sets]
\label{thm:multi-mixing-polytime}
The optimization problem $\operatorname{MMOPT}(c,\lambda)$ can be
solved by one maximum-weight closure computation on a directed acyclic
graph with $O(kn)$ nodes and arcs. Equivalently, it can be solved by
one minimum $s$--$t$ cut computation on a graph of the same order in $O(k^3n^3)$ time plus an overhead of $O(kn\log n)$ time for constructing the graph. 
\end{theorem}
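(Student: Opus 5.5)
The plan is to combine Lemma~\ref{lem:multi-mixing-closure} with the classical closure-to-cut reduction recalled before Definition~\ref{def:closure}, and then count sizes and running times.

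\emph{Size and acyclicity of $G(c,\lambda)$.} The graph has $n$ item nodes and $kn$ expansion nodes, so $|V|=n+kn=O(kn)$. For each $r$ there are $n-1$ chain arcs $(e_{r,\ell},e_{r,\ell-1})$. There are also $n$ arcs from item nodes into block $r$, one for each $i=\sigma_r^{-1}$-position. Hence $|A|=k(n-1)+kn=O(kn)$. The graph is acyclic because item nodes have no incoming arcs and every arc among expansion nodes strictly decreases the level index~$\ell$ within a fixed block~$r$. By Lemma~\ref{lem:multi-mixing-closure}, a maximum-weight closure of this DAG has weight exactly $\operatorname{MMOPT}(c,\lambda)$. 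An optimal $S$ is then recovered as $T(X)$ for an optimal closure $X$, since the proof of that lemma shows $\omega(X)\le c(T(X))-\sum_r\lambda_r\max_{i\in T(X)}q_r^i\le\operatorname{MMOPT}$, with equality at the optimum.

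\emph{Reduction to min cut.} We apply the reduction exactly as recalled: add $s$, $t$, source arcs for $V^+$, sink arcs for $V^-$, and capacity-$M$ copies of the arcs in $A$, with $M>W^+$. The resulting network has $|V|+2=O(kn)$ nodes and at most $|A|+|V|=O(kn)$ arcs. The argument already recorded in the paper shows that a minimum cut avoids the $M$-arcs, so its source side minus $s$ is a closure $X$ with cut capacity $W^+-\omega(X)$. Thus $\operatorname{MMOPT}(c,\lambda)=W^+-(\text{min cut value})$.

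\emph{Running time.} Constructing the graph requires the $k$ sorting permutations $\sigma_r$, which take $O(kn\log n)$ in total. Given these, the weights $\omega(e_{r,\ell})$ (consecutive differences) and all arcs are produced in $O(kn)$. For the cut computation, we invoke a standard strongly polynomial max-flow algorithm running in $O(|V|^3)$ time, such as the FIFO or highest-label push--relabel method, or Malhotra--Kumar--Maheshwari. With $|V|=O(kn)$ this gives $O(k^3n^3)$; the arc count $O(kn)\le O(|V|^2)$ does not worsen the bound. Alternatively, Orlin's $O(|V||A|)$ algorithm gives the sharper $O(k^2n^2)$, which is also within the claimed bound.

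The only point needing care is choosing a max-flow bound that matches the stated $O(k^3n^3)$ independently of the capacities. This is why I would cite an $O(|V|^3)$ strongly polynomial method rather than a capacity-scaling one. Everything else follows directly from Lemma~\ref{lem:multi-mixing-closure} and the recalled reduction.
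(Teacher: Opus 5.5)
Your proposal is correct and follows essentially the same route as the paper. You invoke Lemma~\ref{lem:multi-mixing-closure}, count the $n+kn$ nodes and $kn+k(n-1)$ arcs of $G(c,\lambda)$, and apply the recalled closure-to-cut reduction; your accounting of the $O(kn\log n)$ sorting overhead and the $O(|V|^3)$ strongly polynomial max-flow bound just fills in running-time details the paper's proof leaves implicit.
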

\begin{proof}
By Lemma~\ref{lem:multi-mixing-closure}, the optimal value equals the
maximum closure weight in $G(c,\lambda)$. The graph has $n$ item nodes,
$kn$ expansion nodes, $kn$ item-to-expansion arcs, and $k(n-1)$
chain arcs. Thus it has $O(kn)$ nodes and arcs. The result follows
from the maximum-weight closure/minimum-cut reduction.
\end{proof}

\section{Boxing Problem: Complexity and Extended Formulation}\label{sec:ef}

Fixing an anchor point allows us to model the Boxing Set using a multi-mixing set. We then combine the anchored sets through disjunctive programming to obtain a compact extended formulation for $\conv(B)$.

\subsection{The anchored boxing sets}\label{subsec:anchored-boxing}

Fix an anchor $a\in[n]$. Consider a subset $S\subseteq [n]$ with $a\in S$. Then the width of $S$ is just the total length that the box needs to expand from the singleton point $a$ to include every point in $S$. Thus, its width can be written as:
\[\widthf (S) = \sum_{t = 1}^d \left(
         \max_{\substack{i\in S, \ p^i_t\ge p^a_t}} (p^i_t - p^a_t)
         +\max_{\substack{i\in S, \ p^i_t< p^a_t}} (p^a_t - p^i_t)
        \right)\]
The maximum over an empty index set is taken to be zero.
This decomposition therefore motivates us to define the positive and negative expansions of a point $i$ relative to an anchor $a$:
\begin{equation*}
q_{t,+}^{ai}:=\max\{0,p_t^i-p_t^a\},\qquad
q_{t,-}^{ai}:=\max\{0,p_t^a-p_t^i\},\qquad i\in[n],\ t\in[d].
\end{equation*}
Let $\mathcal H:=\{(t,+),(t,-):t\in[d]\}$ be the set of $2d$ expansion directions, and write $q_h^{ai}$ for the corresponding expansion. For each anchor $a$, define
\[
q^a:=(q_h^{ai})_{h\in\mathcal H,\,i\in[n]}
\in\R_+^{\mathcal H\times[n]},
\qquad q_h^{aa}=0\quad\forall h\in\mathcal H.
\]
Thus, $q^a$ is the coefficient array of the multi-mixing set $\operatorname{MMIX}(q^a)$, with $k=2d$. For every $S\subseteq[n]$ containing $a$, the width is the sum of the expansions from $a$: $\widthf(S)=\sum_{h\in\mathcal H}\max_{i\in S}q_h^{ai}.$

For each $a\in[n]$, define the anchored slice $B_a:=B\cap\{(w,z):z_a=1\}$. The width identity gives
\begin{equation}\label{eq:anchored-multi-mixing-model}
B_a=\left\{(w,z)\in\R\times\{0,1\}^n:\
\begin{array}{l}
\exists\,u\in\R^{\mathcal H}\text{ such that }
(u,z)\in\operatorname{MMIX}(q^a),\\
z_a=1,\quad w\ge\sum_{h\in\mathcal H}u_h
\end{array}
\right\}.
\end{equation}
Using the extended formulation of Section~\ref{subsec:EF-MMS}, define $\widehat B_a
:=\left\{(w,z,u,y):
\begin{aligned}
&(u,z,y)\in\widehat{\operatorname{MMIX}}(q^a),\\
&z_a=1,\qquad w\ge\sum_{h\in\mathcal H}u_h
\end{aligned}
\right\}.$

\begin{lemma}\label{lem:anchored-mix-proj}
For every $a\in[n]$,
\[
\begin{aligned}
\conv(B_a)
&=\left\{(w,z):\begin{array}{l}
\exists\,u\in\R^{\mathcal H}\text{ such that }
(u,z)\in\conv(\operatorname{MMIX}(q^a)),\\
z_a=1,\quad w\ge\sum_{h\in\mathcal H}u_h
\end{array}\right\}\\
&=\proj_{w,z}(\widehat B_a).
\end{aligned}
\]
Moreover, $\widehat B_a$ has $O(dn)$ variables and inequalities.
\end{lemma}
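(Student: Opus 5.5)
We prove the two equalities separately. Let $P_a$ denote the middle set. The second equality, $P_a=\proj_{w,z}(\widehat B_a)$, is then immediate. By Theorem~\ref{thm:mix-intersection-EF} and \eqref{eq:multi-mixing-intersection}, $\conv(\operatorname{MMIX}(q^a))=\proj_{u,z}\widehat{\operatorname{MMIX}}(q^a)$. The additional constraints $z_a=1$ and $w\ge\sum_h u_h$ involve only $(w,u,z)$, not $y$. Projection therefore commutes with adding them: $(w,z)\in P_a$ iff there exist $u$ and $y$ with $(w,z,u,y)\in\widehat B_a$. For the size count, take $k=|\mathcal H|=2d$ in Theorem~\ref{thm:mix-intersection-EF}. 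This gives $O(dn)$ variables and inequalities, and we add only $O(d)$ variables $u,w$ and two constraints.

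The first equality is the main step. For the inclusion $\conv(B_a)\subseteq P_a$, note that $P_a$ is convex, being a projection of a convex set. It then suffices to show $B_a\subseteq P_a$, which follows from \eqref{eq:anchored-multi-mixing-model} and $\operatorname{MMIX}(q^a)\subseteq\conv(\operatorname{MMIX}(q^a))$.

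For the reverse inclusion, take $(w,z)\in P_a$ with witness $u$. Write $(u,z)=\sum_j\mu_j(u^j,z^j)$ as a convex combination of points of $\operatorname{MMIX}(q^a)$; the recession directions $(e_h,0)$ can be absorbed by increasing the $u^j$, so this costs no generality. Since $z_a=1$ and each $z^j_a\in\{0,1\}$, every $z^j$ with $\mu_j>0$ has $z^j_a=1$. Set $w^j:=\sum_h u^j_h+\delta$, where $\delta:=w-\sum_h u_h\ge0$. Then $\sum_j\mu_j w^j=w$, each $(w^j,z^j)$ satisfies the description \eqref{eq:anchored-multi-mixing-model}, and hence $(w^j,z^j)\in B_a$. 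Therefore $(w,z)\in\conv(B_a)$.

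The only delicate point is this face argument: $z_a=1$ is a face of $[0,1]$, so $z_a=1$ forces every component of the convex combination onto the slice. Handling unboundedness correctly, either via recession cones or by taking closed convex hulls, is routine once it is noted that $\conv(\operatorname{MMIX})$ is a polyhedron whose recession cone is $\R^{\mathcal H}_+\times\{0\}$.
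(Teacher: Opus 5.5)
Your proof is correct and follows essentially the same route as the paper. You use Theorem~\ref{thm:mix-intersection-EF} for the extended formulation of $\conv(\operatorname{MMIX}(q^a))$, the face property of $z_a=1$ to keep every point of a convex combination in the anchored slice, and a nonnegative slack in $w$; your explicit decomposition with the distributed slack $\delta$ just spells out what the paper compresses into one line.
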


\begin{proof}
By Theorem~\ref{thm:mix-intersection-EF},
$\widehat{\operatorname{MMIX}}(q^a)$ is an extended formulation of
$\conv(\operatorname{MMIX}(q^a))$.
Since $z_a=1$ defines a face of this convex hull, fixing the anchor
preserves the convexification.  Moreover, introducing the continuous variable \(w\) through the linear constraint 
$w\ge\sum_{h\in\mathcal H}u_h$ only adds a free nonnegative slack in $w$ which preserves the convexity. Thus, by~\eqref{eq:anchored-multi-mixing-model},
$\proj_{w,z}(\widehat B_a)=\conv(B_a)$.
The size bound follows from Theorem~\ref{thm:mix-intersection-EF}
with $k=2d$.
\end{proof}

Let $B_0:=\{(w,0)\in\R\times\{0,1\}^n:w\ge0\}$ denote the empty-support slice. The anchored slices give the following decomposition.

\begin{theorem}
\label{thm:boxing-slices}\label{lem:slice}
The Boxing Set satisfies $B=B_0\cup\bigcup_{a=1}^n B_a,$ and $\conv(B)=\conv\left(B_0\cup\bigcup_{a=1}^n B_a\right)$.
Consequently, the Boxing Problem satisfies $\operatorname{BP}(c)=\max\left\{0,\max_{a\in[n]}\max_{(w,z)\in B_a}\{c^\top z-w\}\right\}.$
\end{theorem}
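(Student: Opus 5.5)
The plan is to prove the three claims in order: the set decomposition, then the convex-hull identity, then the formula for $\operatorname{BP}(c)$. Each step rests directly on the one before it.

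\textbf{Set decomposition.} Take $(w,z)\in B$. If $z=0$, then $S(z)=\varnothing$ and $\widthf(\varnothing)=0$. So $w\ge 0$ and $(w,z)\in B_0$. Otherwise $S(z)$ contains some index $a$, which gives $z_a=1$ and hence $(w,z)\in B\cap\{z_a=1\}=B_a$. For the reverse inclusion, each $B_a\subseteq B$ holds by definition. Also $B_0\subseteq B$, because $w\ge0=\widthf(\varnothing)$. Together these give $B=B_0\cup\bigcup_{a}B_a$.

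\textbf{Convex-hull identity.} Once the set equality holds, the identity $\conv(B)=\conv\bigl(B_0\cup\bigcup_a B_a\bigr)$ follows immediately. I would also note that the slices overlap, since a point with $|S(z)|\ge 2$ lies in several $B_a$. This overlap is harmless because we only take a union. The role of this identity is to justify the disjunctive-programming step later: by Lemma~\ref{lem:anchored-mix-proj}, each $\conv(B_a)$ has a compact extended formulation, and $\conv(B_0)=B_0\cup\{\text{its ray}\}$ is simply the half-line $\{(w,0):w\ge0\}$. All the pieces share the recession cone $\{(w,0):w\ge0\}$, so Balas' disjunctive formulation yields a closed convex hull. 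For the statement itself, however, only the set equality is needed.

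\textbf{Formula for $\operatorname{BP}(c)$.} For any $S\subseteq[n]$, the point $(\widthf(S),\chi^S)$ lies in $B$. Conversely, for any $(w,z)\in B$ we have $c^\top z-w\le c(S(z))-\widthf(S(z))$. It follows that $\operatorname{BP}(c)=\sup_{(w,z)\in B}\{c^\top z-w\}$, and the supremum is attained. Splitting this supremum over the union from the first step gives a maximum over the pieces. On $B_0$ the objective is $-w\le 0$, attained at $w=0$, so that piece contributes $0$. Each $B_a$ contributes $\max_{(w,z)\in B_a}\{c^\top z-w\}$. This maximum is attained because $z$ ranges over finitely many binary vectors and $w$ is bounded below by $\widthf(S(z))$ with coefficient $-1$.

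There is no real obstacle in this proof. The only points that need care are that $\widthf(\varnothing)=0$ places the empty selection in $B_0$, and that every maximum over the unbounded sets $B_a$ is actually attained.
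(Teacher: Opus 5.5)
Your proof is correct and follows the paper's argument. You split $B$ according to whether the support is empty, which places the point in $B_0$, or contains some anchor $a$, which places it in $B_a$; you then take convex hulls of the equal sets and obtain $\operatorname{BP}(c)$ by maximizing $c^\top z-w$ piece by piece over the union. Your care about attainment of each maximum and the value $0$ contributed by $B_0$ just fills in details the paper leaves implicit.
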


\begin{proof}
Every nonempty support contains an anchor $a\in[n]$, while the empty support belongs to $B_0$. The first equality follows, and taking convex hulls gives the second.
\end{proof}
Since \(q^{aa}_h=0\) for all \(h\in H\) and \(c_a\geq 0\), adding the anchor \(a\) to any selected subset does not increase the expansion cost or decrease the objective value. Hence, the constraint \(z_a=1\) does not affect the optimal value of \(\operatorname{MMOPT}(c,\mathbf{1})\). Therefore, each anchored Boxing Problem is equivalent in optimal value to \(\operatorname{MMOPT}(c,\mathbf{1})\), yielding the following corollary.

\begin{corollary}
\label{cor:boxing-polytime}\label{thm:boxing-polytime}
The Boxing Problem can be solved in $O(d^3n^4)$ time.
\end{corollary}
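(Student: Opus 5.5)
The plan is to combine the decomposition in Theorem~\ref{thm:boxing-slices} with the polynomial-time multi-mixing algorithm of Theorem~\ref{thm:multi-mixing-polytime}. Since $\operatorname{BP}(c)=\max\{0,\max_{a\in[n]}\max_{(w,z)\in B_a}\{c^\top z-w\}\}$, it suffices to solve $n$ anchored problems, one per anchor $a\in[n]$, and take the best value together with $0$ (the empty set).

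First, for a fixed anchor $a$, I will show that $\max_{(w,z)\in B_a}\{c^\top z-w\}$ equals $\operatorname{MMOPT}(c,\mathbf 1)$ for the coefficient array $q^a$ with $k=2d$. By the representation \eqref{eq:anchored-multi-mixing-model}, and because at optimum $w=\sum_h u_h$ and $u_h=\max_{i\in S}q_h^{ai}$, the anchored value is $\max_{S\ni a}\{c(S)-\sum_{h\in\mathcal H}\max_{i\in S}q_h^{ai}\}$. The argument stated just before the corollary then removes the restriction $a\in S$. Since $q_h^{aa}=0$ and $c_a\ge0$, for any $S$ the set $S\cup\{a\}$ has the same maxima and at least as large a reward. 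Also, for $S\ni a$ the multi-mixing objective equals $c(S)-\widthf(S)$. Hence the unrestricted optimum of $\operatorname{MMOPT}(c,\mathbf 1)$ over $q^a$ is attained by a set containing $a$ and coincides with the anchored value. This gives correctness.

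Second, I will count the running time. For each anchor, building $q^a$ takes $O(dn)$ time. Sorting the $2d$ coefficient vectors to get the permutations $\sigma_h$ and constructing $G(c,\mathbf 1)$ takes $O(dn\log n)$ time. Theorem~\ref{thm:multi-mixing-polytime} with $k=2d$ then solves the closure problem in $O((2d)^3n^3)=O(d^3n^3)$ time. Summing over $n$ anchors gives $O(d^3n^4)+O(dn^2\log n)=O(d^3n^4)$, and adding the comparison with $0$ costs nothing extra.

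The only subtle point is the equivalence in the first step, namely checking that dropping the constraint $z_a=1$ is harmless. This is where nonnegativity of $c$ and $q_h^{aa}=0$ are needed. I expect this to be routine, with the time bound following mechanically from the cited theorem.
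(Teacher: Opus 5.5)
Your proposal is correct and follows the paper's argument. You reduce to the $n$ anchored slices via Theorem~\ref{thm:boxing-slices}, and you use $q_h^{aa}=0$ and $c_a\ge0$ to drop $z_a=1$, so each anchored problem equals $\operatorname{MMOPT}(c,\mathbf 1)$ with $k=2d$; applying Theorem~\ref{thm:multi-mixing-polytime} once per anchor then gives $O(d^3n^4)$. Your accounting of the $O(dn\log n)$ per-anchor construction overhead, which the paper leaves implicit, is also correct and is dominated by the cut computations.
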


\subsection{Extended formulation for the Boxing Set}\label{subsec:boxing-EF}

We combine the $n+1$ slices in Theorem~\ref{thm:boxing-slices} using the standard Balas disjunctive formulation. Introduce multipliers $\lambda_a\ge0$, $a=0,\ldots,n$, satisfying $\sum_{a=0}^n\lambda_a=1$. For each $a\in[n]$, introduce a perspective-scaled copy $(w^a,z^a,u^a,y^a)$ of $\widehat B_a$ with multiplier $\lambda_a$. Include $w^0\ge0$ for the empty-support slice.
Let $\mathcal E$ denote the resulting Balas formulation in the variables $(w,z,\xi)$, where $\xi:=(\lambda,w^0,\{w^a,z^a,u^a,y^a\}_{a=1}^n)$
collects all auxiliary variables. Thus $\mathcal E\subseteq\R^{n+1}\times\R^{n+2+n(1+n+2d+2dn)}.$

\begin{corollary}
\label{cor:boxing-EF}\label{thm:boxing-EF}
The formulation $\mathcal E$ satisfies $\proj_{w,z}(\mathcal E)=\conv(B)$. Moreover, $\mathcal E$ has $O(dn^2)$ variables and inequalities.
\end{corollary}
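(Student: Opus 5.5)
The plan is to apply Balas' disjunctive programming theorem to the decomposition $\conv(B)=\conv\bigl(B_0\cup\bigcup_{a=1}^n B_a\bigr)$ from Theorem~\ref{thm:boxing-slices}, using Lemma~\ref{lem:anchored-mix-proj} for each slice. Each piece has a polyhedral description of its convex hull. For $\conv(B_0)$ this is $\{(w,0):w\ge0\}$. For $\conv(B_a)$ it is $\proj_{w,z}(\widehat B_a)$, where $\widehat B_a$ is a polyhedron with $O(dn)$ variables and constraints. Writing $\widehat B_a=\{x: A^a x\ge b^a\}$, the Balas formulation $\mathcal E$ consists of the following constraints: $A^a x^a\ge\lambda_a b^a$ for each $a\in[n]$; $w^0\ge0$ and $z^0=0$ (we can eliminate $z^0$); the aggregation identities $w=w^0+\sum_a w^a$ and $z=\sum_a z^a$; and $\lambda\ge0$ with $\sum_{a=0}^n\lambda_a=1$.

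The first key step is the standard inclusion $\conv(B)\subseteq\proj_{w,z}(\mathcal E)$. Given a point of $\conv(B)$, write it as a convex combination $\sum_a\lambda_a x_a$ with $x_a\in\conv(B_a)$, lift each $x_a$ to a point of $\widehat B_a$, and take $x^a=\lambda_a\hat x_a$. For the reverse inclusion, take $(w,z,\xi)\in\mathcal E$. For each $a$ with $\lambda_a>0$, the point $x^a/\lambda_a$ lies in $\widehat B_a$, so it projects into $\conv(B_a)$.

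The main obstacle is the components with $\lambda_a=0$, because Balas' theorem in general gives only the closure of the convex hull of the union. In that case $x^a$ lies in the recession cone $\{x:A^ax\ge0\}$. Here the constraints $0\le z_i\le 1$ and $0\le y_\ell\le 1$, as well as $z_a=1$, become $0\le z^a_i\le 0$, $0\le y^a\le 0$, and $z^a_a=0$ after homogenization. This forces $z^a=0$ and $y^a=0$. The $u$ constraints then give $u^a_h\ge0$, and $w^a\ge\sum_h u^a_h\ge0$. So the recession direction is $(w^a,0)$ with $w^a\ge0$. This is also a recession direction of every $\conv(B_b)$ and of $\conv(B_0)$, because $w$ is unbounded above in each slice. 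Adding $w^a$ to the $w$-component of any piece with positive multiplier therefore keeps the point in $\conv(B)$. This shows that $\proj_{w,z}(\mathcal E)\subseteq\conv(B)$ exactly, with no closure needed. Equivalently, every slice has the same recession cone $\{(w,0):w\ge0\}$, which is the standard sufficient condition for exactness of the Balas formulation.

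Finally, for the size count, each of the $n$ copies of $\widehat B_a$ has $1+n+2d+2dn$ variables: $w^a$, $z^a$, $u^a\in\R^{2d}$, and $y^a\in\R^{2d\times n}$. It also has $O(dn)$ inequalities, by Lemma~\ref{lem:anchored-mix-proj} with $k=2d$. Adding the $n+1$ multipliers, $w^0$, and the $n+2$ aggregation and normalization constraints gives $O(dn^2)$ variables and inequalities in total, which matches the dimension stated for $\mathcal E$.
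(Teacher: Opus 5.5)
Your proposal is correct and follows the paper's route: Balas' disjunctive formulation over the $n+1$ slices of Theorem~\ref{thm:boxing-slices}, with each $\conv(B_a)$ given by Lemma~\ref{lem:anchored-mix-proj}, and the same $O(dn)$-per-slice size count. Your handling of the components with $\lambda_a=0$ goes slightly beyond the paper, which does not spell this out. You show they contribute only the common recession direction $\{(w,0):w\ge0\}$, so the projection is exactly $\conv(B)$ and no closure is needed.
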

\begin{remark}
    K{\"u}{\c{c}}{\"u}kyavuz~\cite{Kucukyavuz2012} develops a disjunctive extended formulation by fixing possible values of the continuous mixing variable. Direct extension of this strategy to the Boxing Set by enumerating coordinate-wise box extrema may involve up to \(n^{2d}\) disjuncts. In contrast, our anchor-based decomposition requires only \(n+1\) slices and yields an extended formulation of size \(O(dn^2)\), which is polynomial in both \(n\) and \(d\).
\end{remark}

An important consequence of Corollary~\ref{cor:boxing-EF} is that optimization and separation over $\conv(B)$ can be carried out by solving a single polynomial-size linear program.

\begin{corollary}\label{cor:boxing-optimization}
For rational input data, linear optimization over $B$, equivalently over $\conv(B)$, can be performed in time polynomial in $n$, $d$, and the encoding length of the input data.
\end{corollary}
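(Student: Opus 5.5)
The plan is to derive Corollary~\ref{cor:boxing-optimization} from the compact extended formulation of Corollary~\ref{cor:boxing-EF}, together with polynomial-time linear programming (ellipsoid or interior-point methods) over rational data. Consider an objective $\max\{c^\top z-\alpha w\}$ over $B$, with $c\in\mathbb{Q}^n$ and $\alpha\in\mathbb{Q}$. Linear optimization over $B$ and over $\conv(B)$ have the same value, since a linear function attains the same supremum on a set and on its convex hull. Moreover, $\conv(B)$ is a polyhedron: it equals $\proj_{w,z}(\mathcal E)$, and a projection of a polyhedron is a polyhedron. Its vertices are therefore points of $B$, namely $(\widthf(S),\ind^S)$, and its recession cone is generated by the direction $(1,0)$.

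The key steps are as follows.
\begin{enumerate}
\item Handle unboundedness. If $\alpha<0$, the problem is unbounded, and this is detected trivially; the LP also reports it. If $\alpha\ge0$, the value is bounded.
\item Form the LP $\max\{c^\top z-\alpha w:(w,z,\xi)\in\mathcal E\}$. Because $\proj_{w,z}(\mathcal E)=\conv(B)$, its optimal value equals the optimum over $\conv(B)$.
\item Bound the size of the LP. By Corollary~\ref{cor:boxing-EF}, $\mathcal E$ has $O(dn^2)$ variables and constraints. Its coefficients are the differences $\Delta_{h,\ell}$ of sorted expansions $q^{ai}_h$, perspective-scaled by $\lambda_a$, together with $0/\pm1$ entries. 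Computing the $\sigma_h$ sorting and the differences takes $O(dn^2\log n)$ arithmetic operations, and every coefficient has encoding length polynomial in that of the $p^i$.
\item Solve the LP. Apply Khachiyan's ellipsoid method or an interior-point method to obtain an optimal basic solution in time polynomial in $n$, $d$, and the encoding length.
\end{enumerate}

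The remaining step, which I expect to be the only mild obstacle, is returning an optimal point of $B$ rather than merely of $\conv(B)$. A basic optimal solution of the extended LP need not project to a vertex of $\conv(B)$, because a projection of a vertex can lie in the interior of a face. I would first try to recover an integral point directly. From the optimal LP value $v^*$, fix the variables one at a time: for each $i$, impose $z_i=0$ (or $z_i=1$) as an additional face-defining equality of $\conv(B)$ and re-solve, keeping the fixing whenever the optimum stays $v^*$. Both restrictions define faces of $\conv(B)$, and their polyhedra are still described by $\mathcal E$ plus one equality. After $n$ rounds, $z$ is integral, and setting $w=\widthf(S(z))$ gives an optimal point of $B$.

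Alternatively, since $\alpha\ge0$, the objective can be rescaled to $\alpha=1$ when $\alpha>0$, and Corollary~\ref{cor:boxing-polytime} solves the problem combinatorially. When $\alpha=0$, the answer is simply $z_i=1$ if and only if $c_i>0$. This route gives a strongly polynomial alternative and also makes the rationality assumption needed only for the LP route.
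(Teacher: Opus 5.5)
Your proposal is correct and follows the paper's route: the paper obtains this corollary directly from the polynomial-size extended formulation $\mathcal E$ of Corollary~\ref{cor:boxing-EF} plus polynomial-time linear programming, and your unboundedness check and variable-fixing step for recovering an optimal point of $B$ (rather than only of $\conv(B)$) are valid details the paper leaves implicit. For your combinatorial alternative, note that Corollary~\ref{cor:boxing-polytime} is stated for nonnegative rewards, so first set $z_i=0$ whenever $c_i<0$; this loses nothing because the width is monotone and such a point only lowers $c(S)$.
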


\begin{corollary}\label{cor:boxing-separation}
Separation over $\conv(B)$ can be performed by solving a single linear program of polynomial size. Moreover, for a point $(\hat w,\hat z)$ with $\hat z\in[0,1]^n$, the maximum violation among the facets normalized with coefficient one on $w$, including $w\ge0$, is given by $\min\{w:(w,\hat z,\xi)\in\mathcal E\}-\hat w$.
\end{corollary}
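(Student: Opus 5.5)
The plan is to reduce the separation problem to minimizing $w$ over the fiber of the extended formulation $\mathcal E$ above $\hat z$, and then to identify that minimum with the largest right-hand side among the normalized facets. Both parts of the statement follow from Corollary~\ref{cor:boxing-EF}, which gives $\proj_{w,z}(\mathcal E)=\conv(B)$ with $O(dn^2)$ variables and inequalities.

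\textbf{First claim.} A point $(\hat w,\hat z)$ lies in $\conv(B)$ if and only if the linear system $\{(w,z,\xi)\in\mathcal E: w=\hat w,\ z=\hat z\}$ is feasible. This is a single LP of polynomial size. If it is infeasible, Farkas' lemma applied to this system, with the multipliers of the equality constraints $w=\hat w$ and $z=\hat z$ read off from the dual, gives a valid inequality for $\proj_{w,z}(\mathcal E)$ that $(\hat w,\hat z)$ violates. Such a certificate can be computed by one LP as well, for instance by solving the dual or the phase-one LP.

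\textbf{Second claim.} I will first describe $\conv(B)$ structurally.

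1. The recession cone of $\conv(B)$ is $\{(\mu,0):\mu\ge0\}$, since $B$ is unbounded only in the direction of increasing $w$.
2. Hence every facet other than the bounds $0\le z_i\le1$ has the form $\alpha w\ge \pi^\top z+\beta$ with $\alpha\ge0$.
3. If $\alpha=0$, the inequality is valid on $\{0,1\}^n$. Since every binary $z$ is feasible for some $w$, the $z$-projection of $\conv(B)$ is $[0,1]^n$, so this inequality is implied by the bounds.
4. The nontrivial facets may therefore be scaled to $w\ge \pi^\top z+\beta$.

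Now set $f(\hat z):=\min\{w:(w,\hat z,\xi)\in\mathcal E\}$. For $\hat z\in[0,1]^n$ this minimum is finite: it is feasible by step 3, and it is bounded below because $w\ge0$ is valid. Because the fiber over $\hat z$ is exactly $\{w:(w,\hat z)\in\conv(B)\}$, we have
\[
f(\hat z)=\max_{F}\{\pi^\top\hat z+\beta\},
\]
where $F$ ranges over the normalized facets. This is LP duality, or equivalently the fact that a polyhedron is the intersection of its facet-defining halfspaces together with the box constraints, which hold at $\hat z$. The maximum violation $\max_F\{\pi^\top\hat z+\beta-\hat w\}$ therefore equals $f(\hat z)-\hat w$. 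Moreover, an optimal dual solution of the LP defining $f(\hat z)$ returns a maximizing inequality.

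\textbf{Main obstacle.} The subtle step is the identity between $f(\hat z)$ and the maximum over normalized facets. It needs the lower envelope $\hat z\mapsto f(\hat z)$ to be a convex piecewise-linear function on $[0,1]^n$ whose pieces are exactly those facets. I would argue this through the epigraph: $\conv(B)\cap\{z\in[0,1]^n\}$ is the epigraph of $f$, and $f$ is the pointwise maximum of its supporting affine minorants, which are the facets. The facet $w\ge0$ must be included in this family, because it is one of the normalized facets.
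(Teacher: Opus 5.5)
Your proposal is correct and follows the route the paper intends. The paper gives no separate proof and treats the corollary as an immediate consequence of Corollary~\ref{cor:boxing-EF}: minimize $w$ over the fiber of $\mathcal E$ above $\hat z$, then compare with the description of $\conv(B)$ by $0\le z\le 1$, $w\ge 0$, and the normalized $w$-facets (Propositions~\ref{prop:elementary} and~\ref{prop:normalization}). Two small points: your ``main obstacle'' is not really one, since the facet description of the full-dimensional polyhedron $\conv(B)$ already gives the max formula directly; and an optimal dual solution yields a valid normalized inequality attaining the maximum violation, but not necessarily a facet.
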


%% file: properties.tex
\section{Basic Polyhedral Properties}\label{sec:basic}
Although the extended formulation gives a compact representation in a
higher-dimensional space, the facets of $\conv(B)$ in the original
$(w,z)$-space have a nontrivial structure which we will explore for the rest of the paper. This section presents several
elementary properties.
\begin{proposition}\label{prop:elementary}The following are basic properties of $\conv(B)$.
    \begin{enumerate}
        \item The convex hull $\conv(B)$ is full-dimensional in $\R^{n+1}$, and
	$\rec(\conv(B))=\{(w,0)\in\R\times\R^n:w\ge0\}.$
        \item The projection of $\conv(B)$ onto the $z$-space is $[0,1]^n$.
	Consequently, the inequalities $0\le z_i \le 1$ for all $i\in [n]$
	are facet-defining for $\conv(B)$, and every facet-defining inequality
	that does not involve $w$ is a multiple of one of these coordinate
	bounds.
    \item $w\ge0$ is facet-defining for $\conv(B)$.
    \end{enumerate}
\end{proposition}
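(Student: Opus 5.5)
We prove the three items directly from the definition of $B$, exhibiting explicit points and rays and relying only on the fact that $\widthf$ is finite, nonnegative, and vanishes on singletons and on the empty set.

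\textbf{Full dimension and recession cone.} The $n+2$ points $(0,0)$, $(0,\ind^{\{i\}})$ for $i\in[n]$, and $(1,0)$ all lie in $B$, because $\widthf(\varnothing)=\widthf(\{i\})=0$. They are affinely independent: subtracting $(0,0)$ leaves the unit vectors of $\R^{n+1}$. Hence $\conv(B)$ is full-dimensional.

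For the recession cone, we write $\conv(B)=\conv(V)+\operatorname{cone}\{(1,0)\}$, where $V:=\{(\widthf(S),\ind^S):S\subseteq[n]\}$ is a finite set. This identity holds because every point of $B$ has the form $(\widthf(S(z))+s,z)$ with $s\ge0$. By Minkowski--Weyl, the recession cone is therefore exactly $\{(w,0):w\ge0\}$.

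\textbf{Projection and the bounds $0\le z_i\le1$.} The $z$-projection of $B$ is $\{0,1\}^n$, so the projection of $\conv(B)$ is $\conv(\{0,1\}^n)=[0,1]^n$. Take any inequality $\alpha z\le\beta$ valid for $\conv(B)$, with no $w$ term. Its validity is equivalent to validity on $[0,1]^n$, and its face in $\conv(B)$ is the full preimage of its face in $[0,1]^n$.

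Such a face has dimension $n$ only if the face of the cube has dimension $n-1$. Indeed, the preimage of a face $F$ of the cube has dimension $\dim F+1$: over every $z\in F\cap\{0,1\}^n$ the fiber is a ray in $w$, so $\dim$ is at most $\dim F+1$, and for $\dim F=n-1$ equality holds. Equality follows from the $n$ affinely independent vertices of $F$, each lifted to $w=\widthf(S)$, together with one additional point shifted in $w$.

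Consequently, facets not involving $w$ correspond exactly to facets of $[0,1]^n$, namely the bounds $z_i\ge0$ and $z_i\le1$ up to positive scaling. For an explicit certificate in the case of $z_i\le1$: the face contains $(\widthf(\{i,j\}),\ind^{\{i,j\}})$ for $j\ne i$, together with $(0,\ind^{\{i\}})$ and $(1,\ind^{\{i\}})$. That is $n+1$ affinely independent points. The same construction works for $z_i\ge0$ using the points $(0,0)$, $(1,0)$, and $(0,\ind^{\{j\}})$ for $j\ne i$.

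\textbf{$w\ge0$.} This inequality is valid because $\widthf\ge0$. It is tight at $(0,0)$ and at $(0,\ind^{\{i\}})$ for $i\in[n]$, which are $n+1$ affinely independent points, so it defines a facet.

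The only mildly delicate step is the claim that every $w$-free facet is a coordinate bound. The dimension-counting argument above settles it: a $w$-free valid inequality defines a facet of $\conv(B)$ if and only if it defines a facet of $[0,1]^n$.
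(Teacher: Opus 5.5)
Your proof is correct. The paper states Proposition~\ref{prop:elementary} without proof, treating it as elementary, so there is no argument of the paper's to compare against. Your ingredients (the empty support, the singletons, the ray $(1,0)$, the pairs $\{i,j\}$ for the face $z_i=1$, and the decomposition $\conv(B)=\conv(V)+\operatorname{cone}\{(1,0)\}$) are the same ones the paper uses implicitly later, e.g.\ in the proof of Proposition~\ref{prop:normalization}.

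The only step worth tightening is the dimension count for $w$-free faces. The bound $\dim\le\dim F+1$ follows most directly from the containment of the face in $\R\times\aff(F)$. Arguing through the fibers over binary points would additionally require that the face is generated by those fibers. Your lifting construction in fact gives $\dim=\dim F+1$ for every nonempty face $F$ of the cube. That also settles the case $F=[0,1]^n$, which your ``only if'' direction otherwise leaves implicit.
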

\begin{proposition}\label{prop:normalization}
	Every nontrivial $w$-facet can be written uniquely up to positive
	scaling in the form $w-\alpha^\top z\ge-\beta,$
	where $\alpha\ge0,\beta>0$ and, $\forall i\in[n]$,  $\alpha_i\le\beta$.
\end{proposition}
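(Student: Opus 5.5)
The plan is to start from an arbitrary facet-defining inequality $\gamma w+\pi^\top z\ge\pi_0$ of $\conv(B)$ that involves $w$, i.e.\ $\gamma\neq0$, and that is not $w\ge0$. We will call such an inequality a \emph{nontrivial $w$-facet}. By Proposition~\ref{prop:elementary}, the recession cone of $\conv(B)$ is $\{(w,0):w\ge0\}$, so validity forces $\gamma\ge0$; since $\gamma\ne0$, we get $\gamma>0$. Scaling by $1/\gamma$ puts the inequality in the form $w-\alpha^\top z\ge-\beta$, with $\alpha:=-\pi/\gamma$ and $\beta:=-\pi_0/\gamma$. Uniqueness up to positive scaling is immediate, because $\conv(B)$ is full-dimensional (Proposition~\ref{prop:elementary}), so each facet has a unique defining inequality up to positive multiples. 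Once the coefficient of $w$ is fixed to $1$, the remaining coefficients are therefore determined. It remains to prove the three sign and size conditions.

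\emph{Condition $\beta>0$.} Evaluating validity at $(0,0)\in B$ gives $0\ge-\beta$, so $\beta\ge0$. Suppose $\beta=0$. Since $\widthf(\{i\})=0$, each point $(0,e_i)$ lies in $B$, and validity there gives $\alpha_i\le0$. The inequality $w-\alpha^\top z\ge0$ with $-\alpha\ge0$ is then the sum of $w\ge0$ and the valid inequalities $(-\alpha_i)z_i\ge0$. A facet that is a nonnegative combination of other valid inequalities must be a positive multiple of one of them. It cannot be a multiple of some $z_i\ge0$, because its $w$-coefficient is $1$. Hence it is a multiple of $w\ge0$, which contradicts nontriviality. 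So $\beta>0$.

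\emph{Conditions $\alpha_i\le\beta$ and $\alpha\ge0$.} The bound $\alpha_i\le\beta$ follows from validity at $(0,e_i)\in B$, which gives $0-\alpha_i\ge-\beta$. For $\alpha\ge0$, which I expect to be the main step, suppose $\alpha_j<0$ for some $j$. Every point of $B$ tight for the facet must then have $z_j=0$. Otherwise take a tight point $(w,z)$ with $z_j=1$ and set $S=S(z)\setminus\{j\}$. By monotonicity of width, $\widthf(S)\le\widthf(S(z))\le w$, so $(w,z-e_j)\in B$. At this point the left-hand side equals $w-\alpha^\top z+\alpha_j<w-\alpha^\top z=-\beta$, violating the inequality. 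Since all tight points have $z_j=0$, the facet is contained in the face $\{z_j=0\}$, which is itself a facet by Proposition~\ref{prop:elementary}. Two facets with one contained in the other coincide, so our inequality would be a multiple of $z_j\ge0$. This is impossible because its $w$-coefficient is nonzero. Hence $\alpha\ge0$.
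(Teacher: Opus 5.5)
Your proof is correct and follows essentially the same route as the paper: the recession direction $(1,0)$ gives a positive $w$-coefficient, downward closedness of $B$ (monotonicity of $\widthf$) rules out $\alpha_j<0$, and evaluating at the empty support and at the singletons $(0,e_i)$ yields $\beta\ge0$, $\beta>0$, and $\alpha_i\le\beta$. Your facet-containment argument, which shows that some tight point must have $z_j=1$, makes explicit a step the paper leaves implicit; and because you prove $\beta>0$ before $\alpha\ge0$, you need the nonnegative-combination argument, whereas the paper can conclude $\alpha=0$ directly.
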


\begin{proof}
	Write a valid inequality involving $w$ as $\gamma w+c^\top z\ge\delta.$
	Since $(1,0)$ is a recession direction of $\conv(B)$, validity implies
	$\gamma\ge0$.  Because the inequality involves $w$, we have $\gamma>0$
	and may scale it so that $\gamma=1$.
	
	The set $B$ is downward closed in the binary support: if $z',z\in\{0,1\}^n$,
	$z'\le z$, and $(w,z)$ is feasible, then $(w,z')$ is feasible because
	$\widthf(S(z'))\le\widthf(S(z))$.  If some coefficient $c_i>0$, removing
	point $i$ from a tight binary point with $z_i=1$ would decrease the left
	hand side while preserving feasibility, contradicting validity.
	Hence $c_i\le0$ for all $i$.  Set $\alpha:=-c\ge0$ and $\beta:=-\delta$.
	
	Validity at the empty support gives $\beta\ge0$.  If $\beta=0$, validity
	at each singleton gives $-\alpha_i\ge0$, and hence $\alpha=0$, so the
	inequality is $w\ge0$, contrary to nontriviality.  Therefore $\beta>0$.
	Finally, evaluating $w-\alpha^\top z\ge-\beta,$ at $(0,e_i)$ gives
	$\alpha_i\le\beta$.
\end{proof}

We state some properties of the function $wd$ that are crucial for polyhedral analysis.

\begin{proposition}\label{prop:width-properties}
The function $\widthf$ satisfies the following properties.
\begin{enumerate}[label=(\roman*)]
 \item $\widthf(\varnothing)=0$ and $\widthf(\{i\})=0$ for every $i\in[n]$.
 \item $\widthf$ is monotone: if $S\subseteq T$, then $\widthf(S)\le \widthf(T)$.
 \item 
 If $S,T\subseteq[n]$ and $S\cap T\ne\varnothing$, then
 \begin{equation}\label{eq:intersecting-submod}
  \widthf(S\cup T)+\widthf(S\cap T)
  \le
  \widthf(S)+\widthf(T).
 \end{equation}
\end{enumerate}
\end{proposition}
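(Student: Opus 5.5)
The plan is to reduce all three items to a single coordinate. Since $\widthf=\sum_{t=1}^d\widthf_t$, it suffices to prove each claim for a fixed $t\in[d]$ and then sum over $t$. Each of (i), (ii), and (iii) is preserved under summation: equalities and inequalities add termwise, and the hypothesis $S\cap T\ne\varnothing$ does not depend on $t$. So fix $t$, and write $u=u_t$, $\ell=\ell_t$ and $\widthf_t=u-\ell$ on nonempty sets.

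For (i), $\widthf_t(\varnothing)=0$ by definition, and for a singleton we have $u(\{i\})=\ell(\{i\})=p^i_t$. For (ii), if $S=\varnothing$ the claim is $0\le\widthf_t(T)$, which holds because $u(T)\ge\ell(T)$ for nonempty $T$. If $\varnothing\ne S\subseteq T$, then a maximum over a larger set is larger and a minimum is smaller, so $u(S)\le u(T)$ and $\ell(S)\ge\ell(T)$, which gives the inequality.

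For (iii), assume $S\cap T\ne\varnothing$, so that $S$, $T$, $S\cap T$ and $S\cup T$ are all nonempty and every term is given by the $u-\ell$ formula. We separate the upper and lower parts and show
\[
u(S\cup T)+u(S\cap T)\le u(S)+u(T),\qquad
\ell(S\cup T)+\ell(S\cap T)\ge \ell(S)+\ell(T);
\]
subtracting the second from the first yields \eqref{eq:intersecting-submod}. For the max part, $u(S\cup T)=\max\{u(S),u(T)\}$, and $u(S\cap T)\le\min\{u(S),u(T)\}$ by monotonicity of the maximum over nonempty sets. Adding these and using $\max+\min=u(S)+u(T)$ gives the first inequality. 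The min part is symmetric, with $\ell(S\cup T)=\min\{\ell(S),\ell(T)\}$ and $\ell(S\cap T)\ge\max\{\ell(S),\ell(T)\}$.

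The only delicate point is where the nonemptiness of $S\cap T$ enters. It is what allows the formula $u-\ell$ for $\widthf_t(S\cap T)$ and the bound $u(S\cap T)\le\min\{u(S),u(T)\}$. If $S\cap T$ were empty, its convention value $0$ would break the argument, and that is exactly the failure of global submodularity noted in the introduction. I will state this explicitly in the write-up.
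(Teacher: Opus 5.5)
Your proof is correct and takes the same route as the paper: fix a coordinate $t$, use $S\cap T\neq\varnothing$ to get $u(S\cup T)+u(S\cap T)\le u(S)+u(T)$ and the reverse inequality for the minima, subtract, and sum over $t$. You also spell out what the paper leaves implicit: the identities $u(S\cup T)=\max\{u(S),u(T)\}$ and $u(S\cap T)\le\min\{u(S),u(T)\}$, and the explicit checks for (i) and (ii).
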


\begin{proof}
The first two properties follow directly from the definition.  For the third, fix $t\in[d]$.  Since $S\cap T\ne\varnothing$,
\[
 \max_{i\in S\cup T}p_t^i
 +
 \max_{i\in S\cap T}p_t^i
 \le
 \max_{i\in S}p_t^i
 +
 \max_{i\in T}p_t^i,
\]
whereas
\[
 \min_{i\in S\cup T}p_t^i
 +
 \min_{i\in S\cap T}p_t^i
 \ge
 \min_{i\in S}p_t^i
 +
 \min_{i\in T}p_t^i.
\]
Subtracting the second inequality from the first gives the coordinate-wise version of~\eqref{eq:intersecting-submod}.  Summing over $t\in[d]$ proves the result.
\end{proof}

\subsection{Uncrossing and laminar structure}\label{sec:laminar}

Fix a nontrivial facet $w-\alpha^\top z\ge-\beta$
of $\conv(B)$.  Define its family of tight sets by
\begin{equation*}
	\mathcal T(\alpha,\beta):=\{S\subseteq[n]:\widthf(S)-\alpha(S)=-\beta\}.
\end{equation*}
For a family $\mathcal A\subseteq2^{[n]}$, let $\widehat{\mathcal A}:=\{(\widthf(S),\ind^S):S\in\mathcal A\}$.
Lemmas~\ref{lem:crossing} and~\ref{lem:laminar-span} are standard results in the uncrossing literature~\cite{LauRaviSingh2011}. We state the lemmas here and provide the proofs in Appendix~\ref{apdx:basic} for completeness.

\begin{restatable}{lemma}{crossing}[Crossing lemma]\label{lem:crossing}
    If $S,T\in\mathcal T(\alpha,\beta)$ and $S\cap T\ne\varnothing$, then $S\cap T\in\mathcal T(\alpha,\beta)$ and $S\cup T\in\mathcal T(\alpha,\beta)$.
	Moreover, $(\widthf(S),\ind^S)+(\widthf(T),\ind^T)
	=(\widthf(S\cap T),\ind^{S\cap T})+(\widthf(S\cup T),\ind^{S\cup T}).$
\end{restatable}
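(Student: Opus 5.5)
The plan is the standard uncrossing argument, with Proposition~\ref{prop:width-properties}(iii) used in place of full submodularity. For each set $U$, validity of $w-\alpha^\top z\ge-\beta$ at the binary point $(\widthf(U),\ind^U)\in B$ gives
\[
\widthf(U)-\alpha(U)\ge-\beta .
\]
Moreover, $U$ is tight exactly when equality holds.

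First, I take $S,T\in\mathcal T(\alpha,\beta)$ with $S\cap T\ne\varnothing$. Adding their tightness equations and using modularity of $\alpha$, namely $\alpha(S)+\alpha(T)=\alpha(S\cap T)+\alpha(S\cup T)$, gives
\[
-2\beta=\widthf(S)+\widthf(T)-\alpha(S\cap T)-\alpha(S\cup T).
\]
Since $S\cap T\ne\varnothing$, inequality~\eqref{eq:intersecting-submod} applies, so the right-hand side is at least
\[
\bigl(\widthf(S\cap T)-\alpha(S\cap T)\bigr)+\bigl(\widthf(S\cup T)-\alpha(S\cup T)\bigr).
\]
By validity, each of these two bracketed terms is at least $-\beta$.

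This produces the chain $-2\beta\ge(\text{sum of two terms, each}\ge-\beta)\ge-2\beta$. Hence every inequality in the chain is an equality. In particular, both $S\cap T$ and $S\cup T$ lie in $\mathcal T(\alpha,\beta)$.

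The chain also forces equality in~\eqref{eq:intersecting-submod}, that is, $\widthf(S)+\widthf(T)=\widthf(S\cap T)+\widthf(S\cup T)$. The incidence-vector identity $\ind^S+\ind^T=\ind^{S\cap T}+\ind^{S\cup T}$ holds trivially. Combining the two gives the stated vector identity.

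The only delicate point is the hypothesis $S\cap T\ne\varnothing$. Without it, the width function can violate~\eqref{eq:intersecting-submod}, so the argument genuinely needs intersecting submodularity and not merely the facet structure. Everything else is routine bookkeeping, so I expect no real obstacle.
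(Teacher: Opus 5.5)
Your proposal is correct and follows essentially the same route as the paper. The paper packages $\widthf(A)-\alpha(A)$ as $\phi(A)$, applies intersecting submodularity and modularity of $\alpha$ to get $\phi(S\cup T)+\phi(S\cap T)\le-2\beta$, and then uses validity to force equality, which yields both tightness and the width identity exactly as you do.
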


Two sets $S,T\subseteq[n]$ are \emph{compatible} if $S\subseteq T$,
$T\subseteq S$, or $S\cap T=\varnothing$.  A family is \emph{laminar}
if every pair of its members is compatible.  Otherwise, $S$ and $T$
are said to \emph{cross}.

\begin{restatable}{lemma}{laminarSpan}\label{lem:laminar-span}
    Let $\mathcal L$ be an inclusion-wise maximal laminar subfamily of
	$\mathcal T(\alpha,\beta)$.  Then $\aff(\widehat{\mathcal T(\alpha,\beta)})=\aff(\widehat{\mathcal L})$.
\end{restatable}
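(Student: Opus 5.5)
Since $\widehat{\mathcal L}\subseteq\widehat{\mathcal T(\alpha,\beta)}$, the inclusion $\aff(\widehat{\mathcal L})\subseteq\aff(\widehat{\mathcal T(\alpha,\beta)})$ is immediate. For the reverse inclusion I will use the standard Jain/Lau--Ravi--Singh argument, adapted to sets that are only intersecting-uncrossable, as Lemma~\ref{lem:crossing} provides. Write $V:=\aff(\widehat{\mathcal L})$. Suppose for contradiction that some $S\in\mathcal T(\alpha,\beta)$ has $(\widthf(S),\ind^S)\notin V$. Among all such sets, choose $S$ crossing the fewest members of $\mathcal L$. Since $\mathcal L$ is maximal laminar, $S$ cannot be compatible with every member of $\mathcal L$, for otherwise $\mathcal L\cup\{S\}$ would be a larger laminar subfamily. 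Hence $S$ crosses some $T\in\mathcal L$.

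Because $S$ and $T$ cross, they intersect. Lemma~\ref{lem:crossing} then gives $S\cap T,\ S\cup T\in\mathcal T(\alpha,\beta)$ and
\[
(\widthf(S),\ind^S)=(\widthf(S\cap T),\ind^{S\cap T})+(\widthf(S\cup T),\ind^{S\cup T})-(\widthf(T),\ind^T).
\]
This is an affine combination with coefficients $1,1,-1$, which sum to $1$. The point for $T$ lies in $V$. So if the points for $S\cap T$ and $S\cup T$ both lie in $V$, the point for $S$ lies in $V$ as well, a contradiction. It therefore suffices to show that $S\cap T$ and $S\cup T$ each cross strictly fewer members of $\mathcal L$ than $S$ does. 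Minimality of $S$ would then force both points into $V$.

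This counting step is the main obstacle, and I will follow the classical lemma. Let $R\in\mathcal L$, so $R$ is compatible with $T$. I check that if $R$ crosses $S\cap T$ (respectively $S\cup T$), then $R$ crosses $S$. The cases are $R\subseteq T$, $T\subseteq R$, and $R\cap T=\varnothing$.
\begin{itemize}
\item If $T\subseteq R$, then $S\cap T\subseteq R$, so $R$ does not cross $S\cap T$.
\item If $R\cap T=\varnothing$, then $R$ is disjoint from $S\cap T$.
\item If $R\subseteq T$ and $R$ crosses $S\cap T$, then $R\cap S\ne\varnothing$, $R\not\subseteq S$, and $S\cap T\not\subseteq R$. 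The last gives $S\not\subseteq R$, so $R$ crosses $S$.
\end{itemize}
The union case is symmetric, with inclusions reversed. Moreover, $T$ itself crosses $S$ but is compatible with both $S\cap T$ and $S\cup T$. This gives the strict decrease.

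I also need the case where $S\cap T$ or $S\cup T$ coincides with a set already in $\mathcal L$. That case is harmless, since such a set is in $V$ trivially.

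Affine rather than linear hulls require no extra care, because the combination above is affine. If the minimal-crossing bookkeeping is fiddly, a fallback is to induct on the number of crossings with respect to the whole family.
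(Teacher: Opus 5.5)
Your proposal is correct and is essentially the paper's proof. The paper inducts on $|c_{\mathcal L}(S)|$, the number of members of $\mathcal L$ crossing $S$, which is the same induction as your minimal-counterexample formulation. It likewise uncrosses $S$ with a crossing $R\in\mathcal L$ via Lemma~\ref{lem:crossing} and shows by the same case analysis that $c_{\mathcal L}(S\cap R)$ and $c_{\mathcal L}(S\cup R)$ are proper subsets of $c_{\mathcal L}(S)$.

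One small point: the union case is not literally the mirror image of the intersection case. In the subcase $R\cap T=\varnothing$, concluding $S\not\subseteq R$ uses $S\cap T\neq\varnothing$, which does hold because $S$ and $T$ cross.
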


\begin{corollary}\label{cor:laminar-basis}
	The family $\mathcal L$ contains $n+1$ sets whose corresponding tight
	points are affinely independent.  In particular, the equations
    \begin{equation}\label{eq:laminar-tight-equations}
        \widthf(S)-\alpha(S)=-\beta,\quad S\in\mathcal L,
    \end{equation}
	uniquely determine the coefficients $(\alpha,\beta)$ of the facet.
\end{corollary}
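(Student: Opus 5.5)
The plan is to combine the facet property with Lemma~\ref{lem:laminar-span}. Since $w-\alpha^\top z\ge-\beta$ defines a facet of $\conv(B)$, and $\conv(B)$ is full-dimensional in $\R^{n+1}$ by Proposition~\ref{prop:elementary}, the face contains $n+1$ affinely independent points. This face also contains recession directions: it is the convex hull of its extreme points plus $\rec(\conv(B))\cap\{w-\alpha^\top z=0\}$. That recession cone is $\{(w,0):w\ge0,\ w=0\}=\{0\}$, because the coefficient on $w$ is $1$ and $\rec(\conv(B))=\{(w,0):w\ge0\}$. Hence the face is a polytope generated by extreme points of $\conv(B)$ lying on it.

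The extreme points of $\conv(B)$ are of the form $(\widthf(S),\ind^S)$, since any point $(w,\ind^S)\in B$ with $w>\widthf(S)$ is $(\widthf(S),\ind^S)$ plus a recession direction. So the extreme points on the face are exactly the points $(\widthf(S),\ind^S)$ with $S\in\mathcal T(\alpha,\beta)$. Consequently $\aff(\widehat{\mathcal T(\alpha,\beta)})$ equals the affine hull of the facet, which is an $n$-dimensional hyperplane, and $\widehat{\mathcal T(\alpha,\beta)}$ contains $n+1$ affinely independent points.

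Next we invoke Lemma~\ref{lem:laminar-span}, which gives $\aff(\widehat{\mathcal L})=\aff(\widehat{\mathcal T(\alpha,\beta)})$. Therefore $\aff(\widehat{\mathcal L})$ has dimension $n$, so one can select $n+1$ sets in $\mathcal L$ whose points $(\widthf(S),\ind^S)$ are affinely independent. This is a standard fact: any finite set whose affine hull has dimension $n$ contains an affinely independent subset of size $n+1$.

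For uniqueness, suppose $(\alpha',\beta')$ satisfies \eqref{eq:laminar-tight-equations}. Then the hyperplane $H'=\{(w,z):w-\alpha'^\top z=-\beta'\}$ contains the $n+1$ affinely independent points, and hence contains their affine hull. That affine hull is the $n$-dimensional hyperplane $H=\{w-\alpha^\top z=-\beta\}$. Two hyperplanes in $\R^{n+1}$ with the same normalized $w$-coefficient $1$ that satisfy $H\subseteq H'$ must coincide, so $(\alpha',\beta')=(\alpha,\beta)$. Equivalently, the linear system in the unknowns $(\alpha,\beta)\in\R^{n+1}$ given by rows $(\ind^S,-1)$ and right-hand sides $\widthf(S)$ has full rank $n+1$. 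This holds because affine independence of the points $(\widthf(S),\ind^S)$ lying on a hyperplane with nonzero $w$-coefficient is equivalent to affine independence of the $\ind^S$, and hence to linear independence of the rows $(\ind^S,1)$.

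The main point needing care is the recession-cone argument showing that the facet is spanned by tight binary points rather than by directions; the remaining steps are linear algebra.
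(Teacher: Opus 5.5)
Your proof is correct and follows the paper's route. The facet's dimension gives $n+1$ affinely independent tight points, Lemma~\ref{lem:laminar-span} transfers their affine hull to $\widehat{\mathcal L}$, and uniqueness of the supporting hyperplane then fixes $(\alpha,\beta)$. You also spell out two steps the paper leaves implicit: that the facet has trivial recession cone, so it is the convex hull of the points $(\widthf(S),\ind^S)$ with $S$ tight, and the rank argument behind the uniqueness claim.
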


\begin{proof}
	The tight face of a facet of the full-dimensional set
	$\conv(B)\subseteq\R^{n+1}$ has affine dimension $n$.  Hence
	$\widehat{\mathcal T(\alpha,\beta)}$ contains $n+1$ affinely independent
	points.  Lemma~\ref{lem:laminar-span} shows that the same affine hull is
	generated by $\widehat{\mathcal L}$.  Uniqueness of the supporting
	hyperplane follows.
\end{proof}

Associate with $\mathcal L$ a rooted forest
$\mathcal F=(\mathcal L,A)$ as follows.  There is an arc from
$S\in\mathcal L$ to $T\in\mathcal L$ if $T\subsetneq S$ and no member of
$\mathcal L$ lies strictly between $T$ and $S$.  Thus the roots are the
inclusion-wise maximal members of $\mathcal L$, and the children of a
node are its inclusion-wise maximal proper subsets in $\mathcal L$.  Let
$\operatorname{ch}(S)$ denote the union of all children of $S$.

\begin{lemma}\label{lem:leaf-singleton}
	A node $S\in\mathcal L$ is a leaf of $\mathcal F$ if and only if $S$ is
	a singleton.
\end{lemma}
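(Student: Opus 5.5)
The plan is to prove both directions from the laminarity of $\mathcal L$, using the fact that $\varnothing$ is never tight and the uniqueness statement of Corollary~\ref{cor:laminar-basis}.

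\emph{Singletons are leaves.} First, $\varnothing\notin\mathcal T(\alpha,\beta)$. Indeed, $\widthf(\varnothing)-\alpha(\varnothing)=0\neq-\beta$, because $\beta>0$ by Proposition~\ref{prop:normalization}. Hence every member of $\mathcal L$ is nonempty. A singleton $\{i\}\in\mathcal L$ has no nonempty proper subset, so it has no children in $\mathcal F$ and is a leaf.

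\emph{Leaves are singletons.} Suppose $S\in\mathcal L$ is a leaf with $|S|\ge2$; we derive a contradiction.

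\begin{itemize}
\item \textbf{Step 1: each $T\in\mathcal L$ either contains $S$ or is disjoint from it.} Take $T\in\mathcal L$. By laminarity, $T\cap S=\varnothing$, $S\subseteq T$, or $T\subsetneq S$. The last case is impossible. If some member of $\mathcal L$ were strictly contained in $S$, then choosing an inclusion-wise maximal such member would give a child of $S$ in $\mathcal F$, contradicting that $S$ is a leaf.
\item \textbf{Step 2: the $\alpha_i$, $i\in S$, enter only through $\alpha(S)$.} By Step~1, for every $T\in\mathcal L$ we have $\alpha(T\cap S)\in\{0,\alpha(S)\}$, and which value occurs depends only on $T$, not on $\alpha$. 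So each equation in~\eqref{eq:laminar-tight-equations} involves the coordinates $\alpha_i$, $i\in S$, only through their sum $\alpha(S)$.
\item \textbf{Step 3: the equations do not determine $\alpha$.} Pick distinct $i,j\in S$ and $\epsilon\ne0$. Replace $\alpha$ by $\alpha'=\alpha+\epsilon(\ind^{\{i\}}-\ind^{\{j\}})$. Then $\alpha'(T)=\alpha(T)$ for every $T\in\mathcal L$, so $(\alpha',\beta)$ also satisfies every equation in~\eqref{eq:laminar-tight-equations}. This contradicts Corollary~\ref{cor:laminar-basis}, which says these equations uniquely determine $(\alpha,\beta)$.
\end{itemize}

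Equivalently, the tight points $(\widthf(T),\ind^T)$, $T\in\mathcal L$, all lie in the hyperplane $z_i=z_j$. Therefore they cannot contain $n+1$ affinely independent points in $\R^{n+1}$.

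The main obstacle is the case analysis in Step~1. There one must check that laminarity together with the leaf property really forces every member of $\mathcal L$ to be either a superset of $S$ or disjoint from $S$. Once that holds, the non-uniqueness in Step~3 is immediate, and no additional tightness information about singletons is needed.
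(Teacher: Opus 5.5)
Your proof is correct and follows the paper's argument. Singletons are leaves trivially, and a non-singleton leaf $S$ is, for every member of $\mathcal L$, either contained in it or disjoint from it, so perturbing $\alpha$ by a zero-sum vector supported on $S$ contradicts the uniqueness in Corollary~\ref{cor:laminar-basis}; you use $\epsilon(\ind^{\{i\}}-\ind^{\{j\}})$ where the paper uses a slightly different zero-sum $v$, and you also justify the non-emptiness of members of $\mathcal L$ and the contain-or-disjoint claim, which the paper leaves implicit. One small fix: your closing ``equivalently'' remark is not valid as written, since points in the single hyperplane $z_i=z_j$ of $\R^{n+1}$ can still include $n+1$ affinely independent ones; you must also use that they lie on the facet hyperplane $w-\alpha^\top z=-\beta$, so together they span at most an $(n-1)$-dimensional affine subspace.
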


\begin{proof}
	A singleton has no nonempty proper tight subset, and hence is a leaf
	whenever it belongs to $\mathcal L$.  Conversely, suppose that a leaf
	$S$ has at least two elements.  Then choose $v\in \mathbb R^n$ by fixing $j\in S$ and set 
    \begin{equation*}
        v_i = \begin{cases}
            \frac{1}{|S|} & i\neq j, i\in S\\
            -\frac{|S|-1}{|S|} & i = j\\
            0 & i \notin S
        \end{cases}
    \end{equation*}
    Define $\alpha^{(\epsilon)} = \alpha+ \epsilon v$. Consider any set $T\in \mathcal L$. Then either $T$ is disjoint from $S$ or $T$ contains $S$. If $T\cap S = \varnothing$, then $$\widthf(T) - \alpha^{(\epsilon)} (T) = \widthf(T) - \alpha (T) = -\beta.$$ If $S\subseteq T$, then $$\widthf(T) - \alpha^{(\epsilon)} (T) = \widthf(T) - \alpha(T) - \epsilon v(T) = \widthf(T) - \alpha(T)= -\beta.$$
    That is, there is another facet with coefficient $\alpha^{(\epsilon)}$ and $\beta$ that is also tight for all sets in $\mathcal L$, contradicting
	uniqueness in Corollary~\ref{cor:laminar-basis}.
\end{proof}

\begin{lemma}\label{lem:singleton-alpha}
	For $i\in[n]$, the singleton $\{i\}$ belongs to $\mathcal L$ if and only
	if $\alpha_i=\beta$.
\end{lemma}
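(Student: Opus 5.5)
The claim is that $\{i\}\in\mathcal L$ if and only if $\alpha_i=\beta$. I will prove the two directions separately, using the definition of tight sets together with the maximality of $\mathcal L$ as a laminar subfamily of $\mathcal T(\alpha,\beta)$.

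The first step is the identity: for every $i\in[n]$, the singleton $\{i\}$ is tight exactly when $\alpha_i=\beta$. By Proposition~\ref{prop:width-properties}(i), $\widthf(\{i\})=0$, so the tightness condition $\widthf(\{i\})-\alpha(\{i\})=-\beta$ reads $-\alpha_i=-\beta$. Hence $\{i\}\in\mathcal T(\alpha,\beta)$ if and only if $\alpha_i=\beta$.

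For the forward direction, assume $\{i\}\in\mathcal L$. Since $\mathcal L\subseteq\mathcal T(\alpha,\beta)$, the singleton is tight, and the identity gives $\alpha_i=\beta$.

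For the reverse direction, assume $\alpha_i=\beta$, so $\{i\}\in\mathcal T(\alpha,\beta)$. I claim $\{i\}$ is compatible with every set $T\subseteq[n]$: if $i\in T$ then $\{i\}\subseteq T$, and otherwise $\{i\}\cap T=\varnothing$. Therefore $\mathcal L\cup\{\{i\}\}$ is still a laminar subfamily of $\mathcal T(\alpha,\beta)$. Since $\mathcal L$ is inclusion-wise maximal among such subfamilies, we must have $\{i\}\in\mathcal L$.

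The argument is elementary, and I do not expect any step to be an obstacle. The only point needing care is that the reverse direction depends on maximality of $\mathcal L$, not merely on the fact that $\mathcal L$ is laminar; a laminar family that is not maximal could omit a tight singleton.
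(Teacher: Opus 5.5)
Your proposal is correct and follows essentially the same argument as the paper. The paper also uses $\widthf(\{i\})=0$ to show that $\{i\}$ is tight exactly when $\alpha_i=\beta$, and then uses that a tight singleton is compatible with every set, so maximality of $\mathcal L$ forces $\{i\}\in\mathcal L$.
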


\begin{proof}
	The singleton is tight if and only if $0-\alpha_i=-\beta,$
	which is equivalent to $\alpha_i=\beta$.  A tight singleton is
	compatible with every set, so maximality of $\mathcal L$ implies that
	it belongs to $\mathcal L$.
\end{proof}

\begin{lemma}\label{lem:one-new-element}
	Every non-leaf node $S\in\mathcal L$ satisfies $|S\setminus\operatorname{ch}(S)|\le1.$
\end{lemma}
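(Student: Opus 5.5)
The plan is to reuse the perturbation argument from the proof of Lemma~\ref{lem:leaf-singleton}, now applied to the elements of $S$ that are not covered by the children of $S$. The key fact is that the equations~\eqref{eq:laminar-tight-equations} only see $\alpha$ through the sums $\alpha(T)$ for $T\in\mathcal L$. So if two elements of $S$ are uncovered by $\operatorname{ch}(S)$, we can move weight between them without changing any of these sums. This contradicts the uniqueness statement of Corollary~\ref{cor:laminar-basis}.

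Concretely, I would argue by contradiction. Suppose $S\in\mathcal L$ is a non-leaf with two distinct elements $j,k\in S\setminus\operatorname{ch}(S)$. Define $v:=\ind^{\{j\}}-\ind^{\{k\}}\in\R^n$ and $\alpha^{(\epsilon)}:=\alpha+\epsilon v$ for $\epsilon\neq0$. Keep $\beta$ unchanged. We must check that $\widthf(T)-\alpha^{(\epsilon)}(T)=-\beta$ for every $T\in\mathcal L$, which amounts to $v(T)=0$. By laminarity, each $T\in\mathcal L$ falls into one of three cases.
\begin{enumerate}
\item $T\cap S=\varnothing$: then $T$ contains neither $j$ nor $k$, so $v(T)=0$.
\item $S\subseteq T$: then $T$ contains both $j$ and $k$, so $v(T)=1-1=0$.
\item $T\subsetneq S$: this case is handled in the next paragraph.
\end{enumerate}

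The one point needing care is the third case. I will show that every $T\in\mathcal L$ with $T\subsetneq S$ is contained in some child of $S$. The chain argument goes as follows. Among the members of $\mathcal L$ that lie between $T$ and $S$, pick one, $C$, that is inclusion-wise maximal subject to $T\subseteq C\subsetneq S$. No member of $\mathcal L$ lies strictly between $C$ and $S$: any such member would also contain $T$ and be a proper subset of $S$, contradicting the maximality of $C$. Hence $C$ is a child of $S$. It follows that $T\subseteq C\subseteq\operatorname{ch}(S)$, so $T$ avoids both $j$ and $k$, and $v(T)=0$.

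Thus $(\alpha^{(\epsilon)},\beta)$ satisfies all the equations~\eqref{eq:laminar-tight-equations}, and $\alpha^{(\epsilon)}\neq\alpha$. This contradicts the uniqueness in Corollary~\ref{cor:laminar-basis}, so $|S\setminus\operatorname{ch}(S)|\le1$. I do not expect a real obstacle. Validity of the perturbed inequality is never needed, since uniqueness of the solution to the linear system alone gives the contradiction.
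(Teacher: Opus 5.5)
Your proof is correct and follows the paper's argument. The paper takes any nonzero $v$ supported on $H=S\setminus\operatorname{ch}(S)$ with $v(H)=0$ (your $e_j-e_k$ is one such choice), checks $v(T)=0$ for every $T\in\mathcal L$ via the same laminarity case split, and contradicts the uniqueness in Corollary~\ref{cor:laminar-basis}. Your maximal-member argument, showing that every $T\in\mathcal L$ with $T\subsetneq S$ lies inside a child of $S$, fills in a step the paper asserts without proof.
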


\begin{proof}
	The proof follows the same analysis as Lemma~\ref{lem:leaf-singleton}. Let $H:=S\setminus\operatorname{ch}(S)$ and suppose $|H|\ge2$.  Choose
	a nonzero vector $v$ supported on $H$ with $v(H)=0$.  Every member
	$T\in\mathcal L$ is either disjoint from $H$ or contains all of $H$: if
	$T\subsetneq S$, then $T$ lies in one of the children of $S$ and is
	disjoint from $H$; if $S\subseteq T$, then $H\subseteq T$.  Hence
	$v(T)=0$ for every $T\in\mathcal L$.  As in
	Lemma~\ref{lem:leaf-singleton}, this contradicts uniqueness of the
	solution to~\eqref{eq:laminar-tight-equations}.
\end{proof}

\begin{lemma}\label{lem:roots-cover}
	Every element of $[n]$ belongs to a root of $\mathcal F$.
\end{lemma}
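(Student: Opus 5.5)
We argue by contradiction, along the same lines as Lemmas~\ref{lem:leaf-singleton} and~\ref{lem:one-new-element}: we perturb the facet coefficients without destroying tightness of any member of $\mathcal L$. Let $U$ denote the union of all roots of $\mathcal F$, which is the same as the union of all members of $\mathcal L$. Suppose some element $j\in[n]$ lies outside $U$. Then $j$ belongs to no member of $\mathcal L$, so $\chi^{\{j\}}$ has zero inner product with $\ind^S$ for every $S\in\mathcal L$.

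Consider the perturbation $\alpha^{(\epsilon)}:=\alpha+\epsilon e_j$ with $\beta$ unchanged. For every $S\in\mathcal L$ we have $j\notin S$, and therefore
\[
\widthf(S)-\alpha^{(\epsilon)}(S)=\widthf(S)-\alpha(S)=-\beta .
\]
Hence $(\alpha^{(\epsilon)},\beta)$ solves the system~\eqref{eq:laminar-tight-equations} for every $\epsilon$. Corollary~\ref{cor:laminar-basis} says this system determines $(\alpha,\beta)$ uniquely, so taking any $\epsilon\neq0$ gives a contradiction. Consequently $U=[n]$.

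The step to justify carefully is that the linear system determines $(\alpha,\beta)$, including $\beta$, so that a nonzero change in $\alpha$ alone genuinely violates uniqueness. This holds because $\mathcal L$ contains $n+1$ sets whose points $(\widthf(S),\ind^S)$ are affinely independent. As a result, the homogeneous system $v(S)-\delta=0$ for $S\in\mathcal L$ has only the trivial solution: a nonzero solution $(v,\delta)$ would define a second hyperplane containing all these points.

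We can also state this concretely. Affine independence of the points $(\widthf(S),\ind^S)$ means the vectors $(\widthf(S),\ind^S,1)$ are linearly independent. Every $\ind^S$ with $S\in\mathcal L$ has zero $j$-th coordinate, so these $n+1$ independent vectors would lie in a space of dimension $n+1$ minus one free coordinate. Thus they span at most an $n$-dimensional space, which contradicts their linear independence. We present the perturbation argument in the same style as the earlier lemmas and note this dimension count as its justification.
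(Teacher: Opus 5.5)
Your perturbation argument is the paper's proof: if $j$ lies in no member of $\mathcal L$, then $\alpha+\eps e_j$ still satisfies every equation in \eqref{eq:laminar-tight-equations}, which contradicts the uniqueness in Corollary~\ref{cor:laminar-basis}; your homogeneous-system justification of that uniqueness is also sound. Your closing ``concrete'' dimension count, however, is off by one and should be dropped or repaired: the vectors $(\widthf(S),\ind^S,1)$ live in $\R^{n+2}$, so forcing a zero $z_j$-coordinate still leaves an $(n+1)$-dimensional space, which can hold $n+1$ independent vectors, and the contradiction appears only after you also use the facet relation $\widthf(S)-\alpha(S)+\beta=0$, which cuts this space down to dimension $n$.
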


\begin{proof}
	If some $j\in[n]$ belonged to no member of $\mathcal L$, then replacing
	$\alpha$ by $\alpha+\eps e_j$ would leave every equation
	\eqref{eq:laminar-tight-equations} unchanged, contradicting
	Corollary~\ref{cor:laminar-basis}.
\end{proof}

\subsection{Sequential lifting}\label{subsec:onedim-sequential-lifting}
Fix $\beta>0$ and a permutation $\pi=(\pi_1,\ldots,\pi_n)$
of $[n]$. Let $A_k:=\{\pi_1,\ldots,\pi_k\}$ be the prefix set indexed by the permutation. Define $A_0 = \varnothing$.
Starting from the valid inequality $w\ge-\beta,$
define recursively
\begin{equation}\label{eq:permutation-lifting-recursion}
\alpha^{\beta,\pi}_{\pi_k}
:=
\min_{S\subseteq A_{k-1}}
\left\{
\widthf(S\cup\{\pi_k\}) -\alpha^{\beta,\pi}(S) +\beta
\right\},
\qquad k=1,\ldots,n.
\end{equation}
For example, the coefficients of the first two variables indexed by the permutation $\pi$ are $\alpha^{\beta,\pi}_{\pi_1}=\beta$
and $\alpha^{\beta,\pi}_{\pi_2}
=
\min\left\{
\beta,\,
|p_{\pi_2}-p_{\pi_1}|
\right\}.$

We remark that~\eqref{eq:permutation-lifting-recursion} is a lifting optimization problem which is the key machinery to yield a valid inequality. Moreover, the lifting problem, after switching minimization to maximization, is exactly the Boxing Problem defined in Section~\ref{sec:intro} for which a polynomial-time algorithm is provided. In this section, we prove that all facets can be obtained via sequential lifting.

Fix an arbitrary facet $w-\bar\alpha^\top z\ge-\bar\beta$, then there exists a laminar forest $\mathcal F(\bar\alpha,\bar\beta)$ associated to the maximal laminar sets family formed from $\mathcal T(\bar\alpha,\bar\beta)$. For each element $j\in[n]$, let $N(j)$ be the unique node in $\mathcal F(\bar\alpha,\bar\beta)$ at which $j$ is introduced. That is, $N(j)\setminus\operatorname{ch}(N(j))=\{j\}.$
\begin{definition}\label{def:bottom-up-order}
    Fix a forest $\mathcal F(\bar\alpha,\bar\beta)$ induced by an arbitrary facet $w-\bar\alpha^\top z\ge-\bar\beta$. A bottom-up ordering induced by $\mathcal F(\bar\alpha,\bar\beta)$ is any permutation $\pi=(\pi_1,\ldots,\pi_n)$ such that, whenever $j$ is introduced at a node $N(j)$, every element contained in the children of $N(j)$ appears before $j$ in $\pi$.
\end{definition}

\begin{theorem}
\label{thm:facet-reconstruction-lifting}
Let $w-\bar\alpha^\top z\ge-\bar\beta$
be a nontrivial facet of $\conv(B)$. Let $\pi$ be any bottom-up ordering induced by $\mathcal F(\bar\alpha,\bar\beta)$. Starting from $w\ge-\bar\beta$
and sequentially lifting the variables in the order $\pi$ using
\eqref{eq:permutation-lifting-recursion} produces exactly the coefficient
vector $\bar\alpha$.
\end{theorem}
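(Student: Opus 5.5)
We argue by induction on $k$ that $\alpha^{\bar\beta,\pi}_{\pi_k}=\bar\alpha_{\pi_k}$ for every $k$. Suppose the first $k-1$ lifted coefficients agree with $\bar\alpha$, and write $j:=\pi_k$. The recursion~\eqref{eq:permutation-lifting-recursion} with $\beta=\bar\beta$ then reads
\[
\alpha^{\bar\beta,\pi}_{j}=\min_{S\subseteq A_{k-1}}\bigl\{\widthf(S\cup\{j\})-\bar\alpha(S)+\bar\beta\bigr\}.
\]
We establish the two inequalities separately.

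For the lower bound, validity suffices. The facet $w-\bar\alpha^\top z\ge-\bar\beta$ is valid at every binary point $(\widthf(S\cup\{j\}),\ind^{S\cup\{j\}})$ with $S\subseteq A_{k-1}$, and $j\notin S$. This gives $\widthf(S\cup\{j\})-\bar\alpha(S)-\bar\alpha_j\ge-\bar\beta$, so every term in the minimum is at least $\bar\alpha_j$. Hence $\alpha^{\bar\beta,\pi}_{j}\ge\bar\alpha_j$.

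For the upper bound, we exhibit an $S\subseteq A_{k-1}$ attaining the value $\bar\alpha_j$. Let $N:=N(j)$, the node of the laminar forest $\mathcal F(\bar\alpha,\bar\beta)$ at which $j$ is introduced, so that $N\setminus\operatorname{ch}(N)=\{j\}$. Such a node exists because, by Lemma~\ref{lem:roots-cover}, every element lies in some root. Following the element down the forest, it eventually leaves every child. This happens either at a singleton leaf, which by Lemma~\ref{lem:leaf-singleton} is $\{j\}$ itself, or at a non-leaf node where it is the unique new element, by Lemma~\ref{lem:one-new-element}. We take $S:=\operatorname{ch}(N)=N\setminus\{j\}$.

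By Definition~\ref{def:bottom-up-order}, every element of the children of $N$ precedes $j$ in $\pi$, so $S\subseteq A_{k-1}$. Since $N\in\mathcal L\subseteq\mathcal T(\bar\alpha,\bar\beta)$, we have $\widthf(S\cup\{j\})-\bar\alpha(S)-\bar\alpha_j=-\bar\beta$. This is exactly the statement that the term indexed by $S$ equals $\bar\alpha_j$. The degenerate case $N=\{j\}$ gives $S=\varnothing$ and the term $\bar\beta$; tightness of $\{j\}$ means $\bar\alpha_j=\bar\beta$ by Lemma~\ref{lem:singleton-alpha}, so the formula still holds.

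Combining the two bounds gives $\alpha^{\bar\beta,\pi}_{j}=\bar\alpha_j$, which completes the induction. The base case $k=1$ is an instance of the same argument. There $A_0=\varnothing$, so the minimum is $\bar\beta$. The element $\pi_1$ has no predecessors, so its node has no children and is the leaf $\{\pi_1\}$. That leaf is tight, and Lemma~\ref{lem:singleton-alpha} gives $\bar\alpha_{\pi_1}=\bar\beta$.

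The main obstacle is the bookkeeping that guarantees $N(j)$ is well defined and unique for every $j$. The facts needed are already available. Laminarity makes the members of $\mathcal L$ containing $j$ a chain. Its minimal member is either $\{j\}$ or a node from which $j$ is absent in all children, which is precisely the node at which $j$ is introduced. Once this is in place, the rest is the two-sided bound above. No explicit use of intersecting submodularity is needed beyond what is already packaged in Lemmas~\ref{lem:crossing}--\ref{lem:roots-cover}.
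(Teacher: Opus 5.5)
Your proof is correct and follows the same route as the paper. You induct along $\pi$, take the lower bound from validity of the facet at $S\cup\{j\}$, and take the matching upper bound from the tight node $N(j)$ with witness $S=\operatorname{ch}(N(j))\subseteq A_{k-1}$. Your extra checks that $N(j)$ is well defined and that the leaf case ($S=\varnothing$, $\bar\alpha_j=\bar\beta$) fits the same formula are details the paper leaves implicit.
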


\begin{proof}
We induct on the lifting order. Suppose $j = \pi_1$ is the first element in the permutation. Then $j$ is a leaf node whose corresponding variable has coefficient $\bar\alpha_j = \bar \beta$, which agrees with the lifting coefficient.

Now suppose the variables in $A_{k-1}=\{\pi_1,\ldots,\pi_{k-1}\}$
have already received their facet coefficients, and let $j:=\pi_k.$ For every $R\subseteq A_{k-1}$, validity of the final facet
at $R\cup\{j\}$ gives
\[
\widthf(R\cup\{j\})
-\bar\alpha(R)
-\bar\alpha_j
\ge-\bar\beta \iff \widthf(R\cup\{j\})-\bar\alpha(R)+\bar\beta
\ge\bar\alpha_j.
\]
Thus every feasible solution of the lifting problem has value at least
$\bar\alpha_j$.

Conversely, consider $N(j)$, the node introducing $j$, and $\operatorname{ch}(N(j))$. 
Since the node $N(j) =\operatorname{ch}(N(j))\cup\{j\}$ is tight on the facet, we have
\begin{equation}\label{eq:tight-node-lifting-witness}
\widthf(N(j))-\bar\alpha(\operatorname{ch}(N(j)))+\bar\beta
=
\bar\alpha_j.
\end{equation}
Hence $\operatorname{ch}(N(j))\subseteq A_{k-1}$ is a feasible solution to the lifting problem
\eqref{eq:permutation-lifting-recursion}, and the lifting coefficient is
at most $\bar\alpha_j$. Combining this with
\eqref{eq:tight-node-lifting-witness} shows that the exact lifting
coefficient is $\bar\alpha_j$.
\end{proof}

We want to emphasize that Theorem~\ref{thm:facet-reconstruction-lifting} is dimension-independent. However, to obtain all facets using Theorem~\ref{thm:facet-reconstruction-lifting}, we need to enumerate all right-hand side $\beta$. The number of candidate right-hand side varies depending on dimensions. We address this issue in the next two sections.

%% file: 1Dboxing.tex
\section{The One-dimensional Boxing Set}\label{sec:onedim}
Throughout this section, $d=1$.  We assume that the point coordinates are distinct and indexed so that $ p^1<p^2<\cdots<p^n$. The case where the points are non-distinct can be reduced to the distinct case. See Appendix~\ref{subsec:repeated-coordinates} for details. We use throughout the normalization from Proposition~\ref{prop:normalization}. Thus every
nontrivial $w$-facet is written as $w-\alpha^\top z\ge -\beta$, with $\alpha\ge 0$ and $\beta>0$. For a nonempty set $S\subseteq[n]$ we use $u(S)$ and $\ell(S)$ to denote $\max_{i\in S}p^i$ and $\min_{i\in S}p^i$, respectively. Similarly, $\widthf(S) = u(S) - \ell(S)$.

The laminar-family results of Section~\ref{sec:basic} hold without any assumption on the dimension. In one dimension, the geometry of the laminar
forest becomes substantially more explicit, where
disjoint tight sets can be ordered from left to right. Thus, the ordering of the points allows us to turn
this structure into a particularly simple sequential-lifting description of
all facets. 
\subsection{The right-hand side of a one-dimensional facet}
\label{subsec:onedim-right-hand-side}

We now connect the lifting construction to the laminar forest of
Section~4.

Fix a nontrivial facet $w-\alpha^\top z\ge-\beta$
and let $\mathcal L$ be an inclusion-wise maximal laminar family of its tight
sets as in Section~4. For two disjoint sets $S,T\subseteq[n]$, write $S\prec T$ if
$u(S)<\ell(T)$
and, in this case, define
\[
\operatorname{gap}(S,T)
:=
\min_{j\in T}p^j-\max_{i\in S}p^i.
\]

\begin{lemma}\label{lem:disjoint-tight-gap}
Let $S,T\in\mathcal L$ be disjoint. Then their coordinate intervals $[\ell(S), u(S)]$ and $[\ell(T), u(T)]$ do not
overlap. After relabeling so that $S\prec T$, $\operatorname{gap}(S,T)\ge\beta$.
Moreover, $S\cup T$ is tight if and only if $\operatorname{gap}(S,T)=\beta.$
\end{lemma}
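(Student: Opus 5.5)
The proof works with the union $S\cup T$, which is not required to lie in $\mathcal L$, and combines validity of the facet at $S\cup T$ with tightness of $S$ and $T$. Since $S,T\in\mathcal L$ are tight, $\widthf(S)-\alpha(S)=-\beta$ and $\widthf(T)-\alpha(T)=-\beta$. Because $S\cap T=\varnothing$, we have $\alpha(S\cup T)=\alpha(S)+\alpha(T)$. Validity of $w-\alpha^\top z\ge-\beta$ at the binary point $(\widthf(S\cup T),\ind^{S\cup T})\in B$ then gives
\[
\widthf(S\cup T)\ge \alpha(S)+\alpha(T)-\beta=\widthf(S)+\widthf(T)+\beta .
\]
The whole proof reduces to comparing $\widthf(S\cup T)$ with $\widthf(S)+\widthf(T)$ in one dimension.

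\emph{Non-overlap.} Suppose the intervals $[\ell(S),u(S)]$ and $[\ell(T),u(T)]$ intersect. Their union is then a single interval, so
\[
\widthf(S\cup T)=\max(u(S),u(T))-\min(\ell(S),\ell(T))\le \widthf(S)+\widthf(T),
\]
since the length of the union of two intersecting intervals is at most the sum of their lengths. Combined with the display above, this gives $\beta\le0$, contradicting $\beta>0$ from Proposition~\ref{prop:normalization}. Hence the intervals are disjoint. The coordinates are distinct and the sets are disjoint, so after relabeling $u(S)<\ell(T)$, that is, $S\prec T$.

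\emph{Gap bound and tightness.} With $S\prec T$,
\[
\widthf(S\cup T)=u(T)-\ell(S)=\widthf(S)+\operatorname{gap}(S,T)+\widthf(T).
\]
Substituting into the validity inequality yields $\operatorname{gap}(S,T)\ge\beta$. Moreover,
\[
\widthf(S\cup T)-\alpha(S\cup T)=-2\beta+\operatorname{gap}(S,T),
\]
and $S\cup T$ is tight exactly when this equals $-\beta$, i.e.\ when $\operatorname{gap}(S,T)=\beta$. None of the steps is a real obstacle. The only point needing care is the overlapping case, where the one-dimensional fact that two intersecting intervals cover a length at most the sum of their lengths replaces any need for intersecting submodularity, which does not apply because $S\cap T=\varnothing$.
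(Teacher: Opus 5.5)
Your proposal is correct and follows essentially the same argument as the paper. Both combine tightness of $S$ and $T$ with validity at $S\cup T$ to get $\widthf(S\cup T)-\widthf(S)-\widthf(T)\ge\beta$, which rules out overlapping intervals and, once $S\prec T$, becomes $\gap(S,T)\ge\beta$, with equality exactly when $S\cup T$ is tight.
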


\begin{proof}
Since $S$ and $T$ are tight, we have $\alpha(S)=\widthf(S)+\beta,$ and $\alpha(T)=\widthf(T)+\beta$.
Validity at $S\cup T$ gives
\begin{equation}\label{eq:disjoint-tight-validity-bound}
\widthf(S\cup T)-\widthf(S)-\widthf(T)\ge\beta.
\end{equation}
If their coordinate intervals overlapped, the left-hand side of
\eqref{eq:disjoint-tight-validity-bound} would be nonpositive, a
contradiction. Hence the intervals are disjoint.

Assume $S\prec T$. Then $\widthf(S\cup T)
=
\widthf(S)+\widthf(T)+\operatorname{gap}(S,T).$
Substitution in \eqref{eq:disjoint-tight-validity-bound} gives $\operatorname{gap}(S,T)\ge \beta$. Equality in the facet at
$S\cup T$ holds if and only if equality holds in
\eqref{eq:disjoint-tight-validity-bound}, which proves $\operatorname{gap}(S,T)= \beta$.
\end{proof}

The next result is the only place where the one-dimensional geometry is
needed to identify the possible right-hand sides.

\begin{lemma}\label{lem:beta-pair-distance}
For every nontrivial $w$-facet, there exist
indices $a<b$ such that $\beta=p^b-p^a.$
\end{lemma}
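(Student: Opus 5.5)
The plan is to exploit the uniqueness statement of Corollary~\ref{cor:laminar-basis}. I will show that, unless some node of the laminar forest $\mathcal F$ is exactly the union of its children, the equations~\eqref{eq:laminar-tight-equations} admit a one-parameter family of solutions in which $\beta$ varies. That would contradict uniqueness. At a node that is the union of its children, Lemma~\ref{lem:disjoint-tight-gap} will force $\beta$ to equal a gap between two consecutive points.

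\textbf{Step 1 (solving the laminar system for a fixed $\beta$).} Fix a maximal laminar family $\mathcal L$ and its forest $\mathcal F$. By Lemma~\ref{lem:one-new-element}, each non-leaf node $S$ introduces at most one new element, so $|S\setminus\operatorname{ch}(S)|\le 1$. By Lemma~\ref{lem:leaf-singleton}, each leaf is a singleton $\{j\}$, and its equation reads $\alpha_j=\beta$. For a non-leaf node $S$ with children $C_1,\dots,C_m$ and introduced element $j$, the equation for $S$ together with the equations for the children gives
\[
\alpha_j=\widthf(S)-\sum_{r=1}^m\bigl(\widthf(C_r)+\beta\bigr)+\beta .
\]
By Lemma~\ref{lem:roots-cover}, every element is either a leaf or is introduced at exactly one node. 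Processing the forest bottom-up therefore defines, for \emph{every} real parameter $\beta'$, a vector $\alpha(\beta')$ that is affine in $\beta'$.

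\textbf{Step 2 (the contradiction forces a union node).} Suppose every non-leaf node introduces exactly one element. Then the pair $(\alpha(\beta'),\beta')$ satisfies all equations~\eqref{eq:laminar-tight-equations} for every $\beta'$. This holds by induction up the forest, since each node equation is exactly the one used to define its new coefficient. The solution set is then not a single point, contradicting Corollary~\ref{cor:laminar-basis}. Hence some non-leaf node $S$ satisfies $S=\operatorname{ch}(S)=C_1\cup\dots\cup C_m$.

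Such a node has $m\ge2$ children. Indeed, a single child would equal $S$, which is impossible because children are proper subsets. Combining the tight equations for $S$ and for the $C_r$ gives
\[
\widthf(S)-\sum_{r=1}^m\widthf(C_r)=(m-1)\beta .
\]

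\textbf{Step 3 (one-dimensional geometry).} The children are pairwise disjoint members of $\mathcal L$. By Lemma~\ref{lem:disjoint-tight-gap}, their coordinate intervals do not overlap, so after relabeling $C_1\prec C_2\prec\cdots\prec C_m$. Then
\[
\widthf(S)-\sum_r\widthf(C_r)=\sum_{r=1}^{m-1}\gap(C_r,C_{r+1}),
\]
and each gap is at least $\beta$, again by Lemma~\ref{lem:disjoint-tight-gap}. A sum of $m-1$ terms, each at least $\beta$, equals $(m-1)\beta$ only if every term equals $\beta$. In particular, $\beta=\gap(C_1,C_2)=p^b-p^a$ with $a$ the index attaining $u(C_1)$ and $b$ the index attaining $\ell(C_2)$. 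Since $C_1\prec C_2$, we have $p^a<p^b$, hence $a<b$.

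\textbf{Main obstacle.} The delicate point is Step 2: verifying that the affine family really satisfies \emph{every} equation of $\mathcal L$, not only those used to define the coefficients. When no node is a union of its children, each node equation is used exactly once, to define the coefficient of the element it introduces. So the system is triangular with $\beta'$ free, which is what makes the argument go through.
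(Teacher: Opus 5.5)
Your proof is correct and follows the paper's argument. You locate a node of $\mathcal F$ that is exactly the union of its children, and then use Lemma~\ref{lem:disjoint-tight-gap} to force every consecutive gap between its children to equal $\beta$. The only difference is how you find that node: the paper counts ($|\mathcal L|\ge n+1$ by Corollary~\ref{cor:laminar-basis}, while exactly $n$ nodes introduce an element), whereas you use the uniqueness half of the same corollary by exhibiting the one-parameter family $(\alpha(\beta'),\beta')$, an equivalent but slightly longer route.
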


\begin{proof}
For a node $S\in\mathcal L$, let $\operatorname{ch}(S)$ denote the union
of its children in the laminar forest. By Lemma~4.9, $|S\setminus\operatorname{ch}(S)|\le1.$
Moreover, every element of $[n]$ is introduced at exactly one node of the
forest: starting from a root containing the element, follow the unique
child containing it until no such child exists.

Thus exactly $n$ nodes of the forest introduce an element. On the other
hand, Corollary~\ref{cor:laminar-basis} implies that $\mathcal L$ contains at least $n+1$
nodes. Hence some non-leaf node $S$ introduces no element. Write its
children, from left to right, as $S_1\prec S_2\prec\cdots\prec S_k$, $k\ge2.$
Then $S=S_1\cup\cdots\cup S_k.$ Since the coordinate intervals are disjoint,
\[
\widthf(S)
=
\sum_{q=1}^k\widthf(S_q)
+
\sum_{q=1}^{k-1}\operatorname{gap}(S_q,S_{q+1}).
\]
Using tightness of $S$ and each $S_q$ gives
\[\sum_{q=1}^{k-1}\operatorname{gap}(S_q,S_{q+1})
=
(k-1)\beta.\]
By Lemma~\ref{lem:disjoint-tight-gap}, every term on the left-hand side is
at least $\beta$. Therefore, every term equals $\beta$. It remains to find two indices $a,b\in [n]$ with $p^b-p^a = \beta$. Choose any $q\in[k-1]$, let
\[
a\in\arg\max\{p^i:i\in S_q\},
\qquad
b\in\arg\min\{p^i:i\in S_{q+1}\}.
\]
Then $p^b-p^a=\operatorname{gap}(S_q,S_{q+1})=\beta$.
\end{proof}

Thus, we define the finite set of candidate right-hand sides using $\text{\ss}
:=
\{p^j-p^i:1\le i<j\le n\}.$
\subsection{Facet description}
\label{subsec:onedim-explicit-facets}

Section~5.1 shows that the right-hand side of every nontrivial
one-dimensional facet belongs to the finite set $\text{\ss}$.
We now make the corresponding facet coefficients explicit. The main observation is that, in one dimension, an exact lifting
coefficient depends only on the nearest variables that have already been
lifted to the left and to the right.

\subsubsection{The nearest-neighbor lifting rule}
Fix $\beta>0$ and a permutation $\pi=(\pi_1,\ldots,\pi_n)$ of $[n]$. For $k\in[n]$, let $A_k:=\{\pi_1,\ldots,\pi_k\}$ be the prefix set, and define $A_0:=\varnothing$. For $k\ge2$, set $j:=\pi_k$ and define the nearest previously inserted
indices to the left and right of $p^j$ by
\[
L_k
:=
\max\{i\in A_{k-1}:i<\pi_k\},
\qquad
R_k
:=
\min\{i\in A_{k-1}:i>\pi_k\},
\]
whenever the corresponding sets are nonempty. We use the notation $(x)_+:=\max\{x,0\}.$
Define $\alpha^{\beta,\pi}\in\mathbb R^n_+$ recursively as follows:
\begin{equation}\label{eq:onedim-neighbor-coefficient}
\alpha^{\beta,\pi}_{\pi_k}
:=
\begin{cases}
\beta & \text{if neither $L_k$ nor $R_k$ exists},\\[1mm]
\min\{\beta,p^{R_k}-p^{\pi_k}\},
    &\text{if $L_k$ does not exist},\\[1mm]
\min\{\beta,p^{\pi_k}-p^{L_k}\},
    &\text{if $R_k$ does not exist},\\[1mm]
\min\!\left\{
    \beta,\,
    p^{\pi_k}-p^{L_k},\,
    p^{R_k}-p^{\pi_k},\,
    (p^{R_k}-p^{L_k}-\beta)_+
    \right\},
    &\text{otherwise}.
\end{cases}\tag{Nearest Neighbor Lifting Rule}
\end{equation}

Thus the coefficient of a newly inserted point depends only on its two
nearest previously inserted neighbors.  We view the lifting process as building the laminar forest of the underlying facet from bottom up. The four quantities in
\eqref{eq:onedim-neighbor-coefficient} have a direct interpretation:
$\beta$ creates a new leaf node, the two one-sided distances extend a current root to the left or to the right, and
$p^{R_k}-p^{L_k}-\beta$ joins two roots.  If
$p^{R_k}-p^{L_k}\le\beta$, the new point receives coefficient zero and extends a current root without growing to either directions.

\begin{example}
Consider the ten points $(p^1,\ldots,p^{10})
=(0,1,2,4,6,7,9,11.5,13,15.5),$
and fix $\beta=4$. We show how to obtain a lifting coefficient using~\ref{eq:onedim-neighbor-coefficient}.
Suppose we have already lifted variables $(2,5,8,1,3,6,9,10)$ and correctly obtained a valid inequality $w-z_1-4z_2-z_3-4z_5-z_6-
4z_8-\tfrac32 z_9-\tfrac52 z_{10}\ge -4.$ Now lift the variable $7$. Its nearest
previously lifted neighbors are $L_k=6$ and
$R_k=8$. By~\ref{eq:onedim-neighbor-coefficient},
\[
\begin{aligned}
\alpha_7=\min\bigl\{
  \beta,\,
  p^7-p^6,\,
  p^8-p^7,\,
  (p^8-p^6-\beta)_+
  \bigr\}=\min\{4,2,2.5,0.5\}
=\tfrac12.
\end{aligned}
\]
Finally, lift variable $4$. Its nearest previously
lifted neighbors are $L_k=3$ and $R_k=5$, with
$p^5-p^3=\beta$. Hence $\alpha_4
=\min\{4,2,2,(6-2-4)_+\}=0.$
\end{example}

\begin{lemma}
\label{lem:onedim-neighbor-lifting}
For every $\beta>0$ and every permutation $\pi$, the coefficient
$\alpha^{\beta,\pi}_{\pi_k}$ in
\ref{eq:onedim-neighbor-coefficient} is exactly the sequential lifting
coefficient
\begin{equation}\label{eq:onedim-exact-lifting}
\min_{S\subseteq A_{k-1}}
\left\{
\widthf(S\cup\{\pi_k\})
-
\alpha^{\beta,\pi}(S)
+
\beta
\right\}.
\end{equation}
Consequently, $w-(\alpha^{\beta,\pi})^\top z\ge-\beta$
is valid for $B$.
\end{lemma}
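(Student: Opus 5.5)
We argue by strong induction on $k$. Fix $\beta$ and $\pi$, and write $\alpha:=\alpha^{\beta,\pi}$, $j:=\pi_k$, $A:=A_{k-1}$. The induction hypothesis is that $w-\alpha^\top z\ge-\beta$ restricted to $A$ is valid, i.e.\ $\widthf(S)-\alpha(S)\ge-\beta$ for all $S\subseteq A$. It also includes a structural invariant: the points of $A$, read left to right, split into maximal "blocks" (the current roots of the laminar forest being built). Consecutive blocks are separated by a gap exceeding $\beta$; within a block, $\alpha$ of any sub-interval of consecutive elements of $A$ is controlled by its width plus $\beta$. Once we show the lifting minimum equals the rule, validity follows immediately, since sequential lifting with exact coefficients preserves validity.

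For the upper bound we exhibit witnesses $S\subseteq A$ in \eqref{eq:onedim-exact-lifting}:
\begin{itemize}[label={}]
\item $S=\varnothing$ gives value $\beta$;
\item $S=\{R_k\}$ gives $p^{R_k}-p^{j}-\alpha_{R_k}+\beta$, which is too large in general.
\end{itemize}
So instead we take $S$ to be a tight set of $A$ whose leftmost element is $R_k$, i.e.\ an $S$ with $\widthf(S)-\alpha(S)=-\beta$ and $\ell(S)=p^{R_k}$. Then $\widthf(S\cup\{j\})=\widthf(S)+p^{R_k}-p^j$, so the value is $p^{R_k}-p^j$. Symmetrically, a tight set with rightmost element $L_k$ gives $p^j-p^{L_k}$. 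Taking the union of a tight set ending at $L_k$ and a tight set starting at $R_k$ gives $p^{R_k}-p^{L_k}-\beta$.

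The value $0$ (the $(\cdot)_+$ case) should come from $S\cup\{j\}$ where $S$ is a tight set spanning across $j$. When $p^{R_k}-p^{L_k}\le\beta$, the rule at the time $L_k$ or $R_k$ was processed should have merged them into one tight set. We therefore strengthen the invariant: for every $i\in A$ there are tight sets (within $A$) whose leftmost, respectively rightmost, element is $i$. We must check that this invariant is preserved when $j$ is added, using the same witness sets augmented by $j$. These tight sets are exactly the members of the growing laminar forest, matching the interpretation given after the rule.

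For the lower bound, take any $S\subseteq A$. Discard elements of $S$ outside $[p^{L'},p^{R'}]$, where $L'$ and $R'$ are the extreme elements of $S$. We split by whether $S$ has elements on both sides of $j$:
\begin{itemize}[label={}]
\item If $S$ lies entirely to the right of $j$, then $\widthf(S\cup\{j\})=\widthf(S)+\ell(S)-p^j\ge\widthf(S)+p^{R_k}-p^j$, and validity on $A$ gives value $\ge p^{R_k}-p^j$. The left case is symmetric.
\item If $S$ has elements on both sides, write $S=S_L\cup S_R$. Then $\widthf(S\cup\{j\})=\widthf(S_L)+\widthf(S_R)+\ell(S_R)-u(S_L)$. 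Applying validity to $S_L$ and $S_R$ separately gives value $\ge \ell(S_R)-u(S_L)-\beta\ge p^{R_k}-p^{L_k}-\beta$.
\item The empty $S$ gives $\beta$.
\end{itemize}
Combining with nonnegativity of the value (since $\widthf(S\cup\{j\})\ge\widthf(S)$ and validity on $A$), every value is at least the min in the rule.

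The main obstacle is the upper-bound direction, specifically maintaining the invariant that tight sets with prescribed extreme elements exist, including the zero-coefficient case. We would handle this by proving, jointly in the induction, that after each step every element of $A_k$ is the left endpoint of some tight set and the right endpoint of some tight set. Each case of the rule supplies the new tight sets by appending $j$ to the witness sets above.
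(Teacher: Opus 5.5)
Your lower bound is correct and is essentially the paper's argument. You handle the empty set, the one-sided sets, and the two-sided sets split into left and right parts with validity applied to each side, plus monotonicity for the $(\cdot)_+$ term. The gap is in the upper bound, at exactly the step you flagged. The strengthened invariant you propose is that every $i\in A_{k-1}$ is both the leftmost element of some tight set and the rightmost element of some tight set. This is false, and in fact incompatible with validity. Take $i<i'$ in $A_{k-1}$, with $T$ tight and $u(T)=p^i$, and $T'$ tight and $\ell(T')=p^{i'}$. Then
\[
\widthf(T\cup T')-\alpha(T\cup T')=p^{i'}-p^{i}-2\beta ,
\]
so validity forces $p^{i'}-p^i\ge\beta$. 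Hence the invariant breaks as soon as some point is lifted within distance $\beta$ of an earlier one. Concretely, for $p=(0,1,2)$, $\beta=4$, $\pi=(1,3,2)$, the rule gives $\alpha_1=4$ and $\alpha_3=2$, and no tight subset of $\{1,3\}$ has leftmost element $3$. The same computation shows that your witness for the joining term (a tight set ending at $L_k$ together with one starting at $R_k$) cannot exist when $p^{R_k}-p^{L_k}<\beta$. The zero case is left at the level of ``the rule should have merged them,'' which is never turned into a statement.

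The missing idea is the invariant the paper actually maintains. Read $A_{k-1}$ left to right and split it into maximal runs in which consecutive points are at distance $<\beta$ (the current $\beta$-components). Require each component $C$ to be tight, $\widthf(C)-\alpha(C)=-\beta$. Only component endpoints then need to be extreme elements of tight sets, and these are exactly the ones the upper bound uses:
\begin{itemize}
\item If $L_k$ and $R_k$ lie in different components $C_L$ and $C_R$, then $L_k$ is the right end of $C_L$ and $R_k$ is the left end of $C_R$. So $C_L$, $C_R$ and $C_L\cup C_R$ attain $p^{\pi_k}-p^{L_k}$, $p^{R_k}-p^{\pi_k}$ and $p^{R_k}-p^{L_k}-\beta\ge 0$.
\item If they lie in one component $C$, then $C$ spans $\pi_k$ and attains $0$.
\item If only one neighbor exists, it is an endpoint of its component.
\end{itemize}
Preservation is a short check according to which of $p^{\pi_k}-p^{L_k}$ and $p^{R_k}-p^{\pi_k}$ are below $\beta$. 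The new component containing $\pi_k$ is one of $\{\pi_k\}$, $C_L\cup\{\pi_k\}$, $\{\pi_k\}\cup C_R$, $C_L\cup\{\pi_k\}\cup C_R$, or $C\cup\{\pi_k\}$. In each case the value the rule assigns to $\alpha_{\pi_k}$ makes it tight by a one-line computation. Your initial ``block'' invariant records separation between blocks and validity within them, but not tightness of the blocks, which is the property the argument needs.
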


\begin{proof}
For any integer $k\ge 1$, order the indices in $A_k$ increasingly and partition them whenever two consecutive
selected points are at distance at least $\beta$. The resulting groups will be called the
current $\beta$-components. We first show by induction on $k$ that the lifting following the rules of \eqref{eq:onedim-neighbor-coefficient} preserves the invariant: each of the current $\beta$-component $C$ is tight:
\begin{equation}\label{eq:onedim-component-tight}
\widthf(C)-\alpha^{\beta,\pi}(C)=-\beta.
\end{equation}

This is immediate for $A_1$. Suppose it holds before lifting
$j:=\pi_k$. Consider $A_{k-1}$. Take any $S\subseteq A_{k-1}$. If $S=\varnothing$, the objective in
\eqref{eq:onedim-exact-lifting} equals $\beta$. Now suppose all points of $S$ lie to the left of $j$. By validity of the
current inequality, $\widthf(S)-\alpha^{\beta,\pi}(S)\ge-\beta,$
and therefore $\widthf(S\cup\{j\})
-\alpha^{\beta,\pi}(S)+\beta
\ge
p^j-u(S)
\ge
p^j-p^{L_k}.$
The symmetric argument gives the lower bound
$p^{R_k}-p^j$ when all points of $S$ lie to the right. Finally, suppose $S$ contains points on both sides of $j$. Write
$S^-$ and $S^+$ for its left and right parts. Since $j$ lies between
them,
\[
\widthf(S\cup\{j\})
=
\widthf(S^-)+\widthf(S^+)+\gap(S^-,S^+).
\]
Using validity on $S^-$ and $S^+$ gives
\[
\widthf(S\cup\{j\})
-\alpha^{\beta,\pi}(S)+\beta
\ge
\gap(S^-,S^+)-\beta
\ge
(p^{R_k}-p^{L_k}-\beta)_+.
\]
The first inequality is because adding $j$ to $S$ does not change $\widthf(S)$. The last inequality is by validity.

Thus every feasible $S$ gives a value at least the right-hand side of~
\eqref{eq:onedim-neighbor-coefficient}. Conversely, each term in that
minimum is attained: the value $\beta$ is attained by $S=\varnothing$; the left
distance $p^{\pi_k} - p^{L_k}$ is attained by taking the tight $\beta$-component containing
$L_k$, and the right distance is attained symmetrically. If $L_k$ and
$R_k$ belong to different components, their union attains
$p^{R_k}-p^{L_k}-\beta$; if they belong to the same component, that
component attains zero. Therefore, the~\ref{eq:onedim-neighbor-coefficient} matches the lifting coefficients.

This case analysis also shows that after $j$ is inserted, every new
$\beta$-component continues to satisfy
\eqref{eq:onedim-component-tight}. This completes the induction.
\end{proof}

\subsubsection{Generating facets from $\beta$-blocks}
\label{subsubsec:beta-blocks}

We now give a blockwise description of the facets generated by the
one-dimensional lifting rule.

Fix $\beta\in \text{\ss}$.
Choose indices $b_1<b_2<\cdots<b_m$
satisfying $p^{b_{t+1}}-p^{b_t}\ge\beta,$ for $t=1,\ldots,m-1.$
We call $b_1,\ldots,b_m$ the \emph{$\beta$-leaves}. They induce the
consecutive intervals
\[
[1,b_1],\,
[b_1,b_2],\,
\ldots,\,
[b_{m-1},b_m],\,
[b_m,n].
\]
Adjacent intervals share one $\beta$-leaf. We distinguish
the internal intervals $[b_t,b_{t+1}]$ from the two
exterior intervals $[1,b_1]$ and $[b_m,n]$.

The main idea is to describe the facet coefficients within
each interval through chains (subset) of points. In an \textit{internal
block}, two chains grow inward from the two
$\beta$-leaves. These chains may either remain separated
or be joined by a single point, called the \textit{merger}, giving
rise to a separated block or a merged block, respectively.
In an \textit{exterior block}, only one chain grows outward
from its $\beta$-leaf. The points along these chains
receive positive coefficients determined by their
consecutive coordinate differences, while a merger
receives a coefficient determined by the gap between
the two chains. All remaining points receive coefficient
zero.

We call an interval a \textit{$\beta$-block} if it admits a chain
configuration satisfying the conditions below. We now
specify these conditions and the corresponding facet
coefficients for each block type.

\begin{enumerate}
    \item \textbf{Internal blocks.} Consider interval $I_t=[b_t,b_{t+1}],$ for $t=1,\ldots,m-1.$
$I_t$ is an internal block if there exist two chains:
\begin{equation*}
b_t=r^t_0<r^t_1<\cdots<r^t_{s_t}=c_t
<
d_t=\ell^t_0<\ell^t_1<\cdots<\ell^t_{u_t}=b_{t+1}
\end{equation*}
satisfying
\begin{equation*}
p^{r^t_h}-p^{r^t_{h-1}}<\beta,
\quad h=1,\ldots,s_t,
\qquad
p^{\ell^t_h}-p^{\ell^t_{h-1}}<\beta,
\quad h=1,\ldots,u_t.
\end{equation*}

For a fixed pair of chains satisfying the above conditions,
we distinguish the following two configurations of the chains. If neither configurations hold, the
chosen chains are not admissible for an internal block.

\smallskip
\noindent
\textbf{Separated block.}
The block is separated if either
\begin{equation*}
p^{d_t}-p^{c_t}=\beta,
\end{equation*}
or
\begin{equation*}
d_t=c_t+1
\qquad\text{and}\qquad
p^{d_t}-p^{c_t}>\beta.
\end{equation*}
Its coefficients are 
\begin{equation}\label{eq:separated-block-coefficients}
\alpha_i=
\begin{cases}
p^{r^t_h}-p^{r^t_{h-1}},
    &i=r^t_h,\quad h=1,\ldots,s_t,\\[1mm]
p^{\ell^t_{h+1}}-p^{\ell^t_h},
    &i=\ell^t_h,\quad h=0,\ldots,u_t-1,\\[1mm]
\beta & i = b_t \text{ or } i = b_{t+1},\\[1mm]
0,
    &\text{otherwise in }(b_t,b_{t+1}).
\end{cases}
\end{equation}

\smallskip
\noindent
\textbf{Merged block.}
The block is merged if there exists an index $c_t<\mu_t<d_t$
such that
\begin{equation*}
p^{\mu_t}-p^{c_t}<\beta,
\qquad
p^{d_t}-p^{\mu_t}<\beta,
\qquad
p^{d_t}-p^{c_t}>\beta.
\end{equation*}
The index $\mu_t$ is called the \emph{merger} of the block. Its
coefficients are
\begin{equation}\label{eq:merged-block-coefficients}
\alpha_i=
\begin{cases}
p^{r^t_h}-p^{r^t_{h-1}},
    &i=r^t_h,\quad h=1,\ldots,s_t,\\[1mm]
p^{\ell^t_{h+1}}-p^{\ell^t_h},
    &i=\ell^t_h,\quad h=0,\ldots,u_t-1,\\[1mm]
p^{d_t}-p^{c_t}-\beta,
    &i=\mu_t,\\[1mm]
\beta & i = b_t \text{ or } i = b_{t+1},\\[1mm]
0,
    &\text{otherwise in }(b_t,b_{t+1}).
\end{cases}
\end{equation}
We call $I_t$ a $\beta$-block if it admits at least one
admissible configuration. The chosen chains, and the merger
in the merged case, are part of its description.
\item \textbf{Exterior blocks.} The left exterior block $[1,b_1]$ admits a chain $\ell$: $1=\ell_0<\ell_1<\cdots<\ell_s=b_1$
satisfying $p^{\ell_{h+1}}-p^{\ell_h}<\beta,$ for $h=0,\ldots,s-1$ and its coefficients are
\begin{equation}\label{eq:left-exterior-coefficients}
\alpha_i=
\begin{cases}
p^{\ell_{h+1}}-p^{\ell_h},
    &i=\ell_h,\quad h=0,\ldots,s-1,\\[1mm]
\beta & i = b_1, \\[1mm]
0,
    &\text{otherwise in }[1,b_1).
\end{cases}
\end{equation}

Symmetrically, the right exterior block $[b_m,n]$ admits a chain $r$: $b_m=r_0<r_1<\cdots<r_s=n$
satisfying $p^{r_h}-p^{r_{h-1}}<\beta,$ for $h=1,\ldots,s,$
and its coefficients are 
\begin{equation}\label{eq:right-exterior-coefficients}
    \alpha_i=
\begin{cases}
p^{r_h}-p^{r_{h-1}},
    &i=r_h,\quad h=1,\ldots,s,\\[1mm]
\beta & i = b_{m},\\[1mm]
0,
    &\text{otherwise in }(b_m,n].
\end{cases}
\end{equation}

\end{enumerate}

\begin{remark}
    If an exterior endpoint, $z_1$ or $z_n$, itself has coefficient $\beta$, it is
included among the $\beta$-leaves. In this case, the external block contains only the endpoint.
\end{remark}

\begin{definition}[admissible $\beta$-block system]
\label{def:admissible-beta-block-system}
A collection consisting of $\beta$-leaves $b_1<\cdots<b_m$ and the induced
$\beta$-blocks is called an \emph{admissible $\beta$-block system} if
\begin{enumerate}
    \item $m\ge 2$ and $p^{b_{t+1}}-p^{b_t}\ge\beta$ for $t=1,\ldots,m-1$;
    \item every interval partitioned by the $\beta$-leaves is a $\beta$-block
    satisfying the strict chain and merger conditions above;
    \item at least one internal separated block satisfies $p^{d_t}-p^{c_t}=\beta$.
\end{enumerate}

\end{definition}

\input{Boxing_Beta_Block_Example.tex}

\begin{lemma}
\label{lem:anchor-facet} Let $I_q=[b_q,b_{q+1}]$ be an internal separated
block with $p^{d_q}-p^{c_q}=\beta$.
The inequality $w-\sum_{i\in I_q}\alpha_i z_i\ge-\beta$,
with coefficients given by \eqref{eq:separated-block-coefficients},
defines a facet of $\conv(B[I_q])$.
\end{lemma}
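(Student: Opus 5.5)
The plan is to prove validity by realizing the coefficients \eqref{eq:separated-block-coefficients} as sequential lifting coefficients, and then to exhibit $|I_q|+1$ affinely independent tight points of $B[I_q]$. Throughout, write $b:=b_q$, $b':=b_{q+1}$, $c:=c_q$, $d:=d_q$, with chains $b=r_0<\dots<r_s=c$ and $d=\ell_0<\dots<\ell_u=b'$, so that $p^d-p^c=\beta$. Since $B[I_q]$ is just the Boxing Set of the points in $I_q$, all results of Sections~\ref{sec:basic}--\ref{sec:onedim} apply to it. In particular, by Proposition~\ref{prop:elementary}, $\conv(B[I_q])$ is full-dimensional in the space of $(w,z_{I_q})$, which has dimension $|I_q|+1$.

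\emph{Validity.} I would apply Lemma~\ref{lem:onedim-neighbor-lifting} with an ordering $\pi$ of $I_q$ chosen as follows: $b$ first; then $r_1,\dots,r_s$ in increasing order; then $b'$; then $\ell_{u-1},\ell_{u-2},\dots,\ell_0$ in decreasing order; and finally all remaining (zero) points in any order. I check the \ref{eq:onedim-neighbor-coefficient} step by step.
\begin{itemize}
\item When $r_h$ is inserted, only $L_k=r_{h-1}$ exists. Since $p^{r_h}-p^{r_{h-1}}<\beta$, it receives exactly that gap.
\item When $b'$ is inserted, $L_k=c$, and $p^{b'}-p^c\ge p^d-p^c=\beta$, so it receives $\beta$.
\item When $\ell_h$ is inserted ($h<u$), we have $L_k=c$ and $R_k=\ell_{h+1}$. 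Here $p^{\ell_h}-p^c\ge\beta$ because $\ell_h\ge d$. Also $p^{\ell_{h+1}}-p^c-\beta\ge p^{\ell_{h+1}}-p^{\ell_h}$, since $p^{\ell_h}-p^{c}\ge\beta$. Hence the minimum is $p^{\ell_{h+1}}-p^{\ell_h}<\beta$, as required.
\item A zero point lying strictly between consecutive chain points has $p^{R_k}-p^{L_k}<\beta$. A zero point lying between $c$ and $d$ has $p^{R_k}-p^{L_k}=\beta$. In both cases the last term $(p^{R_k}-p^{L_k}-\beta)_+$ vanishes, so the coefficient is $0$.
\end{itemize}
Thus the lifted vector equals \eqref{eq:separated-block-coefficients}, and the inequality is valid for $B[I_q]$.

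\emph{Tight sets.} Let $P_h:=\{r_0,\dots,r_h\}$ for $h=0,\dots,s$, and $Q_h:=\{\ell_h,\dots,\ell_u\}$ for $h=0,\dots,u$.
\begin{itemize}
\item Each $P_h$ is tight, because the chain coefficients telescope: $\alpha(P_h)=\beta+p^{r_h}-p^{b}=\beta+\widthf(P_h)$. By the same telescoping, each $Q_h$ is tight.
\item $P_s\cup Q_0$ is tight by Lemma~\ref{lem:disjoint-tight-gap}, since $\gap(P_s,Q_0)=p^d-p^c=\beta$.
\item For each zero point $i$, adding $i$ to a tight set whose span already covers $p^i$ changes neither the width nor $\alpha$, so the result is again tight. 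If $i\in(r_{h-1},r_h)$ I take $P_h\cup\{i\}$; if $i\in(\ell_{h-1},\ell_h)$ I take $Q_{h-1}\cup\{i\}$; if $i\in(c,d)$ I take $P_s\cup Q_0\cup\{i\}$.
\end{itemize}
This gives $(s+1)+(u+1)+1+\#\{\text{zero points}\}=|I_q|+1$ tight sets, since the chain points number $s+u+2$.

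\emph{Affine independence.} This is the main obstacle. I would argue via uniqueness, in the spirit of Corollary~\ref{cor:laminar-basis}. Suppose $w-\gamma^\top z\ge-\delta$ is tight at all these points; it suffices to show $(\gamma,\delta)=(\alpha,\beta)$ up to scaling, which forces the points to span a hyperplane.
\begin{itemize}
\item The singletons $P_0=\{b\}$ and $Q_u=\{b'\}$ force $\gamma_{b}=\gamma_{b'}=\delta$.
\item Differencing consecutive tight equations along each chain forces $\gamma_{r_h}=p^{r_h}-p^{r_{h-1}}$ and $\gamma_{\ell_h}=p^{\ell_{h+1}}-p^{\ell_h}$.
\item Differencing each augmented set against its base set forces $\gamma_i=0$ for every zero point $i$.
\item The equation for $P_s\cup Q_0$, minus those for $P_s$ and $Q_0$, gives $p^d-p^c=\delta$, so $\delta=\beta$.
\end{itemize}
The equations therefore determine $(\gamma,\delta)$ uniquely, and the tight points are affinely independent. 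The care needed in this step is in ordering the equations as this triangular system and in checking that every zero point is covered by exactly one augmented set, so that the count matches.
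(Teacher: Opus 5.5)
Your proposal is correct and follows essentially the same route as the paper. You use the same tight supports: the nested prefixes $P_h$ of the left chain, the nested suffixes $Q_h$ of the right chain, the union $P_s\cup Q_0$ made tight by the central gap $\beta$, and each zero-coefficient index appended to a tight support whose span covers it, giving $|I_q|+1$ affinely independent tight points.

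The differences are additions, and both are sound. You establish validity explicitly via the nearest-neighbor lifting lemma, which the paper's proof takes for granted. You also verify affine independence by showing that the tight equations uniquely determine $(\alpha,\beta)$, rather than by the paper's direct incidence-vector argument. One small imprecision: for a zero point in $(c,d)$ you only get $p^{R_k}-p^{L_k}\le\beta$ in general, not equality, but that suffices.
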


\begin{proof}
Let $r$ and $\ell$ be the two chains in the block, with
$b_q=r_0<\cdots<r_s=c_q<d_q=\ell_0<\cdots<\ell_u=b_{q+1}$.
For $h=0,\ldots,s$, the support $\{r_0,\ldots,r_h\}$ is tight, since
\[
\sum_{v=0}^h\alpha_{r_v}=\beta+p^{r_h}-p^{b_q}
=\beta+\widthf(\{r_0,\ldots,r_h\}).
\]
Similarly, $\{\ell_h,\ldots,\ell_u\}$ is tight for $h=0,\ldots,u$.
Since $p^{d_q}-p^{c_q}=\beta$, the union of the two full chains is
also tight.

These $s+u+3$ tight points are affinely independent in the coordinates
corresponding to the chain variables: their incidence vectors contain
two independent nested chains, and the incidence vector of their union
is the sum of the two full-chain incidence vectors, rather than an
affine combination of them.
For every remaining index $i\in I_q$, the point $p^i$ lies between two
consecutive chain points or between $c_q$ and $d_q$. There is therefore
a tight support containing the corresponding two endpoints. Adding $i$
to this support does not change its width or coefficient sum, and gives
an additional independent tight direction $e_i$.
Hence the tight face contains $|I_q|+1$ affinely independent points,
proving facetness.
\end{proof}

\begin{corollary}\label{cor:block-validity}
    Let $I$ be a $\beta$-block from an admissible $\beta$-block system. Then the inequality $w-\sum_{i\in I}\alpha_i z_i \ge -\beta$, with coefficients $\alpha_i$ specified by its corresponding block type, is valid.
\end{corollary}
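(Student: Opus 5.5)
The plan is to realize the block coefficients as the output of sequential lifting and then invoke Lemma~\ref{lem:onedim-neighbor-lifting}, which guarantees that any coefficient vector produced by the \ref{eq:onedim-neighbor-coefficient} is valid for $B$. Restricting to the block $I$ (that is, to $B[I]$, with all variables outside $I$ fixed at zero) corresponds to lifting only the indices of $I$. If we exhibit a permutation $\pi$ of $I$ such that the nearest-neighbor rule reproduces exactly the $\alpha_i$ of the block type, validity of $w-\sum_{i\in I}\alpha_i z_i\ge-\beta$ on $B[I]$ follows. Since $z_i=0$ for $i\notin I$ in the restricted statement, this is all that is needed. Alternatively, one can lift the remaining variables afterwards with nonnegative coefficients; this only strengthens the inequality.

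\textbf{Internal blocks.} For a block $[b_t,b_{t+1}]$ I would use the following order. Insert $b_t$ and $b_{t+1}$ first; each receives $\beta$, since $p^{b_{t+1}}-p^{b_t}\ge\beta$ and the min contains the one-sided distance, which is $\ge\beta$. Next insert the left chain $r_1,\dots,r_s$ in increasing order. When $r_h$ is inserted, its neighbors are $L=r_{h-1}$ and $R=b_{t+1}$. The rule gives $\min\{\beta,\,p^{r_h}-p^{r_{h-1}},\,p^{b_{t+1}}-p^{r_h},\,(p^{b_{t+1}}-p^{r_{h-1}}-\beta)_+\}$, and this must equal $p^{r_h}-p^{r_{h-1}}$.

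This comparison is the main obstacle, because the far right endpoint can make the last two terms small. To avoid it, I would instead insert the two chains in an interleaved fashion that keeps the right neighbor close. A cleaner option is to build each chain outward with the opposite side absent: insert $b_t,r_1,\dots,r_s$ before $b_{t+1}$ and the $\ell$-chain. Then $R$ does not exist, and the third case of the rule gives $\min\{\beta,p^{r_h}-p^{r_{h-1}}\}=p^{r_h}-p^{r_{h-1}}$ by the strict chain condition.

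Symmetrically, insert $b_{t+1}$ and then $\ell_{u-1},\dots,\ell_0$ in decreasing order. The issue is that $b_{t+1}$ now has left neighbor $c_t$, so we need $p^{b_{t+1}}-p^{c_t}\ge\beta$. This holds because $p^{d_t}-p^{c_t}\ge\beta$ in the first separated case, and in the other cases because $p^{d_t}-p^{c_t}>\beta$.

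For $\ell_h$, the left neighbor is $c_t$ and the right neighbor is $\ell_{h+1}$. Write $x=p^{\ell_h}-p^{c_t}$ and $y=p^{\ell_{h+1}}-p^{\ell_h}<\beta$. The candidate terms are $\beta$, $x$, $y$, and $(x+y-\beta)_+$. Since $x\ge p^{d_t}-p^{c_t}\ge\beta$, we get $x+y-\beta\ge y$, so the minimum is $y$, as required.

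\textbf{Mergers and zero coefficients.} In the merged case, insert $\mu_t$ after both chains. Its neighbors are $c_t$ and $d_t$, and the conditions $p^{\mu_t}-p^{c_t}<\beta$, $p^{d_t}-p^{\mu_t}<\beta$ and $p^{d_t}-p^{c_t}>\beta$ make the minimum equal to $p^{d_t}-p^{c_t}-\beta$. This step needs a check that $p^{d_t}-p^{c_t}-\beta$ is no larger than the two one-sided distances, which follows from $p^{d_t}-p^{\mu_t}<\beta$ and $p^{\mu_t}-p^{c_t}<\beta$.

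Finally, insert all remaining points last. Each such point lies between consecutive inserted points $L<R$ with $p^R-p^L<\beta$ (inside a chain, or on either side of a merger), or between $c_t$ and $d_t$ when $p^{d_t}-p^{c_t}=\beta$. In every such case the term $(p^R-p^L-\beta)_+$ equals $0$, so the point receives coefficient $0$.

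The one remaining case is $d_t=c_t+1$ with $p^{d_t}-p^{c_t}>\beta$ and no merger. Here there are no intermediate points between $c_t$ and $d_t$, so nothing further is inserted there.

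\textbf{Exterior blocks.} These are the one-sided special case of the same argument: insert $b_1$, then $\ell_{s-1},\dots,\ell_0$ outward, then the zeros. The same computation applies.
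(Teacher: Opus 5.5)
Your proposal is correct and follows the paper's own route. Like the paper, you reproduce the block coefficients by nearest-neighbor sequential lifting along each chain's growing direction, then the merger, then the zero-coefficient points, and obtain validity from Lemma~\ref{lem:onedim-neighbor-lifting}. (Your worry about inserting both leaves first is unfounded: since $r_h\le c_t$ and $b_{t+1}\ge d_t$, you have $p^{b_{t+1}}-p^{r_h}\ge p^{d_t}-p^{c_t}\ge\beta$, so that order, which the paper uses in the proof of Theorem~\ref{prop:beta-block-facets}, works by the same estimate you already use for the $\ell$-chain.)
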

Thus every non-anchor internal $\beta$-block consists of a chain growing
from its left $\beta$-leaf, a chain growing from its right $\beta$-leaf,
and possibly one merger joining the two chains.  An internal separated block with central gap $\beta$ may be chosen as an anchor block for proving facetness. The same case analysis as Lemma~\ref{lem:onedim-neighbor-lifting} shows that each $\beta$-block coefficient is obtained by sequentially lifting each chain along its growing direction, followed by lifting the remaining zero-coefficient variables in the $\beta$-block.

\begin{proposition}
\label{lem:beta-block-uniqueness}
The coefficient vector $\alpha$ of an admissible $\beta$-block
system uniquely determines its $\beta$-leaves, the chains in every block,
and the merger, if present.
\end{proposition}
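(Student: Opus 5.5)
The plan is to read every structural ingredient directly off $\alpha$ (with $\beta$ fixed), proceeding from the leaves inward and using only the strict inequalities in the block definitions.

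\textbf{Step 1: the $\beta$-leaves.} Every chain coefficient is a consecutive difference $p^{r_h}-p^{r_{h-1}}<\beta$ (or $p^{\ell_{h+1}}-p^{\ell_h}<\beta$). A merger coefficient satisfies
\[
p^{d_t}-p^{c_t}-\beta<\bigl(p^{\mu_t}-p^{c_t}\bigr)+\bigl(p^{d_t}-p^{\mu_t}\bigr)-\beta<\beta .
\]
Zero coefficients are below $\beta$ as well. Hence $\{b_1,\ldots,b_m\}=\{i:\alpha_i=\beta\}$, which is consistent with the Remark on exterior endpoints. The leaves therefore fix the partition into intervals.

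\textbf{Step 2: the chains.} Work inside one block, say $I_t=[b_t,b_{t+1}]$. The left chain is recovered greedily: $r_0=b_t$, and $r_h$ is the smallest index $i>r_{h-1}$ in the block with $\alpha_i=p^{i}-p^{r_{h-1}}$ and $\alpha_i>0$. The right chain is recovered symmetrically from $b_{t+1}$, using $\alpha_{\ell_h}=p^{\ell_{h+1}}-p^{\ell_h}$. Exterior blocks are handled the same way with a single chain. The main obstacle is showing that this greedy rule cannot be fooled. A zero-coefficient point never qualifies, since distinct points give positive differences. The danger is that a right-chain point or the merger $\mu$ accidentally satisfies $\alpha_i=p^i-p^{r_{h-1}}$.

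To rule this out, I will first compute the sums of coefficients and compare them with widths. For the full left chain, $\sum_{v\le s}\alpha_{r_v}=\beta+p^{c_t}-p^{b_t}$, and similarly for the right chain. Then I will argue by contradiction: suppose two configurations $(r,\ell,\mu)$ and $(r',\ell',\mu')$ give the same $\alpha$. Take the first index where the chain descriptions differ. That index is a point $i$ which lies in one configuration's left chain but in the other's right chain or merger, with positive coefficient in both.

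\textbf{Step 3: closing the obstacle.} This is the step I expect to be hardest. I would first try a validity argument on the tight sets, in the spirit of Lemma~\ref{lem:onedim-neighbor-lifting}. In configuration 1, the prefix chain up to $i$ is tight. Evaluating the configuration-2 tight sets against the same $\alpha$ must respect validity. The plan is to show that these two requirements force $p^{d}-p^{c}<\beta$ or a violated inequality $\widthf(S)-\alpha(S)<-\beta$ for some mixed support $S$, which is a contradiction. If this validity argument does not go through cleanly, I would fall back on the lifting viewpoint: lifting the left chain from $b_t$ in increasing order reproduces the tight components, so the coefficient pattern determines the component containing $b_t$.

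\textbf{Step 4: separated versus merged, and the merger.} Once the chains are fixed, the remaining positive-coefficient point in the gap $(c_t,d_t)$, if there is one, is the merger. The block is separated exactly when no such point exists. Finally, $\mu_t$ is certified by the identity $\alpha_{\mu_t}=p^{d_t}-p^{c_t}-\beta>0$.
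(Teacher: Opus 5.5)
Your Steps 1 and 4 are fine (in Step 1 the first ``$<$'' should be ``$=$''). The gap is Step 3, which you flag as the hardest step. All of the proposition's content sits there, and you leave it as a plan, either a validity argument on tight sets or a lifting argument, rather than a proof. Neither detour is needed. The missing idea is a direct comparison using the strict block inequalities.

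Suppose the recovery of the left chain has reached $c_t$, and take any interior index $i>c_t$ with $\alpha_i>0$. There are two cases.
\begin{itemize}
\item If $i=\mu_t$ is the merger, then $\alpha_{\mu_t}=p^{d_t}-p^{c_t}-\beta<p^{\mu_t}-p^{c_t}$, because $p^{d_t}-p^{\mu_t}<\beta$.
\item If $i=\ell^t_h$ lies on the opposite chain, then $\alpha_i<\beta\le p^{d_t}-p^{c_t}\le p^{i}-p^{c_t}$. This uses the fact that every internal block, separated or merged, has $p^{d_t}-p^{c_t}\ge\beta$, and that $p^{\ell^t_h}\ge p^{d_t}$.
\end{itemize}
So no such $i$ satisfies $\alpha_i=p^{i}-p^{c_t}$. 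Indices strictly between consecutive left-chain points have coefficient zero, so the chain points themselves are found correctly.

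Hence your greedy rule, which depends only on $\alpha$, returns exactly $r^t_0,\ldots,r^t_{s_t}$ for every configuration producing $\alpha$. The right chain follows by the symmetric check: there $\alpha_{\mu_t}<p^{d_t}-p^{\mu_t}$ is equivalent to $p^{\mu_t}-p^{c_t}<\beta$. This is essentially the paper's argument; the paper stops at the first positive index that fails the equality.

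The same two inequalities also finish your two-configuration contradiction at once. The first disputed index is the merger or the point $d_t$ in one configuration, yet it satisfies the chain equality from the common predecessor in the other configuration, which the inequalities forbid.

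There is also a smaller slip in your rule as stated. It searches ``in the block'', which includes the leaf $b_{t+1}$. In a separated block with $d_t=b_{t+1}$ and $p^{d_t}-p^{c_t}=\beta$ you get $\alpha_{b_{t+1}}=\beta=p^{b_{t+1}}-p^{c_t}$, so the rule would wrongly append the leaf to the left chain. Restrict the search to interior indices, or equivalently require $\alpha_i<\beta$.
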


\begin{proof}
Since every chain gap is strictly less than $\beta$, and the merger
coefficient is strictly between $0$ and $\beta$, the $\beta$-leaves are
exactly the indices with $\alpha_i=\beta$. Thus the blocks are uniquely
identified.

Fix an internal block $[b_t,b_{t+1}]$. List its positive-coefficient
interior indices increasingly. Starting at $r_0=b_t$, recover the right
growing chain by taking successive positive indices $r_h$ as long as
$\alpha_{r_h}=p^{r_h}-p^{r_{h-1}}$.
Every index on that chain satisfies this equality. This chain stops at the first index with positive coefficient that fails this equality. In a separated block, this index belongs to the opposite chain, and its coefficient is less
than $\beta\le p^{d_t}-p^{c_t}$. In a merged block this
index is $\mu_t$, and the strict merger condition gives $\alpha_{\mu_t}=p^{d_t}-p^{c_t}-\beta
<p^{\mu_t}-p^{c_t}.$
Hence the right growing chain is uniquely recovered. Symmetrically,
starting at $b_{t+1}$ and moving left recovers the other chain. Any
positive interior index remaining between them is the unique merger.
If none remains, the block is separated. The exterior chains are recovered similarly, starting at their $\beta$-leaves and moving outward. This proves uniqueness.
\end{proof}

\begin{algorithm}[]
\caption{One-dimensional facet generation}
\label{alg:onedim-facet-generation}
\begin{algorithmic}[1]
\Require Distinct coordinates $p^1<\cdots<p^n$.
\For{each distinct $\beta\in$ \ss}
    \For{each admissible $\beta$-block system}
        \State Set $\alpha_{b_t}\gets\beta$ for every $\beta$-leaf $b_t$.
        \For{each internal $\beta$-block $I_t$}
            \If{$I_t$ is a separated block}
                \State Assign its coefficients using
                \eqref{eq:separated-block-coefficients}.
            \Else
                \State Assign its coefficients using
                \eqref{eq:merged-block-coefficients}.
            \EndIf
        \EndFor
        \State Assign the two exterior-block coefficients using
        \eqref{eq:left-exterior-coefficients} and
        \eqref{eq:right-exterior-coefficients}.
        \State Output $w-\alpha^\top z\ge-\beta$.
    \EndFor
\EndFor
\end{algorithmic}
\end{algorithm}
\begin{theorem}\label{prop:beta-block-facets}
Every inequality output by Algorithm~\ref{alg:onedim-facet-generation}
is facet-defining for $\conv(B)$.
\end{theorem}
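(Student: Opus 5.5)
To prove Theorem~\ref{prop:beta-block-facets}, we need two things: validity of the output inequality on all of $B$, and $n+1$ affinely independent tight points.

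\textbf{Validity.} I would exhibit the output $\alpha$ as the result of the nearest-neighbor lifting rule for a suitable permutation $\pi$, and then invoke Lemma~\ref{lem:onedim-neighbor-lifting}. The order of $\pi$ is as follows.
\begin{enumerate}
\item Lift all $\beta$-leaves first. Since consecutive leaves are at distance at least $\beta$, each leaf has neighbors at distance $\ge\beta$ and so receives $\min\{\beta,\dots,(p^{R}-p^{L}-\beta)_+\}$. I need to check this equals $\beta$. It does, because the neighbor distances are $\ge\beta$ and the joining term needs care: when a leaf is inserted between two previously inserted leaves, one has $p^R-p^L-\beta\ge\beta$.
\item In each internal block, grow the chain $r^t$ rightwards from $b_t$. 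Its nearest left neighbor is the previous chain point at distance $<\beta$. Its nearest right neighbor is either $b_{t+1}$ or the already-grown opposite chain; I will grow both chains alternately or one after the other. Each step gives the coefficient $p^{r_h}-p^{r_{h-1}}$, provided the other three terms are not smaller. This is the delicate check: the right distance and the joining term must dominate. I expect this to follow from lifting the $\ell$-chain \emph{after} the $r$-chain, so that the right neighbor of $r_h$ is $b_{t+1}$.
\item Lift the $\ell$-chain leftwards from $b_{t+1}$ in the same way.
\item Lift the merger, if present. Its neighbors are $c_t$ and $d_t$, which lie in different $\beta$-components, so it receives $\min\{\beta,p^{\mu}-p^{c},p^{d}-p^{\mu},p^d-p^c-\beta\}=p^d-p^c-\beta$ by the strict merger conditions.
\item Lift all remaining points last. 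Each now lies between two consecutive points of one $\beta$-component, so it gets coefficient $0$. The exterior chains are handled symmetrically.
\end{enumerate}
The main obstacle is the bookkeeping in step 2: confirming that the minimum is attained by the chain distance, using the fact that all chain gaps are $<\beta$. This is where the order of lifting within a block matters.

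\textbf{Facetness.} Following the argument of Lemma~\ref{lem:anchor-facet}, I would build a laminar family of $n+1$ tight sets with affinely independent incidence vectors:
\begin{itemize}
\item all chain prefixes from each $\beta$-leaf;
\item for merged blocks, the union of both full chains together with $\mu_t$, which is tight because the merger coefficient equals the gap minus $\beta$;
\item for each zero-coefficient point, one tight set spanning it with the point added;
\item for the anchor block required by condition 3 of Definition~\ref{def:admissible-beta-block-system}, the union of its two full chains, which is tight because $p^{d_q}-p^{c_q}=\beta$.
\end{itemize}
Counting, every index contributes one tight set, for $n$ in total, and the anchor union supplies the $(n+1)$-st.

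Affine independence holds because the family is laminar with at most one new element per node (cf.\ Lemma~\ref{lem:one-new-element}) plus one extra node introducing nothing. The matrix of incidence vectors with the $w$-column is therefore triangular up to the anchor row. The anchor row is independent because its $w$-value is $\widthf$ of the union, which breaks the additive relation, as in Lemma~\ref{lem:anchor-facet}. Alternatively, the same structure can be argued by uniqueness of the solution $(\alpha,\beta)$ of the tight equations. This is the cleaner route: the chain prefixes determine $\alpha$ on chains given $\beta$, the anchor union fixes $\beta$, and the zero-coefficient sets fix the rest.

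Hence the tight face has dimension $n$, and the inequality is facet-defining for $\conv(B)$.
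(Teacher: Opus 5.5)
Your proof is correct, but it is organized differently from the paper's.

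\textbf{The paper's route.} It takes Lemma~\ref{lem:anchor-facet} as a base case: the anchor-block inequality is a facet of $\conv(B[I_q])$. It then lifts the remaining leaves, chain points, mergers and zero-coefficient variables in a fixed order, and uses the nearest-neighbor rule to check that each displayed coefficient is an exact lifting coefficient. Finally it invokes the general principle that exact sequential lifting of a facet of a restriction yields a facet, so validity and facetness come out together.

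\textbf{Your route.} You separate the two. Validity is a direct application of Lemma~\ref{lem:onedim-neighbor-lifting} to your permutation, starting from $w\ge-\beta$. Facetness is certified by an explicit list of $n+1$ tight sets. Each of your tight sets, minus its new index, is a minimizer of the corresponding lifting problem, so the skeleton is the same. Your version buys self-containment: there is no appeal to the lifting-preserves-facetness principle, and the anchor block is one extra row rather than a separate lemma. The paper's version is shorter and fits its sequential-lifting narrative.

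\textbf{Points to tighten.}
\begin{itemize}
\item The ``delicate check'' in step~2 closes because $p^{d_t}-p^{c_t}\ge\beta$ in both separated and merged blocks. Hence the right distance and the joining term are never below the chain gap, and the relative order of the two chains is irrelevant.
\item Your family is not laminar in general. The $\ell$-chain of $[b_{t-1},b_t]$ and the $r$-chain of $[b_t,b_{t+1}]$ share $b_t$, and their prefixes cross. This is harmless, because triangularity only needs an order in which each non-anchor set adds exactly one new index. Subtracting the two full-chain rows from the anchor row then leaves $(\beta,0,\ldots,0)$. So the $(n+1)\times(n+1)$ matrix with rows $(\widthf(S),\ind^S)$ is nonsingular, and linear independence implies affine independence. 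This is the rigorous form of your remark about the $w$-value.
\item Take a non-anchor separated block with $p^{d_t}-p^{c_t}=\beta$. A point strictly between $c_t$ and $d_t$ is not inside a single $\beta$-component. It still receives $0$, since $(p^{d_t}-p^{c_t}-\beta)_+=0$. Its tight set is the union of that block's two full chains plus the point.
\end{itemize}
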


\begin{proof}
By Definition~\ref{def:admissible-beta-block-system}, at least one internal
block has $p^{d_t}-p^{c_t}=\beta$. Choose this block as the anchor block.
By Lemma~\ref{lem:anchor-facet}, the inequality restricted to the
anchor block is facet-defining.

Consider the following order for the remaining variables. First add the
remaining $\beta$-leaves, moving outward from the anchor block. Then, block
by block, add the variables on each chain from its $\beta$-leaf toward
the center of the block, followed by the merger if present. Finally,
add the zero-coefficient variables.

The~\ref{eq:onedim-neighbor-coefficient}
shows that the $\beta$-leaves receive coefficient $\beta$.
Each chain variable receives its consecutive coordinate difference:
before the merger, the two facing chain endpoints are separated by at
least $\beta$, so the terms involving the opposite chain cannot reduce
this coefficient. A merger receives $p^{d_t}-p^{c_t}-\beta$ by the strict
merger conditions. Every remaining variable lies between already
inserted neighbors at distance at most $\beta$, and hence receives
coefficient zero. Thus every displayed coefficient is an exact
sequential lifting coefficient.
Exact sequential lifting from a local facet preserves facetness, so the
final inequality is facet-defining for $\conv(B)$.
\end{proof}

\subsection{Complete convex hull description}
For this section, we fix an arbitrary nontrivial facet $w-\alpha^\top z\ge-\beta$ of $\conv(B)$. Let $\mathcal L$ be an
inclusion-wise maximal laminar family of its tight sets and $\mathcal F$ be its associated laminar tree. We prove the following lemmas on the structure of the forest $\mathcal F$. A node in $\mathcal F$ is a set in $\mathcal L$ and vice versa. In this section, we refer to a set in $\mathcal L$ and node in $\mathcal F$ interchangeably. Let us begin by exploiting structures on $\mathcal F$.

\subsubsection{Structure of $\mathcal F$}
The following proposition gives a concrete structure on $\mathcal F$. It says that the laminar tree associated to any nontrivial facet is binary. This is the key structure used to prove the completeness of the facet generation procedure, Algorithm~\ref{alg:onedim-facet-generation}. 
\begin{proposition}[One-dimensional node classification]\label{prop:one-d-node-classification}
Every node $S'\in\mathcal F$ is exactly one of the following types.
\begin{enumerate}[label=\textnormal{Type \arabic*.},leftmargin=3.2em]
 \item $S'=\{j\}$, in which case $\alpha_j=\beta$.
 \item $S'=S\cup\{j\}$, where $S$ is the unique child of $S'$, and $\alpha_j$ is given by Lemma~\ref{lem:one-child}.
 \item $S'=S_1\cup S_2\cup\{j\}$, where $S_1\in \mathcal F$ and $S_2\in \mathcal F$ are the two children of $S^{\prime}$, and they satisfy $S_1\prec\{j\}\prec S_2$. $\alpha_j=\operatorname{gap}(S_1,S_2)-\beta.$
 \item $S'=S_1\cup S_2$, where $S_1\in \mathcal F$ and $S_2\in \mathcal F$ are the two children of $S^{\prime}$ and they satisfy $S_1\prec S_2$. $\operatorname{gap}(S_1,S_2)=\beta.$
\end{enumerate}
\end{proposition}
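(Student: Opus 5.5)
The plan is to classify each node $S'\in\mathcal F$ by two numbers: its number of children $k$, and whether it introduces an element. By Lemma~\ref{lem:one-new-element} at most one element is introduced. Leaves are singletons by Lemma~\ref{lem:leaf-singleton}, and they carry $\alpha_j=\beta$ by Lemma~\ref{lem:singleton-alpha}; this is Type~1. A node with exactly one child $S$ must introduce an element $j$, since otherwise $S'=S$. That is Type~2, with $\alpha_j$ read off from tightness of $S$ and $S'$ as in the (forthcoming) Lemma~\ref{lem:one-child}. The remaining work is for $k\ge2$, where I must show $k=2$ and pin down the position of $j$.

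The basic tool is a \emph{maximality argument}. Suppose $X\subseteq S'$ is a union of some of the children of $S'$, possibly together with the introduced element $j$. Every member of $\mathcal L$ is contained in a child, is disjoint from $S'$, or contains $S'$. Hence such an $X$ is compatible with all of $\mathcal L$. So if $X$ is tight and differs from $S'$ and from every child, maximality of $\mathcal L$ puts $X$ into $\mathcal L$ strictly between a child and $S'$, a contradiction. By Lemma~\ref{lem:disjoint-tight-gap}, the children can be ordered $S_1\prec\cdots\prec S_k$ with all consecutive gaps at least $\beta$.

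\emph{Case with no introduced element.} This is exactly the computation in the proof of Lemma~\ref{lem:beta-pair-distance}: tightness of $S'$ and of the $S_q$ forces every gap $\operatorname{gap}(S_q,S_{q+1})=\beta$. Then $S_1\cup S_2$ is tight by Lemma~\ref{lem:disjoint-tight-gap}. If $k\ge3$ this contradicts the maximality argument, so $k=2$, which is Type~4.

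\emph{Case with an introduced element $j$.} I locate $p^j$ relative to the children's intervals, in three subcases.
\begin{itemize}
\item If $p^j$ lies inside the interval $[\ell(S_q),u(S_q)]$, then $\widthf(S_q\cup\{j\})=\widthf(S_q)$. Validity then gives $-\beta-\alpha_j\ge-\beta$, so $\alpha_j=0$ and $S_q\cup\{j\}$ is tight, which the maximality argument rules out.
\item If $p^j$ lies left of $S_1$ (the right side is symmetric), let $E:=\sum_q\operatorname{gap}(S_q,S_{q+1})-(k-1)\beta$. Validity at $\bigcup_q S_q$ gives $E\ge0$. Tightness of $S'$ gives $\alpha_j=\ell(S_1)-p^j+E$. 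Validity at $S_1\cup\{j\}$ gives $\alpha_j\le\ell(S_1)-p^j$. Hence $E=0$ and $S_1\cup\{j\}$ is tight, again contradicting maximality since $k\ge2$.
\item Otherwise $j$ lies between $S_q$ and $S_{q+1}$. Tightness of $S'$ gives $\alpha_j=\sum_{r}\operatorname{gap}(S_r,S_{r+1})-(k-1)\beta$. Validity at $S_q\cup\{j\}\cup S_{q+1}$ gives $\alpha_j\le\operatorname{gap}(S_q,S_{q+1})-\beta$. Since every other gap is at least $\beta$, all of them equal $\beta$ and $S_q\cup\{j\}\cup S_{q+1}$ is tight. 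If $k\ge3$, either that union or some adjacent pair with gap exactly $\beta$ is tight and contradicts maximality. Hence $k=2$, $S_1\prec\{j\}\prec S_2$, and $\alpha_j=\operatorname{gap}(S_1,S_2)-\beta$, which is Type~3.
\end{itemize}

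Exclusivity of the four types is immediate from counting children and introduced elements. I expect the main obstacle to be the subcase bookkeeping for $j$, especially ruling out $p^j$ outside or inside a child interval. I would also need to make sure the maximality argument really applies to sets containing $j$; this holds because no member of $\mathcal L$ meets $S'$ without lying in a child or containing $S'$.
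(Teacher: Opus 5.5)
Your proof is correct. It shares the paper's framework: classify each node by its number of children and by whether it introduces an element, and use the fact that any union of children of $S'$, possibly together with $j$, is compatible with $\mathcal L$. Such a union therefore cannot be tight unless it equals $S'$ or a child.

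Your no-new-element case is essentially the contrapositive of the paper's argument via Lemma~\ref{lem:many-children-union}. The paper shows that adjacent unions of children are not tight, so all gaps exceed $\beta$, so the union of $k\ge2$ children is not tight. You show that tightness of $S'$ forces all gaps to equal $\beta$, so some adjacent pair is tight.

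The genuine difference is in the case with an introduced element and $k\ge2$.
\begin{itemize}
\item The paper first derives the strict bounds $\gap(S_i,S_{i+1})>\beta$. It then tests validity on $S'\setminus S_1$ and $S'\setminus S_k$. Two symmetric computations give $u(S_{k-1})<p^j<\ell(S_2)$, which yields both $k\le2$ and the placement $S_1\prec\{j\}\prec S_2$ at once, without locating $p^j$ in advance.
\item You instead split by the position of $p^j$ and test validity on sets obtained by adding $j$ to one or two children: $S_q\cup\{j\}$, $S_1\cup\{j\}$, or $S_q\cup\{j\}\cup S_{q+1}$. Each time this forces some compatible intermediate set to be tight.
\end{itemize}
Your route costs three subcases and an appeal to $\alpha\ge0$ from Proposition~\ref{prop:normalization} in the inside-a-child subcase. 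In exchange, it needs only the non-strict gap bound from Lemma~\ref{lem:disjoint-tight-gap}, and each subcase is a one-line computation. The paper's deletion argument is more economical.
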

We prove the Proposition assuming Lemma~\ref{lem:binary-node-with-new-element} to Lemma~\ref{lem:one-child}. Proofs of these lemmas are presented in Appendix~\ref{apex:onedim}.

\input{Boxing_Corrected_Laminar_Tree_Example.tex}

\begin{restatable}{lemma}{binaryNodeWithNewElement}\label{lem:binary-node-with-new-element}
    Let $S_1\prec S_2\prec\cdots\prec S_k$
be the children of a node $S'\in\mathcal F$, and suppose $S'
=
\left(\bigcup_{i=1}^k S_i\right)\cup\{j\}.$
Then $k\le2$. If $k=2$, then $S_1\prec\{j\}\prec S_2$ and $\alpha_j=\gap(S_1,S_2)-\beta.$
Moreover, in the latter case, $p^j-u(S_1)<\beta,$ and $\ell(S_2)-p^j<\beta.$
\end{restatable}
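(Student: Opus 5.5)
We work with the children $S_1\prec\cdots\prec S_k$ of $S'$. By laminarity they are pairwise disjoint. By Lemma~\ref{lem:disjoint-tight-gap} their coordinate intervals are disjoint, so the left-to-right order is well defined and every consecutive gap satisfies $\gap(S_q,S_{q+1})\ge\beta$.

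\textbf{Basic identity.} Tightness of $S'$ and of each $S_q$, i.e.\ $\alpha(S_q)=\widthf(S_q)+\beta$, gives
\[
\alpha_j=\widthf(S')-\sum_{q=1}^k\widthf(S_q)-(k-1)\beta .
\]
Every other constraint will come from validity of the facet at suitable subsets of $S'$. We also use one structural tool throughout. Suppose a tight set $T$ is the union of some of the $S_q$, possibly together with $j$, and $T\ne S'$ and $T\ne S_q$ for all $q$. Then $T$ is compatible with every member of $\mathcal L$: every member is contained in some child, is disjoint from $S'$, or contains $S'$. By maximality $T\in\mathcal L$, and $T$ lies strictly between $S'$ and some child, contradicting the definition of the children. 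We call this the \emph{intermediate-set contradiction}.

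\textbf{Case analysis on the position of $p^j$.} Assume $k\ge2$; the claim is trivial for $k\le1$.

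(a) $p^j$ lies inside the interval $[\ell(S_q),u(S_q)]$ of some child. Then $\widthf(S_q\cup\{j\})=\widthf(S_q)$. Validity at $S_q\cup\{j\}$ gives $-\beta-\alpha_j\ge-\beta$, so $\alpha_j=0$ and $S_q\cup\{j\}$ is tight. Since $k\ge2$, this set differs from $S'$, which yields the intermediate-set contradiction.

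(b) $p^j$ lies to the left of $S_1$ (the right side is symmetric). The identity gives
\[
\alpha_j=(\ell(S_1)-p^j)+\sum_{q}\bigl(\gap(S_q,S_{q+1})-\beta\bigr).
\]
Validity at $\{j\}\cup S_1$ gives $\alpha_j\le\ell(S_1)-p^j$. Since every summand $\gap(S_q,S_{q+1})-\beta$ is nonnegative, all of them vanish, so $\gap(S_1,S_2)=\beta$. Then $S_1\cup S_2$ is tight by Lemma~\ref{lem:disjoint-tight-gap}. This set is not $S'$ because it misses $j$, so the intermediate-set contradiction applies.

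(c) $S_q\prec\{j\}\prec S_{q+1}$ for some $q$. The identity gives
\[
\alpha_j=\sum_{r=1}^{k-1}\gap(S_r,S_{r+1})-(k-1)\beta .
\]
Validity at $S_q\cup\{j\}\cup S_{q+1}$ gives $\alpha_j\le\gap(S_q,S_{q+1})-\beta$. Hence every other gap equals $\beta$. If $k\ge3$, some other adjacent pair then has union tight by Lemma~\ref{lem:disjoint-tight-gap}, again a contradiction. Therefore $k=2$, $S_1\prec\{j\}\prec S_2$, and $\alpha_j=\gap(S_1,S_2)-\beta$.

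\textbf{Strict inequalities.} Suppose $p^j-u(S_1)\ge\beta$. Write
\[
\alpha_j=(p^j-u(S_1)-\beta)+(\ell(S_2)-p^j).
\]
Validity at $\{j\}\cup S_2$ gives $\alpha_j\le\ell(S_2)-p^j$. This forces $p^j-u(S_1)=\beta$ and makes $\{j\}\cup S_2$ tight, which is the intermediate-set contradiction. The inequality $\ell(S_2)-p^j<\beta$ follows symmetrically.

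The main delicacy is the compatibility claim behind the intermediate-set contradiction. It relies on every member of $\mathcal L$ that is properly contained in $S'$ lying inside a single child, which holds because the children are the maximal proper subsets of $S'$ in $\mathcal L$ and $\mathcal L$ is laminar.
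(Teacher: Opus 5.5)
Your proof is correct, but it is organized differently from the paper's. The paper does the maximality argument first. It shows that no union $S_i\cup S_{i+1}$ of consecutive children is tight, so by Lemma~\ref{lem:disjoint-tight-gap} every consecutive gap is strictly larger than $\beta$. It then tests validity on the large set $T=S'\setminus S_1$. If $p^j\ge\ell(S_2)$, deleting $S_1$ lowers the width by exactly $\widthf(S_1)+\gap(S_1,S_2)$, so $\widthf(T)-\alpha(T)=-\gap(S_1,S_2)<-\beta$, a contradiction. The symmetric argument uses $S'\setminus S_k$. Together these give $u(S_{k-1})<p^j<\ell(S_2)$, which forces $k\le2$ and $S_1\prec\{j\}\prec S_2$ with no case split on where $p^j$ lies; $\alpha_j$ is computed afterwards.

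You instead keep only the weak bound $\gap\ge\beta$ and write $\alpha_j$ explicitly from tightness. You then split on the position of $p^j$ and test validity on small supports containing $j$ ($S_q\cup\{j\}$, $S_1\cup\{j\}$, $S_q\cup\{j\}\cup S_{q+1}$). This forces either $\alpha_j=0$ or some consecutive gap to equal $\beta$, and you invoke maximality only at the end of each case.

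The paper's route is shorter, since a single deletion inequality covers all positions of $j$. Yours is more local, mirrors the nearest-neighbor computations in Lemma~\ref{lem:onedim-neighbor-lifting}, and yields $\alpha_j=\gap(S_1,S_2)-\beta$ directly from the case analysis. Your treatment of the strict inequalities (validity at $S_2\cup\{j\}$, resp.\ $S_1\cup\{j\}$, followed by maximality) is essentially the paper's. In case (a), cite Proposition~\ref{prop:normalization} for $\alpha_j\ge0$, which you need to pass from $\alpha_j\le0$ to $\alpha_j=0$.
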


\begin{restatable}{lemma}{manyChildreUnion}\label{lem:many-children-union}
    Let $S_1\prec S_2\prec\cdots\prec S_k$ be pairwise disjoint tight sets.  If $S_i\cup S_j$ is not tight for every distinct $i,j$, then
$\bigcup_{i=1}^kS_i$
is not tight whenever $k\ge2$.
\end{restatable}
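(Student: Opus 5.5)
The plan is to compute the slack of the union directly in terms of the slacks of the pieces and the gaps between consecutive pieces. Then I will use Lemma~\ref{lem:disjoint-tight-gap} to force every gap to equal $\beta$, which would make $S_1\cup S_2$ tight and give a contradiction. This mirrors the computation already done in the proof of Lemma~\ref{lem:beta-pair-distance}.

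First I note that Lemma~\ref{lem:disjoint-tight-gap} is stated for members of $\mathcal L$, but its proof uses only that $S$ and $T$ are disjoint tight sets together with validity of the facet at $S\cup T$. So it applies verbatim to any two disjoint tight sets. Consequently, for every $q\in[k-1]$ we have $\gap(S_q,S_{q+1})\ge\beta$, since the ordering $S_q\prec S_{q+1}$ is given. The lemma also says $S_q\cup S_{q+1}$ is tight if and only if $\gap(S_q,S_{q+1})=\beta$.

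Next, suppose for contradiction that $U:=\bigcup_{q=1}^k S_q$ is tight with $k\ge2$. Because the coordinate intervals are ordered left to right and disjoint,
\[
\widthf(U)=\sum_{q=1}^k\widthf(S_q)+\sum_{q=1}^{k-1}\gap(S_q,S_{q+1}).
\]
Because the sets are pairwise disjoint, $\alpha(U)=\sum_q\alpha(S_q)$. Tightness of $U$ and of each $S_q$ gives
\[
\widthf(U)-\alpha(U)=-\beta
\qquad\text{and}\qquad
\sum_q\bigl(\widthf(S_q)-\alpha(S_q)\bigr)=-k\beta.
\]
Subtracting the two yields $\sum_{q=1}^{k-1}\gap(S_q,S_{q+1})=(k-1)\beta$. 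Each term is at least $\beta$, so each equals $\beta$. In particular $\gap(S_1,S_2)=\beta$, and Lemma~\ref{lem:disjoint-tight-gap} then says $S_1\cup S_2$ is tight. This contradicts the hypothesis that no pairwise union is tight.

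The only delicate point is justifying that Lemma~\ref{lem:disjoint-tight-gap} applies to arbitrary disjoint tight sets rather than only to members of $\mathcal L$. I would handle it by restating its two-line argument inline: validity at $S_q\cup S_{q+1}$ combined with tightness of each part. Everything else is bookkeeping.
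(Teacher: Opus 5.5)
Your proof is correct and is essentially the paper's argument in contrapositive form. The paper uses Lemma~\ref{lem:disjoint-tight-gap} to get $\gap(S_i,S_{i+1})>\beta$ for every adjacent pair and sums the slacks to show the union has slack strictly above $-\beta$; you instead assume the union is tight, force every gap to equal $\beta$, and your observation that Lemma~\ref{lem:disjoint-tight-gap} needs only tightness and validity (not membership in $\mathcal L$) is correct and is exactly what the paper uses implicitly.
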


\begin{restatable}{lemma}{oneChild}\label{lem:one-child}
    If $S$ is the unique child of $S'$ and $S'=S\cup\{j\}$ in $\mathcal F$, then
\[
\alpha_j=\begin{cases}
    p^j-u(S) & \text{if }p^j>u(S),\\
    \ell(S)-p^j & \text{if }p^j<\ell(S),\\
    0 & \text{otherwise}.
\end{cases}
\]
Moreover, $0\le\alpha_j<\beta$.
\end{restatable}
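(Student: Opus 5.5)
The plan is to combine the tightness of $S$ and $S'=S\cup\{j\}$ with validity of the facet on suitably chosen supports. Tightness of both sets gives
\[
\widthf(S\cup\{j\})-\alpha(S)-\alpha_j=-\beta=\widthf(S)-\alpha(S),
\]
so $\alpha_j=\widthf(S\cup\{j\})-\widthf(S)$. Since $d=1$, this increment equals $p^j-u(S)$ if $p^j>u(S)$, equals $\ell(S)-p^j$ if $p^j<\ell(S)$, and equals $0$ otherwise. This yields the displayed formula and, in particular, $\alpha_j\ge0$.

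It remains to show $\alpha_j<\beta$. The case $\alpha_j=0$ is immediate because $\beta>0$. By symmetry, suppose $p^j>u(S)$, so that $\alpha_j=p^j-u(S)$. Since $j\notin\operatorname{ch}(S')$ and $S$ is the unique child, $\{j\}\notin\mathcal L$. The singleton $\{j\}$ is compatible with every set, so maximality of $\mathcal L$ means it cannot be tight. Lemma~\ref{lem:singleton-alpha} then gives $\alpha_j\ne\beta$, and Proposition~\ref{prop:normalization} gives $\alpha_j\le\beta$. Together these yield $\alpha_j<\beta$.

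I would also check that a tight $\{j\}$ would contradict the structure directly. If $\{j\}$ were tight, it would belong to $\mathcal L$, and being contained in $S'$ but disjoint from $S$, it would have to be a second child of $S'$. That contradicts uniqueness of the child.

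The only subtle step is this strictness argument, and it reduces to the maximality of $\mathcal L$ together with Lemma~\ref{lem:singleton-alpha}.
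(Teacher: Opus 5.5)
Your proposal is correct and follows essentially the same route as the paper. You subtract the tightness equations of $S'$ and $S$ to get $\alpha_j=\widthf(S')-\widthf(S)$, which gives the three-case formula, and you derive strictness $\alpha_j<\beta$ from Lemma~\ref{lem:singleton-alpha}, using that $\{j\}$ cannot belong to $\mathcal L$ when $S$ is the unique child of $S'$. You also state explicitly the bound $\alpha_j\le\beta$ from Proposition~\ref{prop:normalization}, which the paper's proof uses without saying so.
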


\begin{remark}
    We want to emphasize that a variable $z_j$ gets coefficient $0$ if and only if inserting $j$ into its unique child $S$ does not change the width of the set.
\end{remark}

\begin{proof}[Proof of Proposition~\ref{prop:one-d-node-classification}]
Lemmas~\ref{lem:leaf-singleton} and~\ref{lem:singleton-alpha} show that all leaf node of $\mathcal F$ must be of Type 1. Lemma~\ref{lem:binary-node-with-new-element} shows that if a node introduces a new element, then it has at most 2 children. The same lemma also shows that if this node has exactly 2 children, it is of Type 3. On the other hand, if it has 1 child, then Lemma~\ref{lem:one-child} says it is of Type 2. Finally, Lemma~\ref{lem:many-children-union} implies that when a node does not introduce new element, then it must have exactly 2 children.This proves that all nodes in the laminar forest must be one of the 4 types.
\end{proof}

\begin{corollary}\label{cor:type-four-exists}
The forest $\mathcal F$ contains at least one Type~4 node.
\end{corollary}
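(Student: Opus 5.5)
The plan is a counting argument on the nodes of $\mathcal F$, in the spirit of Lemma~\ref{lem:beta-pair-distance}. By Proposition~\ref{prop:one-d-node-classification}, every node of $\mathcal F$ is of one of the four types. Nodes of Types~1, 2 and~3 each introduce exactly one new element. By Lemma~\ref{lem:one-new-element}, each element $j\in[n]$ is introduced at exactly one node $N(j)$. To see this, start from a root containing $j$, which exists by Lemma~\ref{lem:roots-cover}, and descend through the unique child containing $j$ until no child contains it. The resulting node is $N(j)$, and uniqueness follows from laminarity. Hence the number of nodes of Types~1--3 is exactly $n$.

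Next we bound $|\mathcal L|$ from below. Corollary~\ref{cor:laminar-basis} shows that $\mathcal L$ contains $n+1$ sets whose tight points $(\widthf(S),\ind^S)$ are affinely independent. In particular these sets are distinct, so $|\mathcal L|\ge n+1$. Since the nodes of $\mathcal F$ are exactly the members of $\mathcal L$, the forest has at least $n+1$ nodes.

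Combining the two counts, at least one node introduces no new element. By the classification, such a node is neither a leaf nor of Type~2 or~3, so it must be of Type~4. Lemma~\ref{lem:many-children-union} is what guarantees that a non-introducing node has exactly two children with $\gap(S_1,S_2)=\beta$, and this fact is already built into Proposition~\ref{prop:one-d-node-classification}.

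The only delicate point is the bijection between elements and nodes of Types~1--3. We need that a node of Type~2 or~3 introduces exactly one element and not zero, which is immediate from the explicit forms $S'=S\cup\{j\}$ and $S'=S_1\cup S_2\cup\{j\}$ with $j$ not in any child. We also need that no element is introduced twice, which again follows from laminarity. Everything else is direct bookkeeping.
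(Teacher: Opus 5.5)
Your proof is correct and is essentially the paper's argument: Corollary~\ref{cor:laminar-basis} gives at least $n+1$ nodes, laminarity makes the map from introducing nodes to introduced elements injective, so at most $n$ nodes introduce an element, and Proposition~\ref{prop:one-d-node-classification} identifies any non-introducing node as Type~4. The only difference is cosmetic. You count the introducing nodes exactly, using Lemma~\ref{lem:roots-cover} for existence, whereas the paper needs only the injection into $[n]$.
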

\begin{proof}
By Corollary~\ref{cor:laminar-basis}, the laminar family contains at least $n+1$ nodes.  If every node introduced an element not introduced by any descendant, then assigning that element to the node would inject the node set into $[n]$, so the forest would have at most $n$ nodes. Therefore, some non-leaf node introduces no new element.  Proposition~\ref{prop:one-d-node-classification} identifies it as Type~4.
\end{proof}

\subsubsection{Construct an admissible $\beta$-block system from a nontrivial facet}
Section~\ref{sec:basic} shows that all facets are obtained by sequential lifting. We next give an explicit description of these lifting sequences that will eventually lead to a facet. 

\begin{lemma}
\label{lem:beta-component-growth}
Let $w-\bar\alpha^\top z\ge-\bar\beta$ be a nontrivial facet, and choose
a bottom-up ordering of $\mathcal F$ in which all leaf variables
appear first. Between two consecutive $\bar\beta$-leaves, the positive
coefficients arising in this order consist of a chain growing from each
leaf toward the other, and possibly one merger. Every consecutive
chain gap is strictly less than $\bar\beta$. If a merger $\mu_t$
occurs between facing endpoints $c_t<d_t$, then
\[
p^{\mu_t}-p^{c_t}<\bar\beta,\qquad
p^{d_t}-p^{\mu_t}<\bar\beta,\qquad
\bar\alpha_{\mu_t}=p^{d_t}-p^{c_t}-\bar\beta>0.
\]
Every variable subsequently inserted between the joined components
has coefficient zero. The two exterior intervals have one chain each.
\end{lemma}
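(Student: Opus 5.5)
The proof rests on the node classification of Proposition~\ref{prop:one-d-node-classification} combined with Theorem~\ref{thm:facet-reconstruction-lifting}. Fix a bottom-up ordering $\pi$ of $\mathcal F$ that lists all leaves first. By Lemmas~\ref{lem:leaf-singleton} and~\ref{lem:singleton-alpha}, the leaves are exactly the $\bar\beta$-leaves, that is, the indices with $\bar\alpha_i=\bar\beta$. By Theorem~\ref{thm:facet-reconstruction-lifting}, lifting in the order $\pi$ reproduces $\bar\alpha$. By Lemma~\ref{lem:onedim-neighbor-lifting}, each coefficient is the \ref{eq:onedim-neighbor-coefficient} value determined by the nearest previously inserted neighbors.

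Two leaves $b_t<b_{t+1}$ that are consecutive among the leaves satisfy $p^{b_{t+1}}-p^{b_t}\ge\bar\beta$, by Lemma~\ref{lem:disjoint-tight-gap} applied to the two singletons. So after the leaf phase, each leaf is its own $\bar\beta$-component in the sense of the proof of Lemma~\ref{lem:onedim-neighbor-lifting}. I would then trace, for a single interval $(b_t,b_{t+1})$, how these components evolve. I would maintain the invariant that, until a merge happens, the interval contains exactly two components: a left one with rightmost point $c$ containing $b_t$, and a right one with leftmost point $d$ containing $b_{t+1}$. Every inserted point in between has a positive coefficient equal to a one-sided distance, or zero. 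The key substep is to read each insertion through the node type of Proposition~\ref{prop:one-d-node-classification}.

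\textbf{Type 2 nodes.} By Lemma~\ref{lem:one-child} and $0\le\alpha_j<\bar\beta$, a Type~2 node with $\alpha_j>0$ extends $c$ to the right, or $d$ to the left, by a gap strictly less than $\bar\beta$. This is a chain step. When $\alpha_j=0$, the insertion does not change the width of its child.

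\textbf{Type 3 nodes.} By Lemma~\ref{lem:binary-node-with-new-element}, a Type~3 node is the merger. It satisfies $p^{\mu}-p^{c}<\bar\beta$, $p^{d}-p^{\mu}<\bar\beta$, and $\bar\alpha_\mu=p^d-p^c-\bar\beta$. Positivity of $\bar\alpha_\mu$ follows because a zero coefficient would mean, by Lemma~\ref{lem:disjoint-tight-gap}, that the gap equals $\bar\beta$. In that case $S_1\cup S_2$ would itself be tight, and maximality of $\mathcal L$ would yield a Type~4 node instead. Alternatively, positivity comes directly from $\operatorname{gap}>\bar\beta$.

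\textbf{Type 4 nodes.} These join the two sides without introducing any new element.

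There is at most one merger per interval. After a Type~3 or Type~4 join, the two sides form a single component. Any later point $j$ inserted between them has $p^{R}-p^{L}\le\bar\beta$ for its nearest neighbors, since consecutive points within a component are less than $\bar\beta$ apart, or exactly $\bar\beta$ at a Type~4 junction. The rule then gives coefficient $0$, or equivalently Lemma~\ref{lem:one-child} gives $\alpha_j=0$ because the width is unchanged. This yields the "subsequently inserted" claim.

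The step I expect to be the main obstacle is showing that a component cannot straddle two leaves in the wrong way. Specifically, I must rule out a node whose children lie in different intervals being anything other than a Type~4 join at the leaf boundary. I must also show that the positive coefficients between $b_t$ and $b_{t+1}$ attach only to the components grown from these two leaves, and never to a component reaching across a leaf. I would handle this with laminarity together with the interval-disjointness of Lemma~\ref{lem:disjoint-tight-gap}: the coordinate intervals of disjoint tree nodes do not overlap, so a component containing $b_t$ and growing rightward is confined to $[b_t,b_{t+1})$ until it is joined.

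The exterior intervals $[1,b_1)$ and $(b_m,n]$ contain no leaf on one side. So only Type~2 steps growing outward from $b_1$ or $b_m$ can give positive coefficients there, and the analysis above yields a single chain. Lemma~\ref{lem:roots-cover} ensures that the endpoints $1$ and $n$ are covered by this chain.
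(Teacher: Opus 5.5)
Your argument is correct in outline, but it takes a detour that the paper avoids.

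Both proofs start from Theorem~\ref{thm:facet-reconstruction-lifting} and Lemma~\ref{lem:onedim-neighbor-lifting}. From there the paper works only with the nearest-neighbor formula and never uses the node classification:
\begin{itemize}
\item Since the leaves are inserted first, every $j\in(b_t,b_{t+1})$ has both nearest inserted neighbors $L,R\in[b_t,b_{t+1}]$. So the ``straddling'' worry you flag never arises on the lifting side.
\item The only facet-specific input is $\bar\alpha_j<\bar\beta$ for non-leaves (Lemma~\ref{lem:singleton-alpha}). This excludes the case where both one-sided distances are at least $\bar\beta$.
\item The remaining cases of the formula give directly one of three outcomes: coefficient zero, a one-sided chain step shorter than $\bar\beta$, or a merger satisfying the three strict conditions with $\bar\alpha_j=p^R-p^L-\bar\beta>0$. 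After a merger all gaps in the interval are below $\bar\beta$, so later insertions get zero.
\end{itemize}

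Your reading through Types~2--4 has one genuine benefit: it records which tree node produces each coefficient. That is the dictionary Theorem~\ref{thm:facet-beta-block-decomposition} later uses for the anchor block.

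It also needs a reconciliation your sketch does not carry out. Lemma~\ref{lem:one-child} gives $\bar\alpha_j=p^j-u(S)$ for the tree child $S$. To get a chain you need $u(S)=p^{L}$, that is, $S$ must end exactly at the current facing endpoint. It is not enough that $S$ contains $b_t$, which is all your laminarity and interval-disjointness remark addresses.

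This closes in one line. By the bottom-up order $S\subseteq A_{k-1}$, so $p^L\ge u(S)$ and
$p^j-u(S)=\bar\alpha_j\le p^j-p^L\le p^j-u(S)$.
For Type~3, the same comparison with $(p^R-p^L-\bar\beta)_+$, together with $\bar\alpha_j>0$, forces $u(S_1)=p^L$ and $\ell(S_2)=p^R$. Once this is in place, the node types are just relabelings of the nearest-neighbor cases, which is why the paper's direct case split is shorter.

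Finally, Lemma~\ref{lem:roots-cover} is not what makes the exterior chains reach $1$ and $n$. The one-sided formula does: a point with no inserted neighbor on one side receives $\min\{\bar\beta,\cdot\}>0$. As a non-leaf, it therefore makes a strict outward chain step.
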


\begin{proof}
By Theorem~\ref{thm:facet-reconstruction-lifting}, the lifting order
produces exactly $\bar\alpha$. By Lemma~\ref{lem:singleton-alpha}, every
variable inserted after the leaves has coefficient strictly less than
$\bar\beta$.

Consider an index $j$ inserted between two consecutive leaves, and let
$L$ and $R$ be its nearest previously inserted indices. Both exist.
If $p^R-p^L\le\bar\beta$, the nearest-neighbor formula gives
$\bar\alpha_j=0$. Otherwise, if $p^j-p^L<\bar\beta\le p^R-p^j$,
it gives $\bar\alpha_j=p^j-p^L$. This extends the chain growing from
the left. The symmetric case extends the chain growing from the right.
If both $p^j-p^L$ and $p^R-p^j$ are at least $\bar\beta$, the coefficient
would be $\bar\beta$, which is impossible. The only remaining case with
positive coefficient is
\[
p^j-p^L<\bar\beta,\quad p^R-p^j<\bar\beta,
\quad p^R-p^L>\bar\beta.
\]
Then $\bar\alpha_j=p^R-p^L-\bar\beta>0$, and $j$ joins the two
$\bar\beta$-components. After the components join, all consecutive
previously inserted points between the two leaves have gaps strictly
less than $\bar\beta$. Every further insertion in this interval
therefore receives coefficient zero. This also shows there is at most
one merger.

For an index to the left of the first leaf, the one-sided formula
similarly gives either a strict extension of the left exterior chain
or coefficient zero. The right exterior block is symmetric.
\end{proof}

\begin{theorem}
\label{thm:facet-beta-block-decomposition}
Let $w-\bar\alpha^\top z\ge-\bar\beta$
be a nontrivial facet of $\conv(B)$, and let $\mathcal L$ be an
inclusion-wise maximal laminar family of its tight sets. Then the
coefficient vector $\bar\alpha$ is generated by an admissible
$\bar\beta$-block system.
\end{theorem}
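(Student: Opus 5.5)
The plan is to read off an admissible $\bar\beta$-block system directly from the laminar forest $\mathcal F$ of the facet and the lifting order of Lemma~\ref{lem:beta-component-growth}. First, define the $\bar\beta$-leaves $b_1<\cdots<b_m$ as the indices with $\bar\alpha_i=\bar\beta$. By Lemma~\ref{lem:singleton-alpha}, these are exactly the Type~1 nodes of Proposition~\ref{prop:one-d-node-classification}, and every other coefficient is strictly below $\bar\beta$ (Lemma~\ref{lem:one-child} and the Type~3 bound). Any two singleton leaves are disjoint tight sets, so Lemma~\ref{lem:disjoint-tight-gap} gives $p^{b_{t+1}}-p^{b_t}\ge\bar\beta$.

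Next, fix a bottom-up ordering with all leaves first, as in Lemma~\ref{lem:beta-component-growth}. By Theorem~\ref{thm:facet-reconstruction-lifting}, lifting in this order reproduces $\bar\alpha$. By Lemma~\ref{lem:onedim-neighbor-lifting}, each coefficient is given by the nearest-neighbor rule. Lemma~\ref{lem:beta-component-growth} then says the following:
\begin{enumerate}
\item Each internal interval $[b_t,b_{t+1}]$ carries a chain from each end with gaps $<\bar\beta$, and possibly one merger satisfying the strict merger inequalities.
\item Each exterior interval carries a single chain.
\item All other indices receive zero.
\end{enumerate}
Setting $c_t,d_t$ to be the facing chain endpoints, the coefficients coincide with \eqref{eq:separated-block-coefficients}, \eqref{eq:merged-block-coefficients}, \eqref{eq:left-exterior-coefficients} and \eqref{eq:right-exterior-coefficients}. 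For the chain coefficients, note that a chain point's nearest left neighbour at insertion time is its chain predecessor.

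Two admissibility conditions remain to be checked. The first is that an interval without a merger really is a separated block, meaning $p^{d_t}-p^{c_t}=\bar\beta$, or $d_t=c_t+1$ with gap $>\bar\beta$. Suppose there is no merger and $p^{d_t}-p^{c_t}\ne\bar\beta$. By Lemma~\ref{lem:disjoint-tight-gap} the gap then exceeds $\bar\beta$. If some index $j$ lay strictly between $c_t$ and $d_t$, then at its insertion its neighbours would be $c_t,d_t$ or chain points closer to them. The nearest-neighbor rule would then give $j$ either a positive chain or merger coefficient, contradicting the definition of $c_t,d_t$, or coefficient $\bar\beta$, which is impossible. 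Hence $d_t=c_t+1$.

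The second condition is the existence of a separated block with central gap exactly $\bar\beta$ and $m\ge2$. This comes from Corollary~\ref{cor:type-four-exists}. A Type~4 node $S_1\cup S_2$ with $\operatorname{gap}(S_1,S_2)=\bar\beta$ gives $u(S_1)$ and $\ell(S_2)$ as facing endpoints with gap $\bar\beta$. Each $S_i$ contains a leaf, so $m\ge2$. I expect this step to be the main obstacle. One must show that $u(S_1)$ and $\ell(S_2)$ are the chain endpoints $c_t,d_t$ of a single block, i.e.\ that no leaf lies strictly between them and no merger joins them. I would argue this via the bottom-up order: nodes of $S_1$ and $S_2$ are grown before their union, and any later point between them sees neighbours at distance exactly $\bar\beta$, so it receives zero. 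The rightmost leaf in $S_1$ and the leftmost leaf in $S_2$ are then consecutive leaves. Their chains terminate at $u(S_1)$ and $\ell(S_2)$, because extremal elements of a tight node are chain points by the Type~2 classification.
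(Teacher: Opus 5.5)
Your proposal is correct and follows essentially the same route as the paper. Both arguments take the $\bar\beta$-leaves from Lemma~\ref{lem:singleton-alpha}, get the chains and merger from Lemma~\ref{lem:beta-component-growth} under a leaves-first bottom-up order, use the same separated-block case analysis, and build the anchor block from the Type~4 node of Corollary~\ref{cor:type-four-exists}. The one step to tighten is the anchor block: since leaves are lifted first, your argument that later points receive zero does not exclude an intervening leaf or an earlier insertion in the gap, but Lemma~\ref{lem:disjoint-tight-gap} closes this (the paper uses it for the leaf case), because any member of $\mathcal L$ containing a point strictly between $u(S_1)$ and $\ell(S_2)$ must strictly contain $S_1\cup S_2$, so no leaf lies there and every such point is lifted after $S_1\cup S_2$ is complete.
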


\begin{proof}
Let $\mathcal F$ be the laminar forest associated with $\mathcal L$.
We construct an admissible $\bar\beta$-block system from $\mathcal F$.

\paragraph{Find the $\bar\beta$-leaves.} By Lemmas~\ref{lem:leaf-singleton}
and~\ref{lem:singleton-alpha}, the leaves of $\mathcal F$ are precisely
the singletons $\{i\}$ satisfying $\bar\alpha_i=\bar\beta$.
Let their indices, ordered from left to right, be $b_1<\cdots<b_m$.
If $s<t$, validity at the support $\{b_s,b_t\}$ gives
$p^{b_t}-p^{b_s}-2\bar\beta\ge-\bar\beta$, and hence
$p^{b_t}-p^{b_s}\ge\bar\beta$.
Thus, the leaves of $\mathcal F$ form the $\bar\beta$-leaves.

\paragraph{Find the internal blocks.}
Choose a bottom-up lifting order with all leaf variables first.
Such an order exists, and Theorem~\ref{thm:facet-reconstruction-lifting}
shows that it recovers exactly $\bar\alpha$. Fix two consecutive
$\bar\beta$-leaves $b_t<b_{t+1}$.
By Lemma~\ref{lem:beta-component-growth}, the positive-coefficient
variables between these leaves form two chains growing inward, with
strict consecutive gaps, and possibly one merger.

If a positive merger occurs, the block satisfies the strict merged-block
conditions, and its coefficients are given by
\eqref{eq:merged-block-coefficients}. Otherwise, let $c_t<d_t$ be the
facing endpoints immediately before the components first join, if they
join, or their final facing endpoints otherwise. If
$p^{d_t}-p^{c_t}>\bar\beta$, there can be no index strictly between
$c_t$ and $d_t$: otherwise lifting that index would either extend one
of the chains or merge them, since its coefficient is strictly less
than $\bar\beta$. Hence $d_t=c_t+1$. If the components join without
a positive merger, the nearest-neighbor formula forces
$p^{d_t}-p^{c_t}=\bar\beta$. Points inserted across this gap receive
coefficient zero. Thus the block is separated, and its coefficients
are given by \eqref{eq:separated-block-coefficients}.
All remaining variables have coefficient zero, as required.

\paragraph{Find the exterior blocks.}
By Lemma~\ref{lem:beta-component-growth}, each exterior interval
consists of a single outward-growing chain and zero-coefficient
variables. Every chain gap is strictly less than $\bar\beta$, and
its coefficients are given by \eqref{eq:left-exterior-coefficients}
or \eqref{eq:right-exterior-coefficients}.

\paragraph{Choose an anchor block.}
By Corollary~\ref{cor:type-four-exists}, the forest contains a Type~4
node $S=S^-\cup S^+$ with $S^-\prec S^+$. By
Proposition~\ref{prop:one-d-node-classification},
$\gap(S^-,S^+)=\bar\beta$. The rightmost leaf contained in $S^-$
and the leftmost leaf contained in $S^+$ are consecutive
$\bar\beta$-leaves. Indeed, any intervening tight singleton would have
distance at least $\bar\beta$ from each of $S^-$ and $S^+$ by
Lemma~\ref{lem:disjoint-tight-gap}, contradicting their gap being
$\bar\beta$.
For the corresponding block, the two chains cannot merge with positive
coefficient across this gap. Its final facing endpoints therefore have
central gap $\bar\beta$. This gives the required separated block.

Consequently, every interval is a $\bar\beta$-block, at least
one internal block has central gap $\bar\beta$, and the prescribed
coefficient vector is exactly $\bar\alpha$.
\end{proof}

\begin{theorem}
\label{thm:beta-block-bijection}
Assume $p^1<\cdots<p^n$. The nontrivial normalized facets of
$\conv(B)$ are in one-to-one correspondence with the admissible
$\beta$-block systems.
\end{theorem}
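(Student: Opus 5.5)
We prove the bijection by exhibiting two mutually inverse maps, one from admissible $\beta$-block systems to nontrivial normalized facets and one back. For the forward direction, an admissible $\beta$-block system $\Sigma$ with parameter $\beta\in\text{\ss}$ produces, via Algorithm~\ref{alg:onedim-facet-generation}, a normalized inequality $w-\alpha(\Sigma)^\top z\ge-\beta$. Theorem~\ref{prop:beta-block-facets} shows that this inequality is facet-defining for $\conv(B)$. It is nontrivial because it contains $w$ and has $\beta>0$, so it is not $w\ge0$ or a bound on some $z_i$. By Proposition~\ref{prop:normalization}, the normalized form $(\alpha,\beta)$ of a nontrivial facet is unique. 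The map $\Phi:\Sigma\mapsto(\alpha(\Sigma),\beta)$ is therefore well defined into the set of nontrivial normalized facets.

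\textbf{Surjectivity.} Take an arbitrary nontrivial facet $w-\bar\alpha^\top z\ge-\bar\beta$. Lemma~\ref{lem:beta-pair-distance} gives $\bar\beta\in\text{\ss}$, and Theorem~\ref{thm:facet-beta-block-decomposition} gives an admissible $\bar\beta$-block system whose prescribed coefficient vector is exactly $\bar\alpha$. Its image under $\Phi$ is the given facet.

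Two points in that construction need checking against Definition~\ref{def:admissible-beta-block-system}:
\begin{itemize}
\item[(a)] \emph{$m\ge 2$.} The Type~4 node supplied by Corollary~\ref{cor:type-four-exists} has two children, each containing at least one leaf, so there are at least two $\bar\beta$-leaves.
\item[(b)] \emph{Leaf spacing.} The condition $p^{b_{t+1}}-p^{b_t}\ge\bar\beta$ was already derived in that proof from validity at the pair $\{b_t,b_{t+1}\}$.
\end{itemize}

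\textbf{Injectivity.} Suppose $\Phi(\Sigma)=\Phi(\Sigma')$. Then the two systems share the same $\beta$, which is the right-hand side of the common facet. By Proposition~\ref{lem:beta-block-uniqueness}, the coefficient vector determines the $\beta$-leaves, the chains in every block and the merger where one is present. Hence $\Sigma=\Sigma'$.

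\textbf{Main obstacle.} The chains, block types and merger positions must be part of what counts as a system, so that the recovery in Proposition~\ref{lem:beta-block-uniqueness} is genuinely complete. One subtle case needs separate treatment: a separated block with $d_t=c_t+1$ and $p^{d_t}-p^{c_t}>\beta$, compared against a merged or separated alternative. Two different admissible configurations could conceivably yield the same $\alpha$, for example a chain extended by an index whose coefficient happens to equal the consecutive difference. The proposition excludes this by walking each chain until the equality $\alpha_{r_h}=p^{r_h}-p^{r_{h-1}}$ fails. We would re-verify that walk in three situations:
\begin{itemize}
\item[(1)] a zero-coefficient index never masquerades as a chain point, since chain gaps are positive;
\item[(2)] a point of the opposite chain cannot satisfy the equality with respect to the current right endpoint, since its coefficient is less than $\beta\le p^{d_t}-p^{c_t}$;
\item[(3)] the merger satisfies $\alpha_{\mu_t}=p^{d_t}-p^{c_t}-\beta<p^{\mu_t}-p^{c_t}$, because $p^{d_t}-p^{\mu_t}<\beta$, so it too fails the equality.
\end{itemize}
With these checks, the two maps are mutually inverse and the correspondence is one-to-one.
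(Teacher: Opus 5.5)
Your proposal is correct and follows the paper's proof. It uses Theorem~\ref{prop:beta-block-facets} for the forward map, Theorem~\ref{thm:facet-beta-block-decomposition} for surjectivity, and Proposition~\ref{lem:beta-block-uniqueness} for injectivity. Your additional checks are sound and only make explicit what the paper leaves implicit: $m\ge2$ via the two children of a Type~4 node, $\beta$ being fixed by the facet's right-hand side, and the re-verification of the chain walk.
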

\begin{proof}
By Theorem~\ref{prop:beta-block-facets}, each admissible $\beta$-block
system defines a facet. By Theorem~\ref{thm:facet-beta-block-decomposition},
every nontrivial facet arises from such a system.
Finally, Lemma~\ref{lem:beta-block-uniqueness} shows that the coefficient
vector determines the system uniquely. This proves the correspondence.
\end{proof}

\begin{corollary}
\label{thm:onedim-complete-facet-description}
Assume $p^1<p^2<\cdots<p^n.$
Algorithm~\ref{alg:onedim-facet-generation} generates every nontrivial
facet of $\conv(B)$ exactly once. Consequently, $\conv(B)$ is described by $w\ge0,$ and $0\le z_i\le1,$ for all $i\in[n]$,
together with all inequalities output by
Algorithm~\ref{alg:onedim-facet-generation}.
\end{corollary}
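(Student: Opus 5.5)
The corollary follows by combining Theorem~\ref{thm:beta-block-bijection} with Proposition~\ref{prop:elementary} and Proposition~\ref{prop:normalization}, using the standard fact that a full-dimensional polyhedron is the intersection of its facet-defining halfspaces.

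\textbf{Step 1: classify the facets.} By Proposition~\ref{prop:elementary}, $\conv(B)$ is full-dimensional, so it is described by its facets. Every facet either does not involve $w$ or does. By Proposition~\ref{prop:elementary}(2), a facet not involving $w$ is a positive multiple of $z_i\ge0$ or $z_i\le1$. A facet involving $w$ is either $w\ge0$, which is facet-defining by Proposition~\ref{prop:elementary}(3), or a nontrivial $w$-facet. By Proposition~\ref{prop:normalization}, such a facet has the unique normalized form $w-\alpha^\top z\ge-\beta$ with $\beta>0$.

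\textbf{Step 2: each nontrivial facet is generated exactly once.} Fix a normalized nontrivial facet. Lemma~\ref{lem:beta-pair-distance} gives $\beta\in$ \ss. Theorem~\ref{thm:facet-beta-block-decomposition} gives an admissible $\beta$-block system whose prescribed coefficients equal $\alpha$. Algorithm~\ref{alg:onedim-facet-generation} loops over every $\beta\in$ \ss{} and every admissible $\beta$-block system, so it reaches this pair. In that iteration it assigns coefficients by exactly \eqref{eq:separated-block-coefficients}, \eqref{eq:merged-block-coefficients}, \eqref{eq:left-exterior-coefficients} and \eqref{eq:right-exterior-coefficients}, so it outputs the facet.

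For uniqueness, suppose two iterations output the same normalized inequality. The right-hand sides agree, so $\beta$ agrees. Proposition~\ref{lem:beta-block-uniqueness} then shows that $\alpha$ determines the leaves, the chains and the merger, so the two block systems coincide. Because the outer loop runs over \emph{distinct} values of $\beta$, the two iterations are the same iteration.

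\textbf{Step 3: every output is a facet.} Theorem~\ref{prop:beta-block-facets} shows that every inequality the algorithm outputs is facet-defining. So the output list contains no redundant inequality, and the listed system is exactly the set of facets. Together with Step 1 this gives the convex-hull description.

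\textbf{Expected obstacle.} The main subtlety is bookkeeping rather than mathematics: the phrase ``exactly once'' relies on the normalization being unique, which Proposition~\ref{prop:normalization} provides. Since all the substantive work is already contained in the cited results, this step should be routine.
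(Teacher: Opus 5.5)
Your proposal is correct and follows essentially the same route as the paper. Facetness of every output comes from Theorem~\ref{prop:beta-block-facets}, and coverage of every nontrivial facet comes from Lemma~\ref{lem:beta-pair-distance} together with Theorem~\ref{thm:facet-beta-block-decomposition}. Uniqueness rests on Proposition~\ref{lem:beta-block-uniqueness}, which the paper invokes through Theorem~\ref{thm:beta-block-bijection}, and Proposition~\ref{prop:elementary} supplies the trivial facets. Your explicit appeal to full-dimensionality and to the normalization of Proposition~\ref{prop:normalization} only spells out bookkeeping that the paper leaves implicit.
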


\begin{proof}
By Theorem~\ref{prop:beta-block-facets}, every inequality output by
Algorithm~\ref{alg:onedim-facet-generation} is facet-defining.

Conversely, let $w-\bar\alpha^\top z\ge-\bar\beta$
be any nontrivial facet. Theorem~\ref{thm:facet-beta-block-decomposition}
provides a admissible $\bar\beta$-block system whose block
coefficients are exactly $\bar\alpha$. By Lemma~\ref{lem:beta-pair-distance},
$\bar\beta\in\text{\ss}$. Algorithm~\ref{alg:onedim-facet-generation}
enumerates this system and therefore outputs the given facet.
Uniqueness follows from Theorem~\ref{thm:beta-block-bijection}. Proposition~\ref{prop:elementary} accounts for $w\ge0$ and the coordinate-bound facets.
Hence the stated family gives a complete description of $\conv(B)$.
\end{proof}

\subsection{Polynomial-time separation in one dimension}
\label{subsec:onedim-separation}

The lifting description is complete but involves
exponentially many permutations. However, separation
does not require enumerating them. In one dimension,
validity can be checked using only interval supports:
a nontrivial $w$-facet is valid for $\conv(B)$ if and
only if
$\sum_{i=\ell}^r\alpha_i-\beta\le p^r-p^\ell$
for all $1\le\ell\le r\le n$.
Thus, given $(\hat w,\hat z)$ with
$\hat z\in[0,1]^n$, separation over the nontrivial
inequalities of $\conv(B)$ reduces to~\eqref{eq:sep-primal} shown below, where the constant $-\hat w$
is omitted from the primal objective:

\noindent
\begin{minipage}[t]{0.48\linewidth}
\vspace{0pt}
\small
\begin{equation}\label{eq:sep-primal}\tag{P}
\begin{aligned}
\max_{\alpha,\beta}\quad
&\sum_{i=1}^n\hat z_i\alpha_i-\beta\\
\text{s.t.}\quad
&\sum_{i=\ell}^r\alpha_i-\beta
    \le p^r-p^\ell,\\
&\hspace{1em}1\le\ell\le r\le n,\\
&\alpha\ge0,\quad\beta\ge0.
\end{aligned}
\end{equation}
\end{minipage}%
\hfill
\begin{minipage}[t]{0.48\linewidth}
\vspace{0pt}
\small
\begin{equation}\label{eq:sep-dual}\tag{D}
\begin{aligned}
\min_y\quad
&\sum_{\ell\le r}(p^r-p^\ell)y_{\ell r}\\
\text{s.t.}\quad
&\sum_{\ell\le i\le r}y_{\ell r}
    \ge\hat z_i,\quad i\in[n],\\
&\sum_{\ell\le r}y_{\ell r}=1,\\
&y_{\ell r}\ge0,\quad
    1\le\ell\le r\le n.
\end{aligned}
\end{equation}
\end{minipage}
~\eqref{eq:sep-dual} is the corresponding dual. The equality constraint in~\eqref{eq:sep-dual} is because every singleton interval has zero cost, so any unused mass can be assigned to a singleton without changing the objective.

We next give a minimum-cost flow representation of \eqref{eq:sep-dual}.
Let $V=\{s,t\}\cup\{i^-,i^+:i\in[n]\}.$ For every $i\in[n]$, introduce zero-cost arcs $(s,i^-)$, $(i^-,i^+),$ and $(i^+,t),$
and impose the lower bound on flow $f_{i^-i^+}\ge \hat z_i.$
For $i=1,\ldots,n-1$, introduce the arc $(i^+,(i+1)^-)$
with cost $p_{i+1}-p_i$. A total of one unit of $s$-$t$ flow is required. Thus, we obtain the following min-cost flow formulation.
\begin{equation}
\label{eq:mcf}
\begin{aligned}
\min\quad
    & \sum_{i=1}^{n-1}(p_{i+1}-p_i)
      f_{i^+,(i+1)^-}\\
\text{s.t.}\quad
    & f \text{ is an $s$-$t$ flow of value }1,\\
    & f_{i^-i^+}\ge \hat z_i,
        && i\in[n],\\
    & f\ge0.
\end{aligned}
\tag{MCF}
\end{equation}

\begin{lemma}
\label{lem:dual-mcf}
Problems \eqref{eq:sep-dual} and \eqref{eq:mcf} are equivalent and have
the same objective value.
\end{lemma}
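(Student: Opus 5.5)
Both directions are proved by explicit solution maps that preserve the objective, so the two optimal values coincide. The key observation is structural: the flow network is a DAG whose $s$--$t$ paths are exactly the paths $s\to \ell^-\to \ell^+\to(\ell+1)^-\to\cdots\to r^+\to t$ for $1\le\ell\le r\le n$. Such a path passes through the arcs $(i^-,i^+)$ for precisely $i\in\{\ell,\ldots,r\}$, and its cost is $\sum_{i=\ell}^{r-1}(p^{i+1}-p^i)=p^r-p^\ell$. So an interval $[\ell,r]$ in \eqref{eq:sep-dual} corresponds one-to-one to a path $P_{\ell r}$, with the same cost and the same covered indices.

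\emph{From \eqref{eq:sep-dual} to \eqref{eq:mcf}.} Given feasible $y$, set $f:=\sum_{\ell\le r}y_{\ell r}\,\chi^{P_{\ell r}}$, where $\chi^{P}$ is the arc incidence vector of path $P$. Then $f$ is a nonnegative $s$--$t$ flow of value $\sum y_{\ell r}=1$. Its flow on $(i^-,i^+)$ is $\sum_{\ell\le i\le r}y_{\ell r}\ge\hat z_i$. Its cost equals $\sum(p^r-p^\ell)y_{\ell r}$. Hence $\mathrm{OPT}(\mathrm{MCF})\le\mathrm{OPT}(\mathrm{D})$.

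\emph{From \eqref{eq:mcf} to \eqref{eq:sep-dual}.} Take a feasible flow $f$ of value $1$. Since the network is acyclic, the flow decomposition theorem writes $f$ as a nonnegative combination of $s$--$t$ path flows, with no cycles, and the weights sum to $1$. Every $s$--$t$ path has the form $P_{\ell r}$. This holds because the only arcs leaving $i^-$ go to $i^+$, and the only arcs leaving $i^+$ go to $t$ or $(i+1)^-$, so a path enters at some $\ell^-$, moves rightward, and exits at some $r^+$. Define $y_{\ell r}$ as the weight of $P_{\ell r}$. Then $\sum y_{\ell r}=1$, and $\sum_{\ell\le i\le r}y_{\ell r}=f_{i^-i^+}\ge\hat z_i$. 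The cost is preserved because costs are additive along paths. Hence $\mathrm{OPT}(\mathrm{D})\le\mathrm{OPT}(\mathrm{MCF})$.

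Combining the two inequalities gives equal values, and the two maps carry optimal solutions to optimal solutions. Feasibility of both problems is immediate when $\hat z\in[0,1]^n$: the single interval $[1,n]$, or the single path $P_{1n}$, covers every index with mass $1$.

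The main obstacle is the second direction. It relies on the path characterization of the network together with an acyclic flow decomposition, and the latter must also hold in the presence of the lower bounds. This is fine because the lower bounds only constrain the arc values of $f$, not the decomposition. The remaining steps are routine verification.
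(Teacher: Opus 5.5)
Your proof is correct and follows the paper's argument. You identify $s$--$t$ paths with intervals via $P_{\ell r}$ of cost $p^r-p^\ell$, send $y_{\ell r}$ units along $P_{\ell r}$ for one direction, and use acyclic path decomposition for the converse. Your explicit justification of the path structure and of feasibility fills in details the paper leaves implicit.
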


\begin{proof}
Every $s$-$t$ path is uniquely associated with an interval $[l,r]$ and
has the form
\[
P_{lr}
=
s\to l^-\to l^+\to\cdots\to r^-\to r^+\to t.
\]
Its cost is $c(P_{lr})
=
\sum_{i=l}^{r-1}(p_{i+1}-p_i)
=
p_r-p_l.$

Given a feasible solution $y$ of \eqref{eq:sep-dual}, send $y_{lr}$
units of flow along the $s$-$t$ path $P_{lr}$. Since $\sum_{l\le r}y_{lr}=1$, this gives
a unit $s$-$t$ flow, satisfying the first constraint. Moreover, $f_{i^-i^+}
=
\sum_{l\le i\le r}y_{lr}
\ge \hat z_i,$ satisfying the second constraint. 
The total cost of the flow is the sum of flow cost along all $s$-$t$ paths. Thus, the objective satisfies 
\[
\sum_{l\le r}y_{lr}c(P_{lr})= \sum_{l\le r}(p_r-p_l)y_{lr}.
\]

Conversely, since the network is acyclic, every feasible unit flow admits
an $s$-$t$ path decomposition. Let $y_{lr}$ be the total flow assigned
to path $P_{lr}$. Then
\[
\sum_{l\le r}y_{lr}=1,
\qquad
\sum_{l\le i\le r}y_{lr}
=
f_{i^-i^+}
\ge \hat z_i,
\]
and the objective values coincide by the expression for $c(P_{lr})$.
\end{proof}

\begin{restatable}{theorem}{onedSeparation}\label{thm:separation-1d}
    When $d=1$, separation over $B$ can be performed in strongly
polynomial time. In particular, after sorting the points, a most
violated normalized nontrivial inequality can be obtained by solving a
minimum-cost flow problem on an acyclic graph in $O(n^2\log^2 n)$ time.
\end{restatable}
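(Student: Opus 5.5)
The proof will have three parts. First, show that separation over the nontrivial facets reduces exactly to the linear program \eqref{eq:sep-primal}. Second, pass to \eqref{eq:mcf} using LP duality and Lemma~\ref{lem:dual-mcf}. Third, bound the running time of a strongly polynomial min-cost flow algorithm on this network and show how to read off a most violated inequality from an optimal flow.

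\emph{Reduction to (P).} I will first justify the claim that a normalized inequality $w-\alpha^\top z\ge-\beta$ with $\alpha\ge0$, $\beta\ge0$ is valid for $B$ if and only if $\sum_{i=\ell}^r\alpha_i-\beta\le p^r-p^\ell$ for all $\ell\le r$. Necessity is immediate, since intervals are supports. For sufficiency, take any nonempty $S$ with $\ell=\min S$ and $r=\max S$. Then $\widthf(S)=p^r-p^\ell$, and because $\alpha\ge0$ we have $\alpha(S)\le\sum_{i=\ell}^r\alpha_i$. The empty set is covered by $\beta\ge0$. Hence the feasible region of \eqref{eq:sep-primal} is exactly the set of valid normalized $w$-inequalities, together with $w\ge0$ (the case $\alpha=0,\beta=0$). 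By Proposition~\ref{prop:normalization}, every nontrivial facet lies in this region. The violation of such an inequality at $(\hat w,\hat z)$ is $\hat z^\top\alpha-\beta-\hat w$. Therefore the maximum violation equals $\operatorname{OPT}(P)-\hat w$.

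\emph{Duality and attainment at a facet.} Problem~\eqref{eq:sep-primal} is feasible ($\alpha=0,\beta=0$) and bounded: the singleton constraints give $\alpha_i\le\beta$, so the objective is at most $\beta(\sum_i\hat z_i-1)$, and then constraint $\ell=1,r=n$ together with $\alpha\ge0$ bounds things further. Equivalently, boundedness follows because the dual \eqref{eq:sep-dual} is feasible via $y_{1n}=1$. Strong duality then gives $\operatorname{OPT}(P)=\operatorname{OPT}(D)$. A vertex of (P), after dividing out the cone structure, gives a facet or $w\ge0$. More concretely, maximizing a linear function over valid inequalities is attained at a facet of $\conv(B)$, or at $w\ge0$, whenever $\hat z\in[0,1]^n$, by a standard polarity argument. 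By Lemma~\ref{lem:dual-mcf}, $\operatorname{OPT}(D)=\operatorname{OPT}(\mathrm{MCF})$.

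\emph{Algorithm and complexity.} The MCF network has $2n+2$ nodes and $O(n)$ arcs. The lower bounds $f_{i^-i^+}\ge\hat z_i$ are removed by the standard substitution $f'=f-\hat z_i$, which turns them into node supplies and demands. I will then apply Orlin's strongly polynomial min-cost flow algorithm, which runs in $O(m\log n(m+n\log n))$ time. With $m=O(n)$ this is $O(n^2\log^2 n)$ after the $O(n\log n)$ sort. The most violated inequality is recovered from the optimal node potentials $\pi$ returned by the algorithm. Since (MCF) is the dual of (P) in disguise, I take $\alpha_i$ to be the reduced-cost dual of the lower-bound constraint on arc $(i^-,i^+)$ and $\beta$ to be the dual of the flow-value constraint. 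Complementary slackness then yields an optimal solution of (P) in $O(n)$ additional time.

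\emph{Expected obstacle.} The delicate step is the map from MCF potentials back to an optimal $(\alpha,\beta)$ in (P). The dual of (MCF) has potentials rather than interval constraints, so I must check that the resulting $(\alpha,\beta)$ satisfies all $O(n^2)$ interval constraints. I plan to verify this by telescoping the potential differences along the path $P_{\ell r}$, using dual feasibility on each arc. I will also treat the case where the optimum is $w\ge0$ separately.
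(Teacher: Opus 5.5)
Your proposal is correct and follows the paper's proof essentially step for step. You reduce to \eqref{eq:sep-primal} via interval supports, pass through \eqref{eq:sep-dual} to \eqref{eq:mcf} by Lemma~\ref{lem:dual-mcf}, run Orlin's algorithm on the $O(n)$-arc network after the lower-bound shift, read off $\alpha_i=\pi_{i^+}-\pi_{i^-}$ and $\beta=\pi_t-\pi_s$ from the optimal potentials, and check the interval constraints by the same telescoping along $P_{\ell r}$. Your explicit argument that the interval constraints imply validity (using $\alpha\ge0$ and $\widthf(S)=p^r-p^\ell$) fills in a step the paper only asserts. Your side remark that every vertex of \eqref{eq:sep-primal} gives a facet is not needed and fails in general: for $p=(0,1,2)$, the vertex $(\alpha,\beta)=(1,1,0,1)$, where $\alpha_3\ge0$ is tight, gives the non-facet $w-z_1-z_2\ge-1$. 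Nothing depends on that remark, because the optimum is valid and every normalized facet is feasible for \eqref{eq:sep-primal}, so the optimum is already a most violated inequality.
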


%% file: Boxing_Beta_Block_Example.tex

\begin{example}
\label{ex:beta-blocks-ten-points}
Let $\beta=4$ and consider the ten distinct points
\[
(p^1,\ldots,p^{10})=(0,1,2,4,6,7,9,11.5,13,15.5).
\]
Choose the $\beta$-leafs (colored in red) $b_1=2$, $b_2=5$, and $b_3=8$, which
satisfy $p^{b_2}-p^{b_1}=5\ge\beta$ and
$p^{b_3}-p^{b_2}=5.5\ge\beta$. They induce the four blocks
$[1,2]$, $[2,5]$, $[5,8]$, and $[8,10]$. It remains to select appropriate chains to form an admissible $\beta$-block system, illustrated below.

\begin{center}
\begin{adjustbox}{max width=\linewidth}
\begin{tikzpicture}[
    x=0.84cm,y=0.83cm,
    >=Stealth,
    every node/.style={font=\scriptsize},
    leaf/.style={circle,fill=red!70!black,inner sep=2.4pt},
    chain/.style={circle,fill=teal!75!black,inner sep=2.0pt},
    merger/.style={circle,fill=orange!85!black,inner sep=2.5pt},
    zero/.style={circle,fill=gray!55,inner sep=2.0pt},
    chainarrow/.style={-{Stealth[length=3pt]},teal!75!black,semithick},
    mergerarrow/.style={-{Stealth[length=3pt]},orange!85!black,semithick}
]
\draw[gray!60,semithick] (-0.35,0) -- (15.95,0);
\draw[chainarrow] (1,0.94) -- (0,0.94);
\draw[chainarrow] (1,0.94) -- (2,0.94);
\draw[teal!65!black,densely dashed,semithick] (2,0.94) -- (6,0.94);
\node[above,at={(4,0.94)}] {$p^5-p^3=\beta$};
\draw[chainarrow] (6,0.94) -- (7,0.94);
\draw[mergerarrow] (7,0.94) -- (8.90,0.94);
\draw[mergerarrow] (11.5,0.94) -- (9.10,0.94);
\node[above,at={(9,1.18)},text=orange!65!black] {merger $\mu_2=7$};
\draw[chainarrow] (11.5,0.94) -- (13,0.94);
\draw[chainarrow] (13,0.94) -- (15.5,0.94);
\foreach \x/\i/\coord/\coef in {
  0/1/0/1, 1/2/1/4, 2/3/2/1, 4/4/4/0,
  6/5/6/4, 7/6/7/1, 9/7/9/{\tfrac12},
  11.5/8/11.5/4, 13/9/13/{\tfrac32},
  15.5/10/15.5/{\tfrac52}}
{
  \node[above=3pt] at (\x,0) {$\i$};
  \node[below=7pt] at (\x,0) {$\coord$};
  \node[below=24pt] at (\x,0) {$\coef$};
}
\foreach \x in {0,2,7,13,15.5} \node[chain] at (\x,0) {};
\foreach \x in {1,6,11.5} \node[leaf] at (\x,0) {};
\node[merger] at (9,0) {};
\node[zero] at (4,0) {};
\node[anchor=east] at (-0.45,0.38) {index $i$};
\node[anchor=east] at (-0.45,-0.35) {$p^i$};
\node[anchor=east] at (-0.45,-0.89) {$\alpha_i$};
\foreach \a/\b/\desc in {
  0/1/{left exterior},1/6/{separated},
  6/11.5/{merged},11.5/15.5/{right exterior}}
{
  \draw[gray!75,decorate,decoration={brace,mirror,amplitude=4pt}]
       (\a,-1.38) -- (\b,-1.38);
  \node[below] at ({(\a+\b)/2},-1.58) {\desc};
}
\end{tikzpicture}
\end{adjustbox}
\end{center}

The chains and their coefficients are as follows (starting from a $\beta$-leaf, the arrows are pointing to the chain growing direction ). 
\begin{itemize}
\item \emph{Left exterior block $[1,2]$}:
The chain is $\ell=(1,2)$, giving
$\alpha_1=p^2-p^1=1$.
\item \emph{Separated block $[2,5]$}:
Take $r^1=(2,3)$ and $\ell^1=(5)$, with
$c_1=3$ and $d_1=5$. The chain gap is $p^3-p^2=1<\beta$,
while $p^{d_1}-p^{c_1}=p^5-p^3=4=\beta$.
Thus $\alpha_3=1$ and $\alpha_4=0$.
\item \emph{Merged block $[5,8]$}:
Take $r^2=(5,6)$ and $\ell^2=(8)$, with
$c_2=6$, $d_2=8$, and merger $\mu_2=7$. Here
\[
p^7-p^6=2<\beta,\qquad
p^8-p^7=2.5<\beta,\qquad
p^8-p^6=4.5>\beta.
\]
Consequently, $\alpha_6=p^6-p^5=1$ and
$\alpha_7=p^8-p^6-\beta=\tfrac12$.
\item \emph{Right exterior block $[8,10]$:}
The chain is $r=(8,9,10)$, giving
$\alpha_9=p^9-p^8=\tfrac32$ and
$\alpha_{10}=p^{10}-p^9=\tfrac52$.
\end{itemize}
The three $\beta$-leafs have
$\alpha_2=\alpha_5=\alpha_8=\beta=4$.
Thus the resulting coefficient vector is
\[
\alpha=(1,4,1,0,4,1,\tfrac12,4,\tfrac32,\tfrac52),
\]
and the corresponding facet is $ w-z_1-4z_2-z_3-4z_5-z_6-\tfrac12 z_7
 -4z_8-\tfrac32 z_9-\tfrac52 z_{10}\ge-4.$
The separated block $[2,5]$ is an anchor block because
$p^5-p^3=\beta$. The admissible $\beta$-block system
therefore defines a facet of $\conv(B)$.
\end{example}

%% file: Boxing_Corrected_Laminar_Tree_Example.tex
\begin{example}
Consider the points $(p^1,\ldots,p^{10})=(0,1,2,4,6,7,9,11.5,13,15.5)$
and the facet $ w-z_1-4z_2-z_3-4z_5-z_6-\tfrac12z_7
-4z_8-\tfrac32z_9-\tfrac52z_{10}\ge -4.$
The following laminar forest (tree) consists of tight sets for this facet.
Each node is labeled by its type in Proposition~\ref{prop:one-d-node-classification}.

\begin{figure}[H]
\centering
\begin{tikzpicture}[
  scale=0.9,
  transform shape,
  level 1/.style={sibling distance=38mm, level distance=10mm},
  level 2/.style={sibling distance=22mm, level distance=10mm},
  level 3/.style={sibling distance=14mm, level distance=10mm},
  level 4/.style={sibling distance=10mm, level distance=10mm},
  every node/.style={
    draw,
    rounded corners=1pt,
    font=\scriptsize,
    inner sep=1.2pt,
    minimum height=4.5mm,
    align=center
  },
  edge from parent/.style={draw, -latex, line width=0.4pt},
  type1/.style={fill=red!18},
  type2/.style={fill=blue!15},
  type3/.style={fill=orange!20},
  type4/.style={fill=green!18}
]
\node[type2] {$[10]$}
  child {
    node[type4] {$[10]\setminus\{4\}$}
      child {
        node[type2] {$\{1,2,3\}$}
          child {
            node[type2] {$\{1,2\}$}
              child {
                node[type1] {$\{2\}$}
              }
          }
      }
      child {
        node[type3] {$\{5,6,7,8,9,10\}$}
          child {
            node[type2] {$\{5,6\}$}
              child {
                node[type1] {$\{5\}$}
              }
          }
          child {
            node[type2] {$\{8,9,10\}$}
              child {
                node[type2] {$\{8,9\}$}
                  child {
                    node[type1] {$\{8\}$}
                  }
              }
          }
      }
  };
\end{tikzpicture}

\vspace{1mm}
{\scriptsize
\textcolor{red!70!black}{\rule{1.2ex}{1.2ex}} Type 1 \qquad
\textcolor{blue!70!black}{\rule{1.2ex}{1.2ex}} Type 2 \qquad
\textcolor{orange!80!black}{\rule{1.2ex}{1.2ex}} Type 3 \qquad
\textcolor{green!60!black}{\rule{1.2ex}{1.2ex}} Type 4
}

\caption{Laminar forest associated with
$w-z_1-4z_2-z_3-4z_5-z_6-\tfrac12 z_7-4z_8-\tfrac32 z_9-\tfrac52 z_{10}\ge -4$.}
\label{fig:laminar-tree-example}
\end{figure}

\end{example}

%% file: higher_dim.tex
\section{The Convex Hull in Higher Dimensions}\label{sec:higher-dim}

The uncrossing and laminar-basis results of Section~\ref{sec:basic} hold in every dimension. What fails beyond one dimension is the additive left-to-right gap structure used to classify branching nodes.

For the rest of this section, we fix an arbitrary nontrivial facet $w-\alpha^\top z\ge-\beta$ of $\conv(B)$. Let $\mathcal L$ be an
inclusion-wise maximal laminar family of its tight sets and $\mathcal F$ be its associated laminar forest.

\subsection{The failure of the one-dimensional gap decomposition}

For pairwise disjoint sets $S_1,\ldots,S_k \in \mathcal L$, define
\begin{equation}\label{eq:aggregate-gap}
 \Gamma(S_1,\ldots,S_k)
 :=
 \widthf\left(\bigcup_{i=1}^kS_i\right)
 -\sum_{i=1}^k\widthf(S_i).
\end{equation}
In one dimension, ordered disjoint sets satisfy
\[
 \Gamma(S_1,\ldots,S_k)
 =
 \sum_{i=1}^{k-1}\operatorname{gap}(S_i,S_{i+1}).
\]
There is no analogous pairwise decomposition in higher dimensions. Consider the example: $ p_1=(0,0),\  p_2=(0,2),\  p_3=(1,1),$
each relevant pair has total width two, whereas $ \Gamma(\{1\},\{2\},\{3\})=3.$
There is no ordering of these three singleton sets for which the total width equals the sum of two consecutive pairwise gaps.

The aggregate quantity~\eqref{eq:aggregate-gap} has a direct interpretation for tight nodes.

\begin{lemma}\label{lem:multi-gap-tight}
Let $S_1,\ldots,S_k$ be pairwise disjoint tight sets for a nontrivial facet $w-\alpha^\top z\ge -\beta$.  If $\bigcup_{i=1}^kS_i$ is also tight, then $\Gamma(S_1,\ldots,S_k)=(k-1)\beta.$
\end{lemma}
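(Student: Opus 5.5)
The plan is a direct computation from the tightness equations, mirroring the one-dimensional computation in the proof of Lemma~\ref{lem:beta-pair-distance} but without any gap decomposition. Since each $S_i$ is tight, I will write
\[
\widthf(S_i)-\alpha(S_i)=-\beta,\qquad i\in[k].
\]
Tightness of $U:=\bigcup_{i=1}^kS_i$ gives $\widthf(U)-\alpha(U)=-\beta$.

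The key step is that the $S_i$ are pairwise disjoint, so $\alpha$ is additive over them: $\alpha(U)=\sum_{i=1}^k\alpha(S_i)$. I will substitute $\alpha(S_i)=\widthf(S_i)+\beta$ into the equation for $U$. This yields
\[
\widthf(U)=\alpha(U)-\beta=\sum_{i=1}^k\widthf(S_i)+k\beta-\beta .
\]
Rearranging then gives $\Gamma(S_1,\ldots,S_k)=(k-1)\beta$ by the definition~\eqref{eq:aggregate-gap}.

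I do not expect any real obstacle. The only points to check are that disjointness is genuinely needed for the additivity of $\alpha$, and that no dimension-specific geometry is used. The argument relies only on the definition of $\mathcal T(\alpha,\beta)$, which is why the statement holds in every dimension.
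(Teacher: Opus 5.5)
Your proposal is correct and is the same argument as the paper's: disjointness gives $\alpha\bigl(\bigcup_{i}S_i\bigr)=\sum_i\alpha(S_i)=\sum_i\widthf(S_i)+k\beta$, and substituting this into the tightness equation for the union and rearranging yields $\Gamma(S_1,\ldots,S_k)=(k-1)\beta$. As you note, the argument uses only the definition of tightness and no dimension-specific geometry.
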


\begin{proof}
Since the sets are disjoint,
\[
 \alpha\left(\bigcup_{i=1}^kS_i\right)
 =\sum_{i=1}^k\alpha(S_i)
 =\sum_{i=1}^k\widthf(S_i)+k\beta.
\]
Tightness of the union gives
\[
 \widthf\left(\bigcup_{i=1}^kS_i\right)
 -\sum_{i=1}^k\widthf(S_i)-k\beta
 =-\beta,
\]
which is the desired identity.
\end{proof}

\subsection{General node configurations}

Let $S'\in\mathcal F$ be a node with children $S_1,\ldots,S_k$.  Lemma~\ref{lem:one-new-element} implies that $S'$ contains either no new element or one new element beyond the union of its children.

\begin{proposition}[Higher-dimensional node classification]\label{prop:hd-node-classification}
Every node $S'\in\mathcal F$ is of one of the following types.
\begin{enumerate}[label=\textnormal{Type \arabic*.},leftmargin=3.5em]
 \item $S'=\{j\}$.  Then $\alpha_j=\beta$.
 \item $S'=S\cup\{j\}$, where $S$ is the unique child. Then $\alpha_j=\widthf(S')-\widthf(S).$
 \item $ S'=\left(\bigcup_{i=1}^kS_i\right)\cup\{j\},$
 where $S_1,\ldots,S_k$ are the children of $S'$ in $\mathcal F$. Then
 \begin{equation}\label{eq:hd-type3}
  \alpha_j
  =
  \widthf(S')-
  \sum_{i=1}^k\widthf(S_i)
  -(k-1)\beta.
 \end{equation}
 \item $S'=\bigcup_{i=1}^kS_i,\ k\ge 2$, 
 where $S_1,\ldots,S_k$ are the children of $S'$ in $\mathcal F$. Then
$ \Gamma(S_1,\ldots,S_k)=(k-1)\beta.$
\end{enumerate}
\end{proposition}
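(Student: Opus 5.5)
The classification is essentially bookkeeping on top of the general laminar results of Section~\ref{sec:basic}. The plan is to split nodes of $\mathcal F$ according to two things: whether $S'$ is a leaf, and, if it is not, whether $S'\setminus\operatorname{ch}(S')$ is empty or a singleton. Lemma~\ref{lem:one-new-element} guarantees it has at most one element. Each case then either matches one of the four types directly, or reduces to a tightness identity derived by the same computation as Lemma~\ref{lem:multi-gap-tight}.

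\textbf{Leaves.} By Lemma~\ref{lem:leaf-singleton}, a leaf is a singleton $\{j\}$. Lemma~\ref{lem:singleton-alpha} gives $\alpha_j=\beta$ because $\{j\}\in\mathcal L$ is tight. This is Type~1.

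\textbf{Non-leaves with a new element.} Let $S'$ have children $S_1,\dots,S_k$, $k\ge1$, which are pairwise disjoint by laminarity. Suppose $S'=\bigl(\bigcup_i S_i\bigr)\cup\{j\}$ with $j\notin\bigcup_i S_i$. All of $S'$ and the $S_i$ are tight, so $\alpha(S_i)=\widthf(S_i)+\beta$. Disjointness then gives
\[
\alpha(S')=\sum_{i=1}^k\bigl(\widthf(S_i)+\beta\bigr)+\alpha_j=\widthf(S')+\beta .
\]
Solving for $\alpha_j$ yields \eqref{eq:hd-type3}. When $k=1$ this specializes to $\alpha_j=\widthf(S')-\widthf(S)$, which is Type~2. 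When $k\ge2$ it is Type~3.

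\textbf{Non-leaves without a new element.} In this case $S'=\bigcup_i S_i$. Here $k\ge2$ is needed: if $k=1$, then $S'=S_1$, contradicting the strict inclusion required for a child in the forest. Lemma~\ref{lem:multi-gap-tight}, applied to the disjoint tight children whose union $S'$ is tight, gives $\Gamma(S_1,\dots,S_k)=(k-1)\beta$. This is Type~4.

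\textbf{Exhaustiveness.} Every node is either a leaf or a non-leaf, and Lemma~\ref{lem:one-new-element} bounds the new elements by one, so the cases cover all nodes. The only point requiring care is the $k=1$ exclusion in Type~4 and confirming that children are disjoint (laminar and incomparable). No step is a real obstacle; unlike the one-dimensional case, no bound on $k$ is claimed.
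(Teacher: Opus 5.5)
Your proof is correct and follows essentially the same route as the paper. The paper also takes the structural alternatives from Lemmas~\ref{lem:leaf-singleton} and~\ref{lem:one-new-element}, gets Type~1 from Lemma~\ref{lem:singleton-alpha}, obtains Types~2--3 by subtracting the children's tightness equations from that of $S'$, and gets Type~4 from Lemma~\ref{lem:multi-gap-tight}; your explicit checks that the children are pairwise disjoint and that $k\ge2$ in Type~4 fill in details the paper leaves implicit.
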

\begin{remark}
    Type~2 is the special case $k=1$ of Type~3.
\end{remark}

\begin{proof}
The structural alternatives follow from Lemmas~\ref{lem:leaf-singleton} and~\ref{lem:one-new-element}.  Type~1 follows from Lemma~\ref{lem:singleton-alpha}.  In Type~3, tightness of $S'$ and of its children gives
\begin{align*}
 -\beta
 &=\widthf(S')-\alpha_j-
   \sum_{i=1}^k\alpha(S_i)=\widthf(S')-\alpha_j-
   \sum_{i=1}^k[\widthf(S_i)+\beta],
\end{align*}
which yields~\eqref{eq:hd-type3}.  Type~4 is Lemma~\ref{lem:multi-gap-tight}.
\end{proof}

The laminar family contains at least $n+1$ nodes. Every element is introduced at exactly one node, and
each node introduces at most one element. Hence some node introduces no element. Such a node is of
Type~4. 
\subsection{Characterization of the right-hand-side of a nontrivial facet}
Type 4 nodes give a characterization of the right-hand sides of nontrivial facets through partitions of tight sets. This suggests that to find all possible right-hand-sides, we need to consider partitions of subsets \(S\subseteq[n]\). In this section, we
identify a parameter $\beta_S^*$ for every subset \(S\) with \(|S|\geq 2\), and show that a partition
of $S$ gives the right-hand-side of a facet if and only if it attains
$\beta_S^*$. Moreover, every nontrivial normalized facet has right-hand-side parameter $\beta_S^*$ for some such $S$.

For any $S\subseteq[n]$ with $|S|\ge2$, let
$\Pi(S)$ denote the collection of partitions of $S$.
Given a partition
$\mathcal P=\{S_1,\ldots,S_k\}\in\Pi(S)$
with $k\ge2$, define
\[
\beta_S(\mathcal P)
=
\frac{
\operatorname{wd}(S)
-\sum_{i=1}^k\operatorname{wd}(S_i)
}{k-1},
\]
and let $\beta_S^*
=
\max_{\substack{\mathcal P\in\Pi(S)\\
                 |\mathcal P|\ge2}}
\beta_S(\mathcal P).$

\begin{lemma}\label{lem:admissible-partition}
Let $\mathcal P=\{S_1,\ldots,S_k\}$ be a partition of
$S\subseteq[n]$ with $k\ge2$.
Suppose that $w-\alpha^\top z\ge-\beta$ is valid for
$\conv(B)$ and that $S,S_1,\ldots,S_k$ are all tight.
Then there exists a facet-defining inequality with the
same right-hand side for which these sets remain tight.
\end{lemma}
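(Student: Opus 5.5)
The plan is to work in the space of coefficients $(\alpha,\beta)$ of inequalities normalized to have coefficient one on $w$, and to extract a facet as a vertex of a suitable face. Define the polyhedron
\[
Q:=\bigl\{(\alpha,\beta)\in\R^n\times\R:\ \beta-\alpha(T)\ge-\widthf(T)\ \ \forall T\subseteq[n]\bigr\},
\]
whose points are exactly the valid inequalities $w-\alpha^\top z\ge-\beta$ for $B$, and hence for $\conv(B)$. Let $G\subseteq Q$ be the face on which the constraints for $T\in\{S,S_1,\ldots,S_k\}$ hold with equality. By hypothesis the given $(\alpha,\beta)$ lies in $G$, so $G$ is a nonempty face of $Q$.

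\textbf{Step 1: $\beta$ is constant on $G$.} For any $(\alpha',\beta')\in G$, add the $k$ tight equations $\alpha'(S_i)=\widthf(S_i)+\beta'$ and subtract the tight equation $\alpha'(S)=\widthf(S)+\beta'$. Because $\mathcal P$ partitions $S$, the $\alpha'$-terms cancel, and we get
\[
(k-1)\beta'=\widthf(S)-\sum_{i=1}^k\widthf(S_i).
\]
Since $k\ge2$, this forces $\beta'=\beta_S(\mathcal P)=\beta$. Hence every point of $G$ has the same right-hand side $\beta$ as the original inequality. This step is the reason the lemma needs a partition with $k\ge 2$.

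\textbf{Step 2: $G$ has a vertex.} It suffices to show that $Q$ is pointed. Let $(d\alpha,d\beta)$ be in the lineality space of $Q$, so $d\beta-d\alpha(T)=0$ for all $T$. The choice $T=\varnothing$ gives $d\beta=0$, and singletons $T=\{i\}$ then give $d\alpha_i=0$. So $Q$ is pointed, and therefore so is its nonempty face $G$. It follows that $G$ contains a vertex $(\alpha^*,\beta)$ of $Q$, and the tight sets $S,S_1,\ldots,S_k$ remain tight at $(\alpha^*,\beta)$.

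\textbf{Step 3: the vertex gives a nontrivial facet.} Since $(\alpha^*,\beta)$ is a vertex of $Q\subseteq\R^{n+1}$, there are $n+1$ tight sets $T_0,\ldots,T_n$ whose constraint gradients $(-\ind^{T_j},1)$ are linearly independent. Linear independence of the vectors $(\ind^{T_j},1)$ is equivalent to affine independence of the $\ind^{T_j}$. That in turn gives affine independence of the points $(\widthf(T_j),\ind^{T_j})\in B$, all of which lie on the face of $\conv(B)$ defined by $w-(\alpha^*)^\top z\ge-\beta$. This face therefore has dimension $n$. By Proposition~\ref{prop:elementary}, $\conv(B)$ is full-dimensional in $\R^{n+1}$, so the inequality is facet-defining. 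It involves $w$, so it is a nontrivial $w$-facet, and Proposition~\ref{prop:normalization} then gives $\alpha^*\ge0$ automatically.

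I expect the main subtlety to be Steps 1 and 2 together. Fixing $\beta$ by hand and taking a vertex of the slice $Q\cap\{\beta=\text{const}\}$ would in general yield only a point on an edge of $Q$, which is not a facet. The partition identity in Step 1 avoids this, because it makes the $\beta$-coordinate an implied equality on $G$. A vertex of $G$ is then a genuine vertex of $Q$ with the prescribed right-hand side.
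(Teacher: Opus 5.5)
Your proposal is correct and follows essentially the same argument as the paper: the same normalized polar $Q$, the same face cut out by $S,S_1,\ldots,S_k$, the same pointedness check via $T=\varnothing$ and singletons, and the same partition identity forcing the right-hand side to be $\beta$. Your Step~3 additionally supplies the vertex-to-facet justification (affine independence of the $n+1$ tight points), which the paper only asserts; one small slip, irrelevant to the lemma, is your claim that the facet is nontrivial, since when $\beta=0$ the only vertex of $Q$ with that right-hand side is $(0,0)$, i.e., the facet $w\ge 0$.
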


\begin{proof}
    Consider the normalized polar of $\conv(B)$: $Q:= \{(\alpha,\beta)\in\mathbb R^{n+1}: \alpha (T) -\beta \le \widthf(T),\ \forall T\subseteq [n] \}$. Now define a face of the polar using the tight sets $S, S_1,... S_k$:
    $F_Q = \{(\alpha, \beta)\in Q: \alpha(T)- \beta = \widthf(T),\ \forall T\in \{S,S_1,...,S_k\}\}$. 
    The polyhedron $Q$ is pointed, since any lineality
direction $(h,\tau)$ must satisfy
$h(T)-\tau=0$ for all $T\subseteq[n]$. Consider
taking $T=\varnothing$ and $T=\{i\}$ gives
$\tau=0$ and $h=0$.
Since $F_Q$ is a nonempty face of the pointed
polyhedron $Q$, it contains a vertex, which
is also a vertex of $Q$, say $(\alpha^*,\beta^*)$.Thus the inequality $w- \alpha^{*\top} z\ge -\beta^*$ is a facet of $\conv B$ and is tight on $S, S_1,... S_k$. Since $S_1,\ldots,S_k$ partition $S$,
summing their tight equations and comparing
with the tight equation for $S$ gives
\[
(k-1)\beta^*
=\widthf(S)-\sum_{i=1}^k\widthf(S_i)
=(k-1)\beta.
\]
Hence $\beta^*=\beta$, as desired.
\end{proof}
Lemma~\ref{lem:admissible-partition} motivates the following definition. 
\begin{definition}[Facet admissible partition]
Given a subset $S\subseteq[n]$, a partition $\mathcal P = \{S_1,..., S_k\}$ of $S$, where $k \ge 2$, is called \textit{facet-admissible} if there exists a facet $w-\alpha^\top z \ge - \beta $ of $\conv(B)$ such that $S_1, S_2,.., S_k$ and $S$ are all tight.
\end{definition}

\begin{lemma}\label{cor:admissible-part-beta}
    Suppose a valid inequality $w-\alpha^\top z \ge - \beta$ is tight on the set $S$. Then $\beta_S(\mathcal P)\le \beta$ for every $\mathcal P\in \Pi(S)$ with $|\mathcal P|\ge 2$, with equality if and only if every set in $\mathcal P$ is tight for the inequality. 
\end{lemma}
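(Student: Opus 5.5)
The plan is a direct computation from validity on each block and tightness on $S$; no earlier structural result is needed beyond the definitions. Fix a partition $\mathcal P=\{S_1,\ldots,S_k\}\in\Pi(S)$ with $k\ge2$. Every $S_i$ is nonempty, since it is a block of a partition.

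First, validity of $w-\alpha^\top z\ge-\beta$ at the binary point $(\widthf(S_i),\ind^{S_i})$ gives the slack
\[
\delta_i:=\widthf(S_i)-\alpha(S_i)+\beta\ge0 .
\]
Tightness at $S$ gives $\alpha(S)=\widthf(S)+\beta$.

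Next, the $S_i$ are disjoint with union $S$, so $\alpha(S)=\sum_{i=1}^k\alpha(S_i)$. Substituting $\alpha(S_i)=\widthf(S_i)+\beta-\delta_i$ yields
\[
\widthf(S)+\beta=\sum_{i=1}^k\widthf(S_i)+k\beta-\sum_{i=1}^k\delta_i .
\]
Rearranging,
\[
\widthf(S)-\sum_{i=1}^k\widthf(S_i)=(k-1)\beta-\sum_{i=1}^k\delta_i .
\]
Dividing by $k-1>0$ gives
\[
\beta_S(\mathcal P)=\beta-\frac{\sum_{i=1}^k\delta_i}{k-1}\le\beta .
\]

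For the equality case, $\beta_S(\mathcal P)=\beta$ holds if and only if $\sum_i\delta_i=0$. Since each $\delta_i\ge0$, this is equivalent to $\delta_i=0$ for all $i$, that is, every block $S_i$ is tight. This argument is the same bookkeeping as in Lemma~\ref{lem:multi-gap-tight}, run in the converse direction with slacks.

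The only point that needs care is that validity is applied at the integer points $\ind^{S_i}$ of $B$ with $w=\widthf(S_i)$. This is legitimate because each $S_i$ is nonempty and these points lie in $B$, so there is no real obstacle.
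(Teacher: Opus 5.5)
Your proof is correct and is essentially the paper's argument: validity on each block, tightness of $S$ together with additivity of $\alpha$ over the partition, then rearranging and dividing by $k-1$. Writing the slacks $\delta_i$ explicitly makes the equality case cleaner than in the paper, which simply asserts it. (Nonemptiness of the blocks is not actually needed for validity at those points, since the empty support also lies in $B$.)
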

\begin{proof}
Let $\mathcal P=\{S_1,\ldots,S_k\}$ be
any partition of $S$ with $k\geq 2$.
By validity,
\[
\alpha(S_i)\leq \widthf(S_i)+\beta,
\qquad i=1,\ldots,k.
\]
Since $S$ is tight,
\[
\widthf(S)+\beta
=\alpha(S)
=\sum_{i=1}^k\alpha(S_i)
\leq\sum_{i=1}^k(\widthf(S_i)+\beta).
\]
Rearranging gives
\[
\beta_S(\mathcal P)
=\frac{\widthf(S)-\sum_{i=1}^k\widthf(S_i)}{k-1}
\leq\beta.
\]
Equality holds if and only if every
$S_i\in\mathcal P$ is also tight.
\end{proof}

The reverse of the lemma is also true. We provide the proof in Appendix~\ref{subsec:computing-beta}. 
\begin{restatable}{theorem}{allSubsetAdmissible}
\label{thm:all-subsets-admissible}
For every $S\subseteq[n]$ with $|S|\ge2$,
every partition $\mathcal P\in\Pi(S)$
attaining $\beta_S^*$ is facet-admissible.
Consequently, every such subset $S$
admits a facet-admissible partition.
\end{restatable}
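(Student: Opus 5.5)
The plan is to reduce facet-admissibility to the construction of a single valid inequality, and then invoke Lemma~\ref{lem:admissible-partition}. Fix $S$ with $|S|\ge2$ and a partition $\mathcal P^*=\{S_1,\ldots,S_k\}$ attaining $\beta_S^*$, and set $\beta:=\beta_S^*$. By Lemma~\ref{lem:admissible-partition}, it suffices to exhibit some $\alpha\in\R^n$ such that $w-\alpha^\top z\ge-\beta$ is valid for $\conv(B)$ and $S,S_1,\ldots,S_k$ are all tight. That lemma only works with the pointed polar $Q$, so no sign condition on $\alpha$ is needed.

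\textbf{Step 1: reduction to subsets of $S$.} I will put $\alpha_j:=0$ for $j\notin S$. Then for any $T\subseteq[n]$, monotonicity in Proposition~\ref{prop:width-properties} gives $\alpha(T)=\alpha(T\cap S)\le \widthf(T\cap S)+\beta\le\widthf(T)+\beta$, provided the inequality holds for all $T\subseteq S$. So validity needs to be checked only on subsets of $S$.

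\textbf{Step 2: a Dilworth-truncation LP on $S$.} Define $f(T):=\widthf(T)+\beta$ for nonempty $T\subseteq S$ and $f(\varnothing):=0$. By Proposition~\ref{prop:width-properties}(iii), $f$ is intersecting submodular on $2^S$: the constant $\beta$ cancels on intersecting pairs. Consider
\[
\max\Bigl\{\alpha(S):\ \alpha(T)\le f(T)\ \ \forall\,\varnothing\ne T\subseteq S\Bigr\}.
\]
The Dilworth truncation theorem for intersecting submodular functions (Edmonds/Fujishige~\cite{Fujishige2005}) says this maximum equals $\min_{\mathcal P\in\Pi(S)}\sum_{T\in\mathcal P}f(T)$, where the trivial partition $\{S\}$ is included. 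By definition of $\beta=\beta_S^*$, every partition with $|\mathcal P|\ge2$ satisfies $\sum_{T\in\mathcal P}\widthf(T)+|\mathcal P|\beta\ge\widthf(S)+\beta$. Equality holds for $\{S\}$ and for $\mathcal P^*$, and the latter follows exactly from $\beta_S(\mathcal P^*)=\beta$. Hence the minimum equals $f(S)$ and is attained by both of these partitions. Let $\alpha$ be an optimal solution. Then $\alpha(S)=f(S)$, so $S$ is tight. Moreover $\sum_i\alpha(S_i)=\alpha(S)=f(S)=\sum_i f(S_i)$ together with $\alpha(S_i)\le f(S_i)$ forces every $S_i$ to be tight. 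Validity on subsets of $S$ is built into the constraints, and Step~1 extends it to all of $[n]$. Lemma~\ref{lem:admissible-partition} then yields a facet with right-hand side $\beta$ that is tight on $S,S_1,\ldots,S_k$, so $\mathcal P^*$ is facet-admissible. The "consequently" part follows because $\Pi(S)$ contains partitions with at least two blocks when $|S|\ge2$, and it is finite, so $\beta_S^*$ is attained.

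\textbf{Main obstacle.} The key step is the min-max identity in Step~2. If I want to avoid citing it, I would prove it directly as follows. Run the Edmonds-style greedy/Dilworth argument: take an optimal $\alpha$ and a maximal family of tight sets. Uncross it into a partition of $S$ using the fact that tight sets which intersect have tight union and intersection; this is the same computation as Lemma~\ref{lem:crossing}, now for $f$. Then show that the maximal tight sets cover $S$, since otherwise $\alpha_j$ could be increased for an uncovered $j$. The maximal tight sets are then pairwise disjoint and form a partition $\mathcal P$ with $\alpha(S)=\sum_{T\in\mathcal P}f(T)$. Weak duality, namely that $\alpha(S)=\sum_{T\in\mathcal P}\alpha(T)\le\sum_{T\in\mathcal P}f(T)$ for every partition, is immediate. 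I would also check boundedness and feasibility of the LP. Feasibility holds with $\alpha=0$, since $f\ge0$ uses $\beta\ge0$, which holds because the trivial two-block splits give $\beta_S^*\ge0$. Boundedness holds because the singleton constraints $\alpha_i\le f(\{i\})=\beta$ bound each coordinate, so the LP attains its maximum.
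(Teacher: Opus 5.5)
Your proposal is correct and follows the paper's proof essentially step for step. The paper also sets $\alpha=0$ off $S$ and uses monotonicity for validity. It takes an optimal solution of $\max\{x(S): x(T)\le \widthf(T)+\beta_S^*\ \forall\,\varnothing\ne T\subseteq S\}$, whose value equals the best partition value by Lemma~\ref{lem:partition-primal-dual}, forces tightness of every block by summation, and concludes with Lemma~\ref{lem:admissible-partition}. The only difference is in how the min-max identity is justified: the paper proves it by uncrossing an optimal dual solution to a laminar support and invoking total unimodularity, whereas your fallback argument works on the primal side (the maximal tight sets of an optimal primal solution cover $S$ and are pairwise disjoint), much like the paper's Lemma~\ref{lem:constructive-partition}, and both arguments are valid.
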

\begin{remark}
    If $\widthf(S)>0$, then $\beta_S^*>0$, and the facet
in Theorem~\ref{thm:all-subsets-admissible} is nontrivial.
If $\widthf(S)=0$, then $\beta_S^*=0$, and all partitions
of $S$ are facet-admissible through the facet $w\ge0$.
\end{remark}

\begin{corollary}
Let $S\subseteq[n]$ with $|S|\geq 2$.
A partition $\mathcal P\in\Pi(S)$ with
$|\mathcal P|\geq 2$ is facet-admissible
if and only if
$\beta_S(\mathcal P)=\beta_S^*$. We have the following equality:
\begin{align*}
    &\{\beta>0: w-\alpha^\top z \ge -\beta \text{ defines a nontrivial facet for some }\alpha\} \\&= \{\beta^*_S:
S\subseteq [n], |S|\ge 2, \widthf(S)>0\}
\end{align*}
\end{corollary}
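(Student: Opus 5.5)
The corollary follows from Lemma~\ref{cor:admissible-part-beta} and Theorem~\ref{thm:all-subsets-admissible}, with the Type~4 structure of Proposition~\ref{prop:hd-node-classification} supplying the reverse inclusion of right-hand sides. I will prove the partition characterization first and then the set equality.

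\textbf{Partition characterization.}
For the ``if'' direction, a partition attaining $\beta_S^*$ is facet-admissible by Theorem~\ref{thm:all-subsets-admissible}.

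For the ``only if'' direction, suppose $\mathcal P$ is facet-admissible via a facet $w-\alpha^\top z\ge-\beta$. By definition $S$ and every part of $\mathcal P$ are tight, so the equality case of Lemma~\ref{cor:admissible-part-beta} gives $\beta_S(\mathcal P)=\beta$. The same lemma gives $\beta_S(\mathcal P')\le\beta$ for every partition $\mathcal P'$ of $S$ with at least two parts. Hence $\beta=\beta_S^*$, and so $\beta_S(\mathcal P)=\beta_S^*$.

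\textbf{Inclusion ``$\subseteq$''.}
Take a nontrivial facet $w-\alpha^\top z\ge-\beta$ with $\beta>0$. Form a maximal laminar family $\mathcal L$ and its forest $\mathcal F$. As noted after Proposition~\ref{prop:hd-node-classification}, counting introduced elements against the $n+1$ nodes forces some node $S'$ of Type~4. Write $S'=S_1\cup\cdots\cup S_k$ with $k\ge2$ children.

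Then $|S'|\ge2$, and all of $S',S_1,\ldots,S_k$ are tight. So $\{S_1,\ldots,S_k\}$ is a facet-admissible partition of $S'$, and by the previous step $\beta=\beta_{S'}(\mathcal P)=\beta^*_{S'}$. Since $\beta>0$, Lemma~\ref{lem:multi-gap-tight} gives $\widthf(S')\ge\Gamma(S_1,\ldots,S_k)=(k-1)\beta>0$, because widths are nonnegative.

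\textbf{Inclusion ``$\supseteq$''.}
Take $S$ with $|S|\ge2$ and $\widthf(S)>0$. The two-block partition that splits off a point attaining a coordinate extreme shows $\beta_S^*>0$; this is the content of the Remark after Theorem~\ref{thm:all-subsets-admissible}. Theorem~\ref{thm:all-subsets-admissible} then gives a facet $w-\alpha^\top z\ge-\beta$ tight on $S$ and on all parts of a maximizing partition. The partition characterization shows its right-hand side is $\beta=\beta_S^*>0$.

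It remains to see the facet is nontrivial. It is not $w\ge0$, since that has $\beta=0$. It is not a bound $0\le z_i\le1$, since it involves $w$; one can normalize it with coefficient one on $w$, as in Proposition~\ref{prop:normalization}. Hence $\beta_S^*$ lies in the left-hand set.

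\textbf{Main obstacle.}
All the real difficulty is inside Theorem~\ref{thm:all-subsets-admissible}, which is assumed here. Given that theorem, the only delicate point is the positivity and nontriviality bookkeeping above.
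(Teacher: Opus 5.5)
\textbf{Overall.} Your argument is correct and follows the route the paper has in mind. The partition characterization is Lemma~\ref{cor:admissible-part-beta} combined with Theorem~\ref{thm:all-subsets-admissible}. The inclusion ``$\subseteq$'' comes from a Type~4 node of the laminar forest; your checks that members of $\mathcal L$ are nonempty because $\beta>0$, and that $\widthf(S')\ge(k-1)\beta>0$, supply bookkeeping the paper leaves implicit. The inclusion ``$\supseteq$'' is Theorem~\ref{thm:all-subsets-admissible} together with the Remark that follows it.

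\textbf{One faulty justification.} Splitting off a single point that attains a coordinate extreme does not in general certify $\beta_S^*>0$. The relevant quantity is $\beta_S(\{S\setminus\{i\},\{i\}\})=\widthf(S)-\widthf(S\setminus\{i\})$, and this is positive only if $i$ is the \emph{unique} point of $S$ attaining that extreme.

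For example, take the four corners of the unit square in $\R^2$. Every point attains two coordinate extremes, yet removing any one of them leaves the width equal to $2$. So every such two-block partition gives $\beta_S=0$.

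Use the all-singletons partition instead, as the paper does in the proof of Theorem~\ref{thm:all-subsets-admissible}. It gives $\beta_S^*\ge\widthf(S)/(|S|-1)>0$. With that replacement your proof is complete.
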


The preceding results establish that the facet-admissible partitions of a given subset (S) are exactly those attaining \(\beta_S^*\). We next provide a polynomial-time algorithm for computing \(\beta_S^*\) and a maximizing partition. The proof is given in Appendix~\ref{subsec:computing-beta}.

\begin{restatable}{theorem}{betaPolynomial}\label{thm:beta-polynomial}
    There exists an algorithm that computes
$\beta_S^*$ and a maximizing partition for every $S\subseteq[n]$ with $|S|\ge2$ using at most $O(|S|^2)$
submodular-function minimizations.
In particular, the algorithm is strongly
polynomial in the value-oracle model.
\end{restatable}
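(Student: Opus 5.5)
The plan is to reduce the computation of $\beta_S^*$ to a short sequence of minimum-partition problems for intersecting submodular functions. Each such problem is solved by Dilworth truncation, and a Dinkelbach/Newton-type iteration over $\beta$ drives the sequence.

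\emph{Reformulation.} For $\beta\ge0$ define $f_\beta(T):=\widthf(T)+\beta$ for nonempty $T\subseteq S$. For a partition $\mathcal P=\{S_1,\ldots,S_k\}$ we have
\[
\sum_{i=1}^k f_\beta(S_i)-f_\beta(S)=\sum_i\widthf(S_i)-\widthf(S)+(k-1)\beta .
\]
So for $k\ge2$, $\beta_S(\mathcal P)>\beta$ holds if and only if $\sum_i f_\beta(S_i)<f_\beta(S)$. Define $h(\beta):=\min_{\mathcal P\in\Pi(S)}\sum_{S_i\in\mathcal P}f_\beta(S_i)$, where the trivial partition $\{S\}$ is allowed. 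Then $h(\beta)\le f_\beta(S)$, with strict inequality exactly when $\beta<\beta_S^*$. By Proposition~\ref{prop:width-properties}(iii), $f_\beta$ is intersecting submodular, because adding a constant preserves the inequality on intersecting pairs. Hence $h(\beta)$ and a minimizing partition can be computed by the Dilworth-truncation greedy procedure for intersecting submodular functions~\cite{Fujishige2005,NaganoKawaharaIwata2010}. This procedure adds the elements of $S$ one at a time. At each step, one submodular minimization over sets containing the new element finds the best block to merge with the current partition. This uses $O(|S|)$ submodular-function minimizations per evaluation.

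\emph{Newton iteration.} Start with the partition of $S$ into singletons, $\mathcal P_0$, and set $\beta_0:=\beta_S(\mathcal P_0)=\widthf(S)/(|S|-1)\le\beta_S^*$. Given $\beta_t=\beta_S(\mathcal P_t)$ with $|\mathcal P_t|\ge2$, compute a minimizing partition $\mathcal P$ for $h(\beta_t)$. There are two cases.
\begin{enumerate}
\item If $h(\beta_t)=f_{\beta_t}(S)$, stop: $\beta_t=\beta_S^*$ and $\mathcal P_t$ is a maximizer. Since the stopping partition $\mathcal P_t$ already has at least two blocks, the constraint $|\mathcal P|\ge2$ causes no difficulty, even though Dilworth truncation itself may return $\{S\}$.
\item Otherwise the minimizer has $k\ge2$ blocks and $\beta_S(\mathcal P)>\beta_t$. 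Set $\mathcal P_{t+1}:=\mathcal P$ and $\beta_{t+1}:=\beta_S(\mathcal P)$.
\end{enumerate}
Correctness follows from the equivalence above, so the iterates increase strictly and never exceed $\beta_S^*$.

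\emph{Iteration bound (main obstacle).} It remains to show that the Newton loop stops after $O(|S|)$ rounds. The standard Dinkelbach argument should apply. The function $\phi(\beta):=\min_{|\mathcal P|\ge2}\bigl[\sum_i\widthf(S_i)-\widthf(S)+(|\mathcal P|-1)\beta\bigr]$ is a minimum of affine functions with integer slopes $|\mathcal P|-1\in\{1,\ldots,|S|-1\}$, so it is concave and piecewise linear, and $\beta_S^*$ is its root. Newton's method approaches this root from the left, and the slopes of successive iterates' minimizers strictly decrease. The exchange argument runs as follows. Let $\mathcal P_t$ and $\mathcal P_{t+1}$ be minimizers at $\beta_{t-1}<\beta_t$, with values $\phi(\beta_{t-1})<0$ and $\phi(\beta_t)<0$, and with $\mathcal P_t$ evaluating to $0$ at $\beta_t$. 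Comparing the two affine pieces at $\beta_{t-1}$ and at $\beta_t$ forces $|\mathcal P_{t+1}|<|\mathcal P_t|$.

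One subtlety needs care. The minimizer returned by Dilworth truncation minimizes over all partitions, including $\{S\}$, rather than the $k\ge2$ restricted $\phi$. I will therefore check that whenever $h(\beta_t)<f_{\beta_t}(S)$, the returned partition also minimizes $\phi$ at $\beta_t$, which holds because $\{S\}$ is not optimal in that case. Given this, there are at most $|S|-1$ iterations, each using $O(|S|)$ submodular minimizations. That gives the claimed $O(|S|^2)$ bound. Because all arithmetic consists of sums and ratios of widths, the algorithm is strongly polynomial in the value-oracle model, using Orlin's strongly polynomial submodular minimization~\cite{Orlin2009}.
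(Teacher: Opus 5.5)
Your proposal is correct and follows essentially the same route as the paper. The paper likewise evaluates $H_S(\lambda)=\min_{\mathcal P\in\Pi(S)}\sum_{A\in\mathcal P}(\widthf(A)+\lambda)$ with an $|S|$-step greedy (Dilworth-truncation) procedure, starts from the singleton partition at $\lambda=\widthf(S)/(|S|-1)$, and updates $\lambda$ to $\beta_S(\mathcal P)$ until the one-block partition is optimal. It bounds the number of rounds by showing that the block count strictly decreases. The differences are cosmetic: the paper proves the greedy's optimality directly instead of citing it, and it states your two-point slope comparison as an invariant. That invariant says every partition with at least $|\mathcal R|$ blocks is no better than $\{S\}$ at the current $\lambda$. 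It also covers the first step, which your exchange argument leaves implicit, since the singleton partition is the only one with $|S|$ blocks.
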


\subsection{Family of facets}

We next provide a family of facets by identifying the smallest value of $\beta$ arising in normalized nontrivial facets of $\conv(B)$.

\begin{proposition}\label{prop:beta-min}
Assume that no two points coincide.  Then
\begin{equation}\label{eq:beta-min}
 \beta_{\min}
 :=
 \min_{\substack{S\subseteq[n]\\|S|\ge2}}
 \frac{\widthf(S)}{|S|-1}
\end{equation}
is the smallest possible right-hand side of a nontrivial normalized facet of $\conv(B)$.  Moreover,
\begin{equation}\label{eq:beta-min-facet}
 w-\beta_{\min}\sum_{i=1}^n z_i\ge-\beta_{\min}
\end{equation}
is facet-defining.
\end{proposition}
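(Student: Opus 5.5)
The proof has two parts: a lower bound showing that no nontrivial facet has right-hand side below $\beta_{\min}$, and a construction showing that \eqref{eq:beta-min-facet} is facet-defining with right-hand side exactly $\beta_{\min}$. Together these show that $\beta_{\min}$ is the smallest possible right-hand side.

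\emph{Lower bound.} Let $w-\alpha^\top z\ge-\beta$ be a nontrivial normalized facet.
\begin{enumerate}
\item By the discussion after Proposition~\ref{prop:hd-node-classification}, the laminar forest $\mathcal F$ contains a Type~4 node $S=S_1\cup\cdots\cup S_k$ with $k\ge2$. The node $S$ and all its children are tight.
\item Lemma~\ref{cor:admissible-part-beta} then gives $\beta=\beta_S(\{S_1,\ldots,S_k\})$ and $\beta\ge\beta_S(\mathcal P)$ for every partition $\mathcal P$ of $S$ with $|\mathcal P|\ge2$.
\item Apply this to the partition of $S$ into singletons. Since $\widthf$ vanishes on singletons,
\[
\beta\ \ge\ \frac{\widthf(S)}{|S|-1}\ \ge\ \beta_{\min}.
\]
\end{enumerate}
So every nontrivial facet has right-hand side at least $\beta_{\min}$. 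Because the points are distinct, every $S$ with $|S|\ge2$ has $\widthf(S)>0$, hence $\beta_{\min}>0$.

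\emph{Validity of \eqref{eq:beta-min-facet}.} We check the inequality at every binary point $(\widthf(S),\ind^S)$.
\begin{itemize}
\item For $S=\varnothing$ it reads $0\ge-\beta_{\min}$, which holds.
\item For a singleton it holds with equality: $0-\beta_{\min}=-\beta_{\min}$.
\item For $|S|\ge2$ it is equivalent to $\widthf(S)\ge\beta_{\min}(|S|-1)$, which is exactly the definition of $\beta_{\min}$ as a minimum.
\end{itemize}
Since the inequality holds at every point of $B$, and increasing $w$ only helps, it is valid for $\conv(B)$.

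\emph{Facetness.} We exhibit $n+1$ affinely independent tight points of $\conv(B)$, which is full-dimensional by Proposition~\ref{prop:elementary}.
\begin{enumerate}
\item The $n$ points $(0,e_i)$, $i\in[n]$, are tight, as shown above.
\item Let $S^*$ be a minimizer in \eqref{eq:beta-min}. Then $(\widthf(S^*),\ind^{S^*})$ is tight.
\item Take differences from $(0,e_1)$. The vectors $(0,e_i-e_1)$ for $i\ge2$ are linearly independent and have zero $w$-coordinate. The remaining difference has $w$-coordinate $\widthf(S^*)>0$, so it is not in their span.
\end{enumerate}
This gives $n+1$ affinely independent tight points, so the tight face has dimension $n$ and the inequality is facet-defining. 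It is nontrivial because $\beta_{\min}>0$, so $\beta_{\min}$ is attained by an actual nontrivial facet and is therefore the minimum.

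The main point needing care is the lower bound: it relies on every nontrivial facet having a Type~4 node, which Corollary~\ref{cor:laminar-basis} guarantees through its node-counting consequence. Once that node is available, the rest is direct.
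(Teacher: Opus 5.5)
Your proof is correct and follows the paper's argument. For validity and facetness you use the same singletons plus a minimizer $S^*$; your positive-$w$-coordinate argument plays the role of the paper's observation that $\chi^{S^*}$ lies off the hyperplane $\sum_i z_i=1$. For the lower bound, you combine a tight set of size at least two with $\alpha_i\le\beta$, as the paper does. The paper finds that tight set directly by counting: a facet needs $n+1$ affinely independent tight points, the empty set is not tight, and there are only $n$ singletons. Your detour through a Type~4 node of the laminar forest is valid but unnecessary, since the existence of that node rests on the same count.
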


\begin{proof}
Let $w-\alpha^\top z\ge-\beta$ be a nontrivial facet of $\conv(B)$.  Proposition~\ref{prop:normalization} gives $\alpha_i\le\beta$.  Since a facet has $n+1$ affinely independent tight points and there are only $n$ singleton supports, some tight set $T$ satisfies $|T|\ge2$.  Tightness gives
\[
 \widthf(T)=\alpha(T)-\beta
 \le(|T|-1)\beta,
\]
and hence
\[
 \beta\ge\frac{\widthf(T)}{|T|-1}\ge\beta_{\min}.
\]

For validity of~\eqref{eq:beta-min-facet}, let $S\subseteq[n]$.  The empty set and singleton sets satisfy the inequality at equality or slack.  If $|S|\ge2$, definition~\eqref{eq:beta-min} gives $ \widthf(S)\ge(|S|-1)\beta_{\min},$
which is equivalent to validity.  Every singleton is tight, and any minimizer $S^*$ in~\eqref{eq:beta-min} is tight.  The $n$ singleton points together with $(\widthf(S^*),\ind^{S^*})$ are affinely independent, because $\ind^{S^*}$ does not lie in the affine hyperplane $\sum_i z_i=1$ containing the singleton incidence vectors.  Thus~\eqref{eq:beta-min-facet} is facet-defining.
\end{proof}
We propose an algorithm that finds $\beta_{\min}$ in polynomial time. The proof is deferred to Appendix~\ref{sec:findBetaMin}.
\begin{restatable}{theorem}{polytimeBetaMin}\label{thm:polytimeBetaMin}
There exists an algorithm that computes
$\beta_{\min}$ and a subset attaining
\eqref{eq:beta-min} using at most $O(n^2)$
submodular-function minimizations.
In particular, the algorithm is strongly
polynomial in the value-oracle model, even
when the dimension $d$ is part of the input.
\end{restatable}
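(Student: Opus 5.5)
The plan is to combine the anchoring idea of Section~\ref{subsec:anchored-boxing} with a Dinkelbach-type parametric search. The anchor restores submodularity. The search runs over the ratio $\widthf(S)/(|S|-1)$, and the cardinality denominator bounds the number of iterations.

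\textbf{Step 1: submodularity after anchoring.} Fix $a\in[n]$ and let $\mathcal S_a:=\{S\subseteq[n]:a\in S\}$. This family is a lattice under union and intersection. Any two of its members intersect, so Proposition~\ref{prop:width-properties}(iii) shows that $\widthf$ restricted to $\mathcal S_a$ is submodular. Hence, for every $\lambda\ge0$, the function
\[
g_\lambda^a(S):=\widthf(S)-\lambda(|S|-1)
\]
is submodular on $\mathcal S_a$. Identifying $S$ with $S\setminus\{a\}\subseteq[n]\setminus\{a\}$, minimizing it is a single submodular-function minimization. Since $\widthf(\{a\})=0$, the singleton gives $g_\lambda^a(\{a\})=0$. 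Thus $\min_{S\in\mathcal S_a}g^a_\lambda(S)<0$ holds exactly when some $S\ni a$ with $|S|\ge2$ has $\widthf(S)/(|S|-1)<\lambda$. This sidesteps the constraint $|S|\ge2$, which would otherwise break the lattice structure.

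\textbf{Step 2: Dinkelbach iteration per anchor.} For a fixed anchor $a$ with $n\ge2$, initialize $S_0=\{a,b\}$ for any $b\ne a$ and set $\lambda_0=\widthf(S_0)$. At iteration $t$, minimize $g^a_{\lambda_t}$ over $\mathcal S_a$ and obtain a minimizer $S_{t+1}$.
\begin{itemize}
\item If the minimum is $0$, stop. Then $\lambda_t=\min_{S\ni a,\,|S|\ge2}\widthf(S)/(|S|-1)$, attained by $S_t$.
\item Otherwise $|S_{t+1}|\ge2$. Set $\lambda_{t+1}=\widthf(S_{t+1})/(|S_{t+1}|-1)$, which is strictly less than $\lambda_t$.
\end{itemize}
Finally, $\beta_{\min}$ is the minimum of the per-anchor values over $a\in[n]$, with the corresponding set as witness.

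\textbf{Step 3: the iteration bound, which is the main obstacle.} Strong polynomiality needs $O(n)$ iterations per anchor, which gives $O(n^2)$ submodular-function minimizations overall. I would prove the standard Dinkelbach monotonicity claim: the denominators $|S_t|-1$ strictly decrease across iterations.

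The argument compares the two minimizers. Since $S_{t+1}$ minimizes $g^a_{\lambda_t}$, we have $g^a_{\lambda_t}(S_{t+1})\le g^a_{\lambda_t}(S_t)=0$. Since $S_{t+1}$ was a minimizer with negative value, we have $g^a_{\lambda_{t+1}}(S_{t+2})<0=g^a_{\lambda_{t+1}}(S_{t+1})$. Adding these inequalities gives
\[
(\lambda_t-\lambda_{t+1})\bigl(|S_{t+2}|-|S_{t+1}|\bigr)<0.
\]
Because $\lambda_{t+1}<\lambda_t$, this yields $|S_{t+2}|<|S_{t+1}|$.

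Cardinalities lie in $\{2,\ldots,n\}$, so there are at most $n$ iterations per anchor. Care is needed at the degenerate case where the minimum is exactly $0$ but attained also by larger sets; the stopping test handles this. The procedure uses only value-oracle calls and comparisons of rationals formed from oracle values and integers, so it is strongly polynomial even with $d$ part of the input.
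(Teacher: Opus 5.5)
Your route is sound but organized differently from the paper's. The paper runs one parametric search on the global quantity $M(\lambda)=\min_{\emptyset\ne S}\{\widthf(S)-\lambda(|S|-1)\}$. In each iteration it solves all $n$ anchored minimizations, keeps the best set $T$ overall, and updates a single common $\lambda$. It starts from a globally minimum-width pair, so every unsuccessful $T$ has $|T|\ge3$. This gives at most $n-2$ unsuccessful rounds plus one successful round, each costing $n$ minimizations, for at most $n(n-1)$ SFMs.

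You instead split the problem into $n$ independent Dinkelbach searches, one for each value $\min_{S\ni a,\,|S|\ge2}\widthf(S)/(|S|-1)$. Each search is over a genuinely submodular function on the lattice $\mathcal S_a$, and you take the minimum over $a$ at the end, for at most $n^2$ SFMs. In your version each subproblem is a textbook ratio minimization with a submodular numerator, so the standard Dinkelbach analysis applies verbatim, and you also get the anchored optimum for every $a$. The paper's version shares progress across anchors: a small ratio found from any anchor lowers $\lambda$ for all of them. Its cardinality argument is run on the global, non-submodular problem, using only that the best anchored minimizer attains $M(\lambda)$.

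One step needs fixing, and it is the one you yourself call the main obstacle. The first inequality you write, $g^a_{\lambda_t}(S_{t+1})\le g^a_{\lambda_t}(S_t)=0$, is not the right comparison. Adding it to $g^a_{\lambda_{t+1}}(S_{t+2})<g^a_{\lambda_{t+1}}(S_{t+1})=0$ only gives $(\lambda_{t+1}-\lambda_t)(|S_{t+1}|-1)+g^a_{\lambda_{t+1}}(S_{t+2})<0$, which says nothing about cardinalities.

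You must test the optimality of $S_{t+1}$ at $\lambda_t$ against the \emph{next} minimizer, $g^a_{\lambda_t}(S_{t+1})\le g^a_{\lambda_t}(S_{t+2})$. This is legitimate since $S_{t+2}\in\mathcal S_a$. Adding it to the second inequality gives exactly $(\lambda_t-\lambda_{t+1})(|S_{t+2}|-|S_{t+1}|)<0$. Also, the strict inequality $g^a_{\lambda_{t+1}}(S_{t+2})<0$ comes from iteration $t+1$ being unsuccessful, not from $S_{t+1}$ having negative value.

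With this correction, the cardinalities $|S_1|>|S_2|>\cdots$ lie in $\{2,\ldots,n\}$. So each anchor uses at most $n-1$ unsuccessful iterations plus one successful one, and your $O(n^2)$ bound follows.
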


%% file: discussion.tex
\section{Discussion}
\begin{enumerate}
    \item Many arguments in this paper, such as the uncrossing argument, are independent of the $\widthf$ function we are using. Thus, these arguments can be applied to a broader class of set functions.
    \item We are able to give an explicit characterization of the right-hand sides of all facets in the higher-dimensional case. We are hoping to give an explicit description for all facets, not just the right-hand sides. 
\end{enumerate}

%% file: appendix.tex
\section{Omitted Proofs in Section~\ref{sec:basic}}\label{apdx:basic}

\crossing*

\begin{proof}
	Let $\phi(A):=\widthf(A)-\alpha(A)$.  By
	Proposition~\ref{prop:width-properties} and modularity of $\alpha$,
	\[
	\phi(S\cup T)+\phi(S\cap T)\le\phi(S)+\phi(T)=-2\beta.
	\]
	Validity gives $\phi(S\cup T)\ge-\beta$ and $\phi(S\cap T)\ge-\beta.$
	Both inequalities must therefore hold at equality, proving that the
	union and intersection are tight.  It also follows that \[\widthf(S\cup T) + \widthf(S\cap T) = \widthf(S)+ \widthf(T) \]  The incidence-vector identity $\ind^S+\ind^T=\ind^{S\cap T}+\ind^{S\cup T}$
	then proves the stated equality.
\end{proof}

\laminarSpan*

\begin{proof}
	For $S\in\mathcal T(\alpha,\beta)$, define
	\[
	\cL(S):=\{R\in\mathcal L:R\text{ crosses }S\}.
	\]
	We prove by induction on $|\cL(S)|$ that
	$(\widthf(S),\ind^S)\in\aff(\widehat{\mathcal L})$.
	
	If $\cL(S)=\varnothing$, then $S$ is compatible with every member of
	$\mathcal L$.  Maximality of $\mathcal L$ gives $S\in\mathcal L$.
	
	Now suppose $\cL(S)\ne\varnothing$ and choose $R\in\cL(S)$.  By
	Lemma~\ref{lem:crossing}, both $S\cap R$ and $S\cup R$ are tight, and
	\[
	(\widthf(S),\ind^S)
	=(\widthf(S\cap R),\ind^{S\cap R})
	+(\widthf(S\cup R),\ind^{S\cup R})
	-(\widthf(R),\ind^R).
	\]
	Moreover,
	\[
	\cL(S\cap R)\subsetneq\cL(S),\qquad
	\cL(S\cup R)\subsetneq\cL(S).
	\]
	Indeed, if $Q\in\mathcal L$ crosses $S\cap R$, laminarity of $Q$ and $R$
	forces $Q\subsetneq R$, and hence $Q$ crosses $S$.  A similar case
	distinction shows that every member of $\mathcal L$ crossing $S\cup R$
	also crosses $S$.  Finally, $R$ crosses $S$ but crosses neither
	$S\cap R$ nor $S\cup R$, so both inclusions are strict.
	
	The induction hypothesis therefore applies to $S\cap R$ and $S\cup R$,
	while $R\in\mathcal L$.  Hence
	$(\widthf(S),\ind^S)\in\aff(\widehat{\mathcal L})$.  The reverse
	inclusion follows from $\mathcal L\subseteq\mathcal T(\alpha,\beta)$.
\end{proof}

\section{Omitted Proofs in Section~\ref{sec:onedim}}\label{apex:onedim}

\binaryNodeWithNewElement*

\begin{proof}
Before proving the statement, we first show that the union of two consecutive children is not tight.
For $i=1,\ldots,k-1$, define $U_i:=S_i\cup S_{i+1}.$
We claim that $U_i$ is compatible with every member of $\mathcal L$.

Indeed, let $R\in\mathcal L$. Since $R$ and $S'$ belong to the laminar
family, either $R\cap S'=\emptyset$, $S'\subseteq R$, or $R\subsetneq S'$.
The first two cases clearly make $R$ compatible with $U_i$. In the last
case, since $S_1,\ldots,S_k$ are the children of $S'$, the set $R$ is
contained in one of these children. Hence either $R\subseteq U_i$ or
$R\cap U_i=\emptyset$. Thus $U_i$ is compatible with every member of
$\mathcal L$.

If $U_i$ were tight, maximality of $\mathcal L$ would therefore imply
$U_i\in\mathcal L$. But $S_i\subsetneq U_i\subsetneq S',$
contradicting the fact that $S_i$ is a child of $S'$. Consequently,
$U_i$ is not tight. By Lemma~\ref{lem:disjoint-tight-gap}, for every $i=1,\ldots,k-1$, we have $\gap(S_i,S_{i+1})>\beta.$ We use this key result to prove the Lemma.

\paragraph{Show $k\le 2$.} To this end, we use validity after deleting an extreme child: $S_1$ or $S_k$.

Suppose for the contrary that $\{j\}\not \prec S_2$ , then $p^j\ge\ell(S_2).$
Let $T:=S'\setminus S_1.$
Because $S_1$ is the leftmost child and $p^j\ge\ell(S_2)$, deleting $S_1$
moves the left endpoint from $\ell(S_1)$ to $\ell(S_2)$ and does not change the
right endpoint. Hence
\[
\widthf(S')-\widthf(T)
=
\ell(S_2)-\ell(S_1)
=
\widthf(S_1)+\gap(S_1, S_2).
\]
Since $S_1$ is tight, $\alpha(S_1)=\widthf(S_1)+\beta.$
Using tightness of $S'$ and the identity $\alpha(T)=\alpha(S')-\alpha(S_1),$
we obtain
\begin{align*}
\widthf(T)-\alpha(T)
&=
\widthf(S')-\alpha(S')
-\bigl(\widthf(S')-\widthf(T)\bigr)
+\alpha(S_1)\\
&=
-\beta
-\bigl(\widthf(S_1)+\gap(S_1, S_2)\bigr)
+\bigl(\widthf(S_1)+\beta\bigr)\\
&=
-\gap(S_1, S_2)\\
&< -\beta.
\end{align*}
which contradicts the validity of the facet at the support $T$. Therefore $\{j\}\prec S_2$. A symmetric argument applied to $T':=S'\setminus S_k$
shows that $\{j\}\succ S_{k-1}.$

If $k\ge3$, then $\ell(S_2)\le u(S_{k-1}),$
whereas the above arguments require
\begin{equation}\label{eq:k>=3-contradiction}
    p^j<\ell(S_2)
\le
u(S_{k-1})
<p^j,
\end{equation}
a contradiction. Hence $k\le2.$

\paragraph{Show $S_1\prec\{j\}\prec S_2$ and $\alpha_j=\gap(S_1,S_2)-\beta.$ when $k =2$.} Suppose now that $k=2$. Then the inequality~\eqref{eq:k>=3-contradiction} become $u(S_1)<p^j<\ell(S_2),$
and therefore $S_1\prec\{j\}\prec S_2.$

Because $p^j$ lies strictly between the two children, adding $j$ does not
change the width of their union. Hence
\[
\widthf(S')
=
\widthf(S_1)+\gap(S_1, S_2)+\widthf(S_2).
\]
Using tightness of $S'$, $S_1$, and $S_2$, we obtain
\begin{align*}
-\beta
&=
\widthf(S')-\alpha(S')\\
&=
\widthf(S_1)+\widthf(S_2)+\gap(S_1, S_2)
-\alpha(S_1)-\alpha(S_2)-\alpha_j\\
&=
-2\beta+\gap(S_1, S_2)-\alpha_j.
\end{align*}
Therefore, $\alpha_j=\gap(S_1, S_2)-\beta
=
\gap(S_1,S_2)-\beta.$

\paragraph{The ``moreover'' part.} Evaluate the valid facet inequality on
$S_1\cup\{j\}$. Since $S_1\prec\{j\}$,
\begin{align*}
\widthf(S_1\cup\{j\})
-\alpha(S_1\cup\{j\})
&=
\widthf(S_1)+p^j-u(S_1)
-\alpha(S_1)-\alpha_j\\
&=
p^j-u(S_1)-\beta-\alpha_j\\
&=
p^j-\ell(S_2)\\&\ge-\beta.
\end{align*}
where the last inequality is by validity at $S_1 \cup \{j\}$.
Applying the same argument to $S_2\cup\{j\}$ gives $p^j-u(S_1)\le\beta.$
If $\ell(S_2)-p^j=\beta$, the displayed equality also shows that
$S_1\cup\{j\}$ is tight. This set is compatible with every member of
$\mathcal L$ and lies strictly between $S_1$ and $S'$, contradicting
maximality of $\mathcal L$. Hence $\ell(S_2)-p^j<\beta$.
The argument for $p^j-u(S_1)<\beta$ is symmetric, using
$S_2\cup\{j\}$. This completes the proof.
\end{proof}

\manyChildreUnion*

\begin{proof}
By Lemma~\ref{lem:disjoint-tight-gap}, every adjacent gap satisfies $ \operatorname{gap}(S_i,S_{i+1})>\beta.$
Therefore
\begin{align*}
 &\widthf\left(\bigcup_{i=1}^kS_i\right)
 -\alpha\left(\bigcup_{i=1}^kS_i\right)\\
 &\qquad=
 \sum_{i=1}^k[\widthf(S_i)-\alpha(S_i)]
 +\sum_{i=1}^{k-1}\operatorname{gap}(S_i,S_{i+1})\\
 &\qquad>
 -k\beta+(k-1)\beta
 =-\beta.
\end{align*}
Thus the union is not tight.
\end{proof}

\oneChild*
\begin{proof}
Suppose $p^j>u(S)$. Tightness of $S'$ and $S$ gives $\widthf(S')-\bar\alpha(S')=-\bar\beta$ and $\widthf(S)-\bar\alpha(S)=-\bar\beta.$
Subtracting the second equality from the first, and using $S'=S\cup\{j\}$,
gives $\bar\alpha_j=\widthf(S')-\widthf(S)=p^j-u(S)$,
where the last equality is because $\ell(S')=\ell(S)$.

The argument for the case $p^j<\ell(S)$ is symmetric.

Now suppose inserting $j$ into $S$ does not change its width. Then $\widthf(S')=\widthf(S\cup\{j\})=\widthf(S)$. Tightness at $S'$ and $S$ gives $\alpha_j=\widthf(S')-\widthf(S)=0$.
Finally, $\alpha_j\ge0$ by the formula. If $\alpha_j=\beta$, Lemma~\ref{lem:singleton-alpha} would imply that $\{j\}$ is a leaf, contrary to $j$ being introduced at the non-leaf node $S'$. Hence $\alpha_j<\beta$.
\end{proof}

\onedSeparation*

\begin{proof}
First check the trivial inequalities $w\ge0$ and $0\le z_i\le1$.
Otherwise, solve \eqref{eq:mcf}. By Lemma~\ref{lem:dual-mcf} and strong
duality, its optimal value equals that of \eqref{eq:sep-primal}.

The lower bounds in \eqref{eq:mcf} can be removed by the standard flow
shift. Let $\pi_v$ denote optimal node potentials for the resulting
minimum-cost flow problem. Node potentials are the dual variables of
the flow-balance equations, and satisfy $\pi_u-\pi_v\le c_{uv}$ for every arc $(u,v).$
Define $\alpha_i:=\pi_{i^+}-\pi_{i^-},$ and $\beta:=\pi_t-\pi_s.$
The zero-cost point arcs imply $\alpha_i\ge0$, while the zero-cost path
$s\to i^-\to i^+\to t$ implies $\beta\ge0$. Furthermore, for every
interval $[l,r]$,
\[
\begin{aligned}
\sum_{i=l}^r\alpha_i-\beta
&=
(\pi_s-\pi_{l^-})
+\sum_{i=l}^{r-1}
(\pi_{i^+}-\pi_{(i+1)^-})
+(\pi_{r^+}-\pi_t)\\
&\le
\sum_{i=l}^{r-1}(p_{i+1}-p_i)
=
p_r-p_l.
\end{aligned}
\]
Thus $(\alpha,\beta)$ is feasible for \eqref{eq:sep-primal}. The dual
objective of the minimum-cost flow problem is $\sum_{i=1}^n\hat z_i\alpha_i-\beta,$
so optimal node potentials recover an optimal solution of
\eqref{eq:sep-primal}. This use of node potentials and reduced costs is
standard in minimum-cost flow. See
\cite{Orlin1993} as an example. Hence $w-\alpha^\top z\ge-\beta$
is a most violated normalized inequality.

The network has $O(n)$ nodes and arcs. Therefore, using a strongly
polynomial minimum-cost flow algorithm, separation is strongly
polynomial. In particular, Orlin's algorithm gives
$O(n^2\log^2 n)$ time for this network \cite{Orlin1993}.
\end{proof}

\section{Repeated Coordinates}
\label{subsec:repeated-coordinates}

We now remove the assumption that the one-dimensional coordinates are
distinct. Reindex the points, if necessary, so that $p^1\le p^2\le\cdots\le p^n.$
Let $\xi^1<\xi^2<\cdots<\xi^m$ be the distinct coordinate values, and define the corresponding coordinate
classes $C_r:=\{i\in[n]:p^i=\xi^r\},$ where $r\in[m].$
Thus $[n]=C_1\dot\cup C_2\dot\cup\cdots\dot\cup C_m.$
The width of a set depends only on which coordinate classes it intersects.
This suggests introducing one binary variable for the activation of each
class while retaining the original variables to record the multiplicity
within a class.

Define the reduced boxing set on the $m$ distinct coordinates by $\bar B
:=
\{
(w,y)\in\mathbb R\times\{0,1\}^m:
w\ge \overline{\widthf}(S(y))
\},$
where, for $J\subseteq[m]$,
\[
\overline{\widthf}(J)
:=
\begin{cases}
\max_{r\in J}\xi^r-\min_{r\in J}\xi^r,
    &J\neq\emptyset,\\
0,
    &J=\emptyset.
\end{cases}
\]
The set $\bar B$ is precisely the one-dimensional boxing set associated
with the distinct coordinates $\xi^1<\cdots<\xi^m,$
and hence the results of the preceding subsections apply directly to
$\conv(\bar B)$.

\subsection{Class activation and multiplicity}
For each class $C_r$, introduce a class-activation variable $y_r$.
Consider the polyhedron
\begin{equation}\label{eq:repeated-coordinate-extension}
\mathcal R
:=
\left\{
(w,z,y)\in \mathbb R^{1+n+m}:
\begin{array}{l}
(w,y)\in\conv(\bar B),\\[1mm]
z_i\le y_r
    \quad i\in C_r,\ r\in[m],\\[1mm]
y_r\le\displaystyle\sum_{i\in C_r}z_i
    \quad r\in[m],\\[2mm]
0\le z_i\le1
    \quad i\in[n]
\end{array}
\right\}.
\end{equation}

For binary $z$, the two linking inequalities force $y_r
=
\max_{i\in C_r}z_i,$
so $y_r$ records whether at least one point from coordinate class $C_r$
has been selected.

\begin{lemma}
\label{lem:coordinate-class-formulation}
The original boxing polyhedron (with repeated coordinates) satisfies $\conv(B)=\proj_{w,z}(\mathcal R).$
\end{lemma}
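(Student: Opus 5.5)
We prove the two inclusions separately.

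\textbf{Inclusion $\conv(B)\subseteq\proj_{w,z}(\mathcal R)$.} Since $\mathcal R$ is convex, it suffices to treat a point $(w,z)\in B$ with $z$ binary. Set $y_r:=\max_{i\in C_r}z_i$. Then $S(y)$ is exactly the set of classes met by $S(z)$, so $\overline{\widthf}(S(y))=\widthf(S(z))\le w$. Hence $(w,y)\in\bar B\subseteq\conv(\bar B)$. The linking inequalities $z_i\le y_r$ and $y_r\le\sum_{i\in C_r}z_i$ hold by construction, and $0\le z\le 1$ is immediate. So $(w,z,y)\in\mathcal R$.

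\textbf{Inclusion $\proj_{w,z}(\mathcal R)\subseteq\conv(B)$.} Take $(w,z,y)\in\mathcal R$. Write $(w,y)=\sum_k\lambda_k(w^k,y^k)+(\rho,0)$, where $(w^k,y^k)\in\bar B$ are binary points, $\lambda$ is a convex combination, and $\rho\ge0$ comes from the recession cone (Proposition~\ref{prop:elementary}). It would be convenient if $\mathcal R$ could be decoupled class by class, but the class constraints share $y$, so we disaggregate $z$ directly instead.

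For each class $r$, look at the vertices $y^k$ with $y^k_r=1$. Their total weight is $y_r=\sum_{k:y^k_r=1}\lambda_k$. The vector $(z_i)_{i\in C_r}$ lies in the polytope
\[
P_r:=\bigl\{x\in[0,1]^{C_r}:\ x_i\le y_r,\ \textstyle\sum_{i} x_i\ge y_r\bigr\}.
\]
We decompose this vector as $\sum_{k:y^k_r=1}\lambda_k x^{k,r}$, where each $x^{k,r}$ is a nonzero binary vector on $C_r$. Vertices with $y^k_r=0$ get $x^{k,r}=0$.

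\textbf{Key step (expected main obstacle).} We must show that every $x\in P_r$ can be written as $y_r$ times a convex combination of nonzero binary vectors. Equivalently, $x/y_r$ must lie in $\conv\{v\in\{0,1\}^{C_r}:v\neq0\}=\{v\in[0,1]^{C_r}:\sum_i v_i\ge1\}$. This identity holds because the polytope $\{v\in[0,1]^{C_r}:\sum_i v_i\ge1\}$ is defined by a totally unimodular system, so all its vertices are binary and nonzero.

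There is a subtlety: the same weights $\lambda_k$ must be used across all classes simultaneously. We handle this by refining the decomposition. For each $k$, split $\lambda_k$ into a product distribution over independent choices of $x^{k,r}$, one choice per class $r$ with $y^k_r=1$. Because the classes are disjoint coordinate blocks, this refinement is consistent.

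\textbf{Assembling the decomposition.} Each refined point is a binary $z^{k}$. Its class-activation pattern is $y^k$, so $\widthf(S(z^k))=\overline{\widthf}(S(y^k))\le w^k$, and therefore $(w^k,z^k)\in B$. Averaging these points and adding the recession term $(\rho,0)$ recovers $(w,z)$. This gives $(w,z)\in\conv(B)$. The case $y_r=0$ is trivial, since then $z_i=0$ for all $i\in C_r$.
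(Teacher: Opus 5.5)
Your proof is correct and follows the same route as the paper. For the forward inclusion you lift binary points via $y_r=\max_{i\in C_r}z_i$. For the reverse inclusion you decompose $(w,y)$ into binary points of $\bar B$ and then, independently for each active class, choose a nonempty subset of $C_r$ from a distribution with marginals $z_i/y_r$. This rests on the fact that $\{x\in[0,1]^{C_r}:\sum_i x_i\ge1\}$ is the convex hull of the nonzero binary vectors. You are more explicit than the paper on three points it leaves implicit: the scaling by $y_r$, the product-distribution refinement that keeps the weights $\lambda_k$ consistent across classes, and the total-unimodularity argument for that polytope identity.
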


\begin{proof}
Let $(w,z)\in \conv(B)$ where $z$ is binary, we find $y\in\{0,1\}^m$ such that $(w,z,y)\in \mathcal R$. Define $y_r:=\max_{i\in C_r}z_i,$ for all $r\in[m].$ Then $y\in\{0,1\}^m$, the linking inequalities in
\eqref{eq:repeated-coordinate-extension} hold, and $\widthf(S(z))
=
\overline{\widthf}(S(y)).$
Thus every point of $B$ has a lift to $\mathcal R$. Since
$\mathcal R$ is convex, we have $\conv(B)\subseteq\proj_{w,z}(\mathcal R).$

For the reverse inclusion, take $(w,z,y)\in\mathcal R.$
Since $(w,y)\in\conv(\bar B)$, it can be expressed as a convex
combination of points whose class-activation vectors are binary, together
with a possible nonnegative multiple of the recession direction.
Thus it suffices to refine such a convex combination so that the
individual points inside every active coordinate class reproduce the
marginals $z$.

The polytope $\{
x\in[0,1]^{C_r}:
\sum_{i\in C_r}x_i\ge1
\}$
is the convex hull of the nonzero binary vectors on $C_r$. Hence there
exists a probability distribution on the nonempty subsets
$T\subseteq C_r$ whose incidence-vector marginals are $x$.

For each binary vector $\bar y$ appearing in the convex combination of
$(w,y)$, we construct a binary vector $\bar z$ on the original index set
as follows. For every coordinate class $C_r$:

\begin{itemize}
    \item if $\bar y_r=0$, set $\bar z_i=0 \ \forall i\in C_r;$

    \item if $\bar y_r=1$, choose a nonempty subset $T_r\subseteq C_r$
    according to the distribution described above, and set
    \[
    \bar z_i=
    \begin{cases}
    1, & i\in T_r,\\
    0, & i\in C_r\setminus T_r.
    \end{cases}
    \]
\end{itemize}

Thus, for every class $C_r$, $\bar y_r=1$ if and only if $\sum_{i\in C_r}\bar z_i\ge1.$
Hence $\bar y$ and $\bar z$ activate exactly the same coordinate classes.

Since all points in $C_r$ have the same coordinate value $\xi_r$, the
width depends only on which coordinate classes are active, and not on how
many points are selected within an active class. Therefore $\widthf(S(\bar z))= \overline{\widthf}(S(\bar y)).$
Consequently $(w,z)$ is a convex combination
of points of $B$, together with a possible nonnegative recession
direction. Hence $(w,z)\in\conv(B).$
\end{proof}

\subsection{Lifting facets from the distinct-coordinate instance}

Let
\begin{equation}\label{eq:reduced-coordinate-facet}
w-\sum_{r=1}^m\bar\alpha_r y_r\ge-\beta
\end{equation}
be a nontrivial facet of $\conv(\bar B)$. By Proposition~4.2, $\bar\alpha_r\ge0$ for all $r\in[m].$

For each $r\in[m]$, choose one representative $\rho_r\in C_r.$
The corresponding \emph{representative lift} of
\eqref{eq:reduced-coordinate-facet} is
\begin{equation}\label{eq:representative-lift}
w-\sum_{r=1}^m\bar\alpha_r z_{\rho_r}\ge-\beta.
\end{equation}

\begin{lemma}
\label{lem:representative-lifting}
Every representative lift
\eqref{eq:representative-lift} of a nontrivial facet
\eqref{eq:reduced-coordinate-facet} is facet-defining for
$\conv(B)$.
\end{lemma}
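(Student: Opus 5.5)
We prove validity first and then facetness, by exhibiting $n+1$ affinely independent tight points.

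\emph{Validity.} Take any $S\subseteq[n]$ and let $J:=\{r\in[m]:S\cap C_r\neq\varnothing\}$ be the set of classes it activates. We have $\widthf(S)=\overline{\widthf}(J)$. The lifted inequality only charges representatives, so
\[
\sum_r\bar\alpha_r\ind^S_{\rho_r}=\sum_{r\in J,\ \rho_r\in S}\bar\alpha_r\le\bar\alpha(J),
\]
where the inequality uses $\bar\alpha\ge0$. Hence $\widthf(S)-\sum_r\bar\alpha_r\ind^S_{\rho_r}\ge\overline{\widthf}(J)-\bar\alpha(J)\ge-\beta$ by validity of \eqref{eq:reduced-coordinate-facet} for $\bar B$. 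Validity also follows from Lemma~\ref{lem:coordinate-class-formulation} together with $z_{\rho_r}\le y_r$.

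\emph{Facetness.} Since \eqref{eq:reduced-coordinate-facet} is a facet of the full-dimensional $\conv(\bar B)$, it has $m+1$ tight sets $J_0,\dots,J_m\subseteq[m]$ whose points $(\overline{\widthf}(J_k),\ind^{J_k})$ are affinely independent. For each $J_k$, let $S_k:=\{\rho_r:r\in J_k\}$. Then $\widthf(S_k)=\overline{\widthf}(J_k)$ and the $z$-coefficient sum equals $\bar\alpha(J_k)$, so $S_k$ is tight for \eqref{eq:representative-lift}. The map $J\mapsto S$ is an affine injection on the representative coordinates and is zero elsewhere, so these $m+1$ points are affinely independent.

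For each non-representative $i\in C_r\setminus\{\rho_r\}$ we need one further tight point. We want a tight set $J_k$ with $r\in J_k$; then $S_k\cup\{i\}$ is tight because it has the same width and the same coefficient sum. Its difference from $(\widthf(S_k),\ind^{S_k})$ is $(0,e_i)$. These directions $e_i$ for non-representatives are independent of each other and of the previous points, which are supported on representatives. This gives $m+1+(n-m)=n+1$ affinely independent tight points, so \eqref{eq:representative-lift} is facet-defining.

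The main obstacle is showing that every $r\in[m]$ lies in some tight set of the reduced facet. Lemma~\ref{lem:roots-cover}, applied to the distinct-coordinate instance $\bar B$, gives this directly: every element belongs to a root of the laminar forest, and every node of that forest is tight. An alternative argument: if $r$ lay in no tight set, every tight point would satisfy $y_r=0$, so the facet would be contained in the hyperplane $y_r=0$ and would coincide with the face $y_r\ge0$. That face does not involve $w$, contradicting nontriviality.
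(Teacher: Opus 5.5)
Your proof is correct and follows the paper's argument essentially step for step: validity from $\bar\alpha\ge0$ and the fact that the width depends only on the activated classes, then the $m+1$ representative tight supports plus one extra tight point $R\cup\{i\}$ (differing by $(0,e_i)$) for each non-representative $i$. The one place you go beyond the paper is the claim that every class $r$ lies in some tight support of the reduced facet, which the paper simply asserts. Both of your justifications are valid: Lemma~\ref{lem:roots-cover} applied to $\bar B$, or the observation that otherwise the facet would coincide with the face $y_r\ge 0$, which does not involve $w$.
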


\begin{proof}
We first prove validity. Let $S\subseteq[n]$, and let $J(S):=\{r\in[m]:S\cap C_r\neq\varnothing\}$
be the set of active coordinate classes. Since the width depends only on
the active classes, $\widthf(S)=\overline{\widthf}(J(S)).$
Moreover, $\sum_{r=1}^m
\bar\alpha_r\chi^S_{\rho_r}
\le
\sum_{r\in J(S)}\bar\alpha_r,$
because $\bar\alpha\ge0$. Validity of
\eqref{eq:reduced-coordinate-facet} therefore gives
\[
\widthf(S)
-
\sum_{r=1}^m\bar\alpha_r\chi^S_{\rho_r}
\ge
\overline{\widthf}(J(S))
-
\sum_{r\in J(S)}\bar\alpha_r
\ge-\beta.
\]
Hence \eqref{eq:representative-lift} is valid.

It remains to prove facetness. Fix any pair of a nontrivial facet~\eqref{eq:reduced-coordinate-facet} of $\conv(\bar B)$ and its representative lift~\eqref{eq:representative-lift}.
Since
\eqref{eq:reduced-coordinate-facet} is a facet of the full-dimensional
polyhedron $\conv(\bar B)$, choose $m+1$ affinely independent tight
points $(\overline{\widthf}(J^s),\chi^{J^s}),$ $J^s\in 2^{[m]}$ indexed by $s=0,\ldots,m.$
For every $s$, define the representative support $R^s:=\{\rho_r\in C_r:r\in J^s\}.$
Then $\widthf(R^s)=\overline{\widthf}(J^s),$
and therefore $(\widthf(R^s),\chi^{R^s}),$
are tight for \eqref{eq:representative-lift} for every $s=0,\ldots,m$. Their projection onto the
coordinates $(w,z_{\rho_1},\ldots,z_{\rho_m})$
is exactly the original collection of $m+1$ affinely independent tight
points, so these lifted points are affinely independent.

Now fix $i\in C_r\setminus\{\rho_r\}.$
Every class $r$ must be contained in some tight support $J^s\subseteq 2^{[m]}$ of a facet of $\conv(\bar B)$. For such an $s$, the support $R^s\cup\{i\}$
has the same width as $R^s$, because both $i$ and $\rho_r$ have coordinate
$\xi^r$. The coefficient of $z_i$ in
\eqref{eq:representative-lift} is zero. Therefore, $(\widthf(R^s),\chi^{R^s\cup\{i\}})$
is also tight. Its difference from
$(\widthf(R^s),\chi^{R^s})$ is exactly the unit vector in coordinate $z_i$.

Constructing $(\widthf(R^s),\chi^{R^s\cup\{i\}})$ for every  $i\in C_r\setminus\{\rho_r\}$ gives $n-m$ additional independent directions. Together with the $m+1$ representative tight points, we obtain $n+1$ affinely independent tight points. Therefore \eqref{eq:representative-lift} defines a facet of $\conv(B)$.
\end{proof}

\subsection{Complete facet description}

We can now extend the distinct-coordinate description to arbitrary
one-dimensional instances.

\begin{theorem}
\label{thm:repeated-coordinate-facets}
Let $C_1,\ldots,C_m$
be the coordinate classes associated with the distinct values $\xi^1<\cdots<\xi^m.$
Consider any nontrivial facet~\eqref{eq:reduced-coordinate-facet} of $\conv(\bar B)$. For every $r$ with $\bar\alpha_r>0$, choose one
representative $\rho_r\in C_r$ for each class $C_r$. The representative
lifts~\eqref{eq:representative-lift} for every $\rho_r\in C_r$ and for all $C_r$ is a nontrivial facet of $\conv(B)$.

Consequently, $\conv(B)$ is described by $w\ge0,$ $0\le z_i\le1,$ for all $i\in[n],$ together with all such representative lifts.
\end{theorem}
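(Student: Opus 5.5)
The plan is to prove the two assertions in order. First, every representative lift of a nontrivial facet of $\conv(\bar B)$ is a nontrivial facet of $\conv(B)$. Second, these lifts, together with the trivial inequalities, describe $\conv(B)$.

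The first part is essentially Lemma~\ref{lem:representative-lifting}. For each facet~\eqref{eq:reduced-coordinate-facet} and each choice of representatives, that lemma gives validity and facetness. Choosing representatives only for classes with $\bar\alpha_r>0$ is harmless: classes with $\bar\alpha_r=0$ contribute nothing to~\eqref{eq:representative-lift}, whatever representative is used. Nontriviality follows because $\beta>0$ is unchanged.

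For the completeness direction, I would use Lemma~\ref{lem:coordinate-class-formulation}, which gives $\conv(B)=\proj_{w,z}(\mathcal R)$. The description of $\conv(\bar B)$ is known from Corollary~\ref{thm:onedim-complete-facet-description}: the inequalities $w\ge0$, $0\le y_r\le1$, and the nontrivial facets $w-\bar\alpha^\top y\ge-\beta$. I would project out $y$ from $\mathcal R$ by Fourier--Motzkin elimination. The constraints involving $y_r$ fall into three groups:
\begin{itemize}
\item lower bounds $y_r\ge z_i$ for $i\in C_r$, and $y_r\ge0$;
\item upper bounds $y_r\le\sum_{i\in C_r}z_i$ and $y_r\le1$;
\item the facet inequalities $\sum_r\bar\alpha_r y_r\le w+\beta$, where $\bar\alpha\ge0$.
\end{itemize}
The facet constraints give each $y_r$ only an upper-bound role, so the sensible choice is to set every $y_r$ as small as possible. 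Concretely, set $y_r:=\max_{i\in C_r}z_i$. This value satisfies $y_r\le\sum_{i\in C_r}z_i$ and $y_r\le1$, so it respects the upper bounds. Hence $(w,z)\in\proj(\mathcal R)$ if and only if $0\le z\le1$ and $(w,\max_{i\in C_r}z_i)_r\in\conv(\bar B)$. Monotonicity helps here: $\conv(\bar B)$ is down-closed in $y\ge0$ because all $\bar\alpha\ge0$.

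It then remains to write the condition $w-\sum_r\bar\alpha_r\max_{i\in C_r}z_i\ge-\beta$ as a family of linear inequalities. The maximum is the maximum over all linear choices, so the condition is equivalent to $w-\sum_r\bar\alpha_r z_{\rho_r}\ge-\beta$ for every choice $\rho_r\in C_r$. These are exactly the representative lifts. Classes with $\bar\alpha_r=0$ drop out, so representatives are needed only where $\bar\alpha_r>0$, matching the statement. The inequality $w\ge0$ and the bounds $0\le y_r\le 1$ translate into $w\ge0$ and $0\le z_i\le1$.

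I expect the main obstacle to be justifying this projection cleanly. The key point is that the minimal choice $y_r=\max_i z_i$ is feasible for every constraint of $\mathcal R$ involving $y$ whenever some feasible $y$ exists. I would argue this directly: any feasible $y$ satisfies $y_r\ge\max_{i\in C_r}z_i$, and lowering coordinates of $y$ preserves every facet inequality (since $\bar\alpha\ge0$), preserves $y\ge0$, and preserves the upper bounds. This avoids Fourier--Motzkin bookkeeping entirely and completes the proof.
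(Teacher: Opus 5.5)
Your proposal is correct and follows essentially the same route as the paper: facetness and validity of each lift come from Lemma~\ref{lem:representative-lifting}, and completeness comes from setting $y_r=\max_{i\in C_r}z_i$ and choosing $\rho_r\in\arg\max_{i\in C_r}z_i$, so that the corresponding lift reproduces each reduced facet at $(w,y)$, and then applying Lemma~\ref{lem:coordinate-class-formulation}. Your additional down-closedness argument for the other direction of the projection characterization is correct but not needed, since Lemma~\ref{lem:representative-lifting} already supplies validity of the lifts.
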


\begin{proof}
Lemma~\ref{lem:representative-lifting} shows that every displayed
representative lift is facet-defining and valid for $\conv(B)$.

It remains to show that these inequalities describe the entire polyhedron.
Suppose $(w,z)$ satisfies the coordinate bounds, $w\ge0$, and every
representative lift of every nontrivial facet of $\conv(\bar B)$. For every $r\in[m]$, define $y_r:=\max_{i\in C_r}z_i.$
It follows that $z_i\le y_r$ for all $i\in C_r,$
and $y_r\le\sum_{i\in C_r}z_i.$

Consider an arbitrary nontrivial facet for the reduced polyhedron $\conv(\bar B)$: $w-\sum_{r=1}^m\bar\alpha_r y_r\ge-\beta.$
For each $r$, choose $\rho_r\in
\arg\max_{i\in C_r}z_i.$
By assumption, $(w,z)$ satisfies every representative lift of this
reduced facet, and in particular it satisfies the lift corresponding to
this choice of representatives. So we have
\[
w-\sum_{r=1}^m\bar\alpha_r z_{\rho_r}
=
w-\sum_{r=1}^m\bar\alpha_r y_r
\ge-\beta.
\]
Thus $(w,y)$ satisfies every facet of $\conv(\bar B)$, including its
trivial facets, and therefore $(w,y)\in\conv(\bar B).$
Lemma~\ref{lem:coordinate-class-formulation} now gives $(w,z)\in\conv(B).$

Hence the displayed inequalities form an exact description of
$\conv(B)$. Since every nontrivial inequality in this description is
facet-defining by Lemma~\ref{lem:representative-lifting}, every
nontrivial facet of $\conv(B)$ is one of these representative lifts.
\end{proof}

An immediate structural consequence is worth recording.

\begin{corollary}\label{cor:one-positive-coefficient-per-class}
Let $w-\alpha^\top z\ge-\beta$
be a nontrivial facet of $\conv(B)$. For every coordinate class $C_r$,
at most one variable indexed by $C_r$ has a positive coefficient.
More precisely, there exists a facet coefficient
$\bar\alpha_r$ of the reduced distinct-coordinate instance such that
either $\alpha_i=0$ for all $i\in C_r,$
or there is a unique representative $\rho_r\in C_r$ satisfying $\alpha_{\rho_r}=\bar\alpha_r>0,$ and $\alpha_i=0$ for all $i\in C_r\setminus\{\rho_r\}$.
\end{corollary}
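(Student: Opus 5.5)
The plan is to read the corollary off Theorem~\ref{thm:repeated-coordinate-facets}, which says every nontrivial facet of $\conv(B)$ is a representative lift of some nontrivial facet of $\conv(\bar B)$. Then check that representative lifts have the stated support pattern. Given the nontrivial facet $w-\alpha^\top z\ge-\beta$, normalize it by Proposition~\ref{prop:normalization}; the normalization is unique up to positive scaling. By Theorem~\ref{thm:repeated-coordinate-facets}, it coincides with some representative lift~\eqref{eq:representative-lift} of a nontrivial facet~\eqref{eq:reduced-coordinate-facet} of $\conv(\bar B)$, with the same $\beta$. This identification holds because both are normalized with coefficient one on $w$, and the final paragraph of that theorem's proof shows every nontrivial facet of $\conv(B)$ appears among the lifts.

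Next read off the coefficients class by class. In a lift, the coefficient of $z_i$ is $\bar\alpha_r$ if $i=\rho_r$ and $0$ for every other $i\in C_r$. Also $\bar\alpha_r\ge0$ by Proposition~\ref{prop:normalization} applied to $\conv(\bar B)$. If $\bar\alpha_r=0$, every $\alpha_i$ with $i\in C_r$ vanishes. If $\bar\alpha_r>0$, exactly the index $\rho_r$ carries the positive value $\bar\alpha_r$, and it is unique because all other indices in $C_r$ have coefficient zero. This gives the dichotomy, and in particular at most one positive coefficient per class.

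The main obstacle is the rigor of the first step. Theorem~\ref{thm:repeated-coordinate-facets} shows that the lifts, together with the trivial inequalities, describe $\conv(B)$. To conclude that every facet is a lift, I would invoke the standard fact that a full-dimensional polyhedron (Proposition~\ref{prop:elementary}) has a unique irredundant description by its facets. Hence every facet must appear, up to positive scaling, in any valid description. The facet is nontrivial, so it is not $w\ge0$ or a bound $0\le z_i\le1$, and it must therefore equal one of the representative lifts.

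As a sanity check, I would also verify the pattern directly from tight sets, though this is not needed: two indices $i,i'\in C_r$ both with $\alpha>0$ would contradict validity. Take a tight set $T$ containing $i$. Adding $i'$ leaves the width unchanged but raises $\alpha(T)$, so $\widthf(T\cup\{i'\})-\alpha(T\cup\{i'\})<-\beta$. The only gap is when $i'$ already belongs to every tight set containing $i$, which is why the reduction through Theorem~\ref{thm:repeated-coordinate-facets} is the cleaner route.
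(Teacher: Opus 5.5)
Your proposal is correct and follows the paper's route. The paper states the corollary without a separate proof, as an immediate consequence of the final sentence in the proof of Theorem~\ref{thm:repeated-coordinate-facets}. That sentence identifies every nontrivial facet with a representative lift, relying implicitly on the same uniqueness-of-facets fact for the full-dimensional $\conv(B)$ that you make explicit, and the lift's coefficient pattern then gives the dichotomy. Your optional direct check is not needed, but its gap closes easily: if every tight set containing $i$ also contained $i'$ and vice versa, all tight points would lie in $\{z_i=z_{i'}\}$, contradicting facetness (though this alone would not identify the positive coefficient with $\bar\alpha_r$).
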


This also gives a direct lifting interpretation of duplicate points.

\begin{remark}[Zero-distance duplicates]
\label{rem:zero-distance-duplicates}
Suppose $\rho_r\in C_r$ has already been assigned the positive coefficient
$\bar\alpha_r$ of a reduced facet, and let
$i\in C_r\setminus\{\rho_r\}$. The exact sequential lifting coefficient
of $z_i$ is zero.

Indeed, since the reduced facet is facet-defining, some tight support
contains class $r$. Using $\rho_r$ as the selected representative of that
class gives a tight support $S$ containing $\rho_r$. Adding $i$ does not
change the width: $\widthf(S\cup\{i\})=\widthf(S).$
Since $i$ and $\rho_r$ occupy the same coordinate, the lifting problem
therefore attains value zero at $S$. Nonnegativity of lifting
coefficients gives the reverse inequality. Thus the duplicate receives
coefficient zero.

Hence zero-distance duplicates do not modify the one-dimensional endpoint
or $\beta$-block structure: the first chosen representative of a
coordinate class carries the class coefficient, while all additional
copies can be lifted with coefficient zero.
\end{remark}

Theorem~\ref{thm:repeated-coordinate-facets} gives a direct extension of
Algorithm~\ref{alg:onedim-facet-generation}: enumerating all reduced facets and all choices of representatives generates every nontrivial facet of the original repeated-coordinate instance.

\subsection{Separation with repeated coordinates}
The separation algorithm for repeated coordinates follows directly from the distinct-coordinate case. Given a point $(\widehat w,\widehat z)$, define $\widehat y_r:=\max_{i\in C_r}\widehat z_i$ for each coordinate class $C_r$. We then apply the minimum-cost flow separation algorithm of Theorem~\ref{thm:separation-1d} to the reduced instance with distinct coordinates $\xi_1<\cdots<\xi_m$ and candidate point $(\widehat w,\widehat y)$. If the algorithm returns a violated inequality, we recover an inequality in the original variables by choosing $\rho_r\in\arg\max_{i\in C_r}\widehat z_i$ and assigning the coefficient of each reduced variable $y_r$ to $z_{\rho_r}$, with zero coefficients for all other variables in $C_r$. By Theorem~\ref{thm:repeated-coordinate-facets}, the resulting inequality is valid for $\conv(B)$ and has exactly the same violation at $(\widehat w,\widehat z)$ as the reduced inequality at $(\widehat w,\widehat y)$. Consequently, separation with repeated coordinates reduces to the distinct-coordinate minimum-cost flow problem on the $m$ coordinate classes.

\section{Computing Facet-admissible Right-hand-sides}
\label{subsec:computing-beta}

Recall that the box-width function
$\operatorname{wd}:2^{[n]}\to\mathbb R_+$
is normalized, monotone, and intersecting submodular.
In particular, $\operatorname{wd}(\emptyset)=
\operatorname{wd}(\{i\})=0$ for every $i\in[n]$.

We first show that $\beta_S^*$ and a maximizing partition
can be computed in polynomial time. We then establish
that every maximizing partition is facet-admissible. Thus, this concludes that it is necessary and sufficient to enumerate all subset of $[n]$

\subsection{A parametric partition problem}\label{subsubsec:parametrization}

For any $\lambda\in\mathbb R$, consider
\begin{equation}\label{eq:H-lambda}
H_S(\lambda)
=
\min_{\mathcal P\in\Pi(S)}
\left\{
\sum_{A\in\mathcal P}(\operatorname{wd}(A)
+\lambda)
\right\}.
\end{equation}
The one-block partition $\{S\}$ has objective value
$\operatorname{wd}(S)+\lambda$. Suppose the optimal value satisfies $H_S(\lambda)=\operatorname{wd}(S)+\lambda$.
Then for any partition $\mathcal P$ with at least two blocks,
\begin{align*}
&\sum_{A\in\mathcal P}\operatorname{wd}(A)
+|\mathcal P|\lambda
\ge \operatorname{wd}(S)+\lambda\\
&\qquad\Longleftrightarrow\quad
\lambda\ge
\frac{\operatorname{wd}(S)
-\sum_{A\in\mathcal P}\operatorname{wd}(A)}
{|\mathcal P|-1}
=\beta_S(\mathcal P).
\end{align*}
Consequently, $
\lambda\ge\beta_S^*.$
Thus $\beta_S^*$ is the smallest parameter $\lambda$
for which the one-block partition is optimal in
\eqref{eq:H-lambda}.

We next establish a primal-dual characterization
of $H_S(\lambda)$.

\begin{lemma}[Primal-dual characterization]
\label{lem:partition-primal-dual}
For every $\lambda\in\mathbb R$,
$H_S(\lambda)=F_S(\lambda)$, where
\begin{equation}\label{eq:partition-primal}
\begin{aligned}
F_S(\lambda)=
\max_{x\in\mathbb R^S}\quad
&x(S)\\
\text{\rm s.t.}\quad
&x(T)\le\operatorname{wd}(T)+\lambda,
&&\emptyset\ne T\subseteq S.
\end{aligned}
\tag{P$_\lambda$}
\end{equation}
\end{lemma}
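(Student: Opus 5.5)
We prove the two inequalities separately. Weak duality gives $F_S(\lambda)\le H_S(\lambda)$. For the reverse, we use LP duality together with an uncrossing argument that relies on the intersecting submodularity of $\widthf$ (Proposition~\ref{prop:width-properties}(iii)). Throughout, set $f(T):=\widthf(T)+\lambda$ for $\emptyset\ne T\subseteq S$.

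\emph{Weak duality.} Let $x$ be feasible for \eqref{eq:partition-primal} and let $\mathcal P\in\Pi(S)$. Then $x(S)=\sum_{A\in\mathcal P}x(A)\le\sum_{A\in\mathcal P}f(A)$, so $F_S(\lambda)\le H_S(\lambda)$.

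\emph{LP duality.} The primal is feasible: take $x_i=-M$ for $M$ large. Since $x$ is free, its LP dual is
\[
\min\Bigl\{\sum_{\emptyset\ne T\subseteq S}f(T)\,y_T:\ \sum_{T\ni i}y_T=1\ \ \forall i\in S,\ \ y\ge0\Bigr\}.
\]
The dual is feasible, for instance by putting unit weight on each singleton. Hence both problems have finite optima and these optima are equal. Every partition $\mathcal P$ gives an integral dual solution of cost $\sum_{A\in\mathcal P}f(A)$, and conversely every $0/1$ dual solution is a partition. It therefore suffices to show that the dual has an integral optimal solution.

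\emph{Uncrossing.} Among all optimal dual solutions (a nonempty polytope), choose $y$ maximizing the linear functional $\sum_T |T|^2y_T$. Suppose two sets $T,U$ with $y_T,y_U>0$ cross. Because they cross, $T\cap U\neq\varnothing$. Let $\varepsilon=\min\{y_T,y_U\}$, decrease $y_T$ and $y_U$ by $\varepsilon$, and increase $y_{T\cap U}$ and $y_{T\cup U}$ by $\varepsilon$. This preserves every coverage equation, because $\ind^T+\ind^U=\ind^{T\cap U}+\ind^{T\cup U}$.

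The cost changes by $\varepsilon\bigl(f(T\cap U)+f(T\cup U)-f(T)-f(U)\bigr)$. Both $T\cap U$ and $T\cup U$ are nonempty, so the constant $\lambda$ appears twice on each side and cancels. The change is then at most $0$ by \eqref{eq:intersecting-submod}, so the new solution is still optimal. Since $T$ and $U$ cross, we also have $|T\cap U|^2+|T\cup U|^2>|T|^2+|U|^2$, so the functional strictly increases. This contradicts the choice of $y$, and therefore the support $\mathcal L$ of $y$ is laminar.

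\emph{Integrality.} Restrict the dual to the variables indexed by $\mathcal L$. The constraint matrix has rows indexed by $i\in S$ and columns given by the incidence vectors of the laminar family $\mathcal L$. Such a matrix is totally unimodular: order the members of $\mathcal L$ suitably and each row has the consecutive-ones property. The restricted LP is feasible (witnessed by $y$) with optimal value equal to the dual optimum, so it has an integral optimal vertex $y'$.

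Since $y'$ is $0/1$ and satisfies $\sum_{T\ni i}y'_T=1$ for every $i$, the sets in its support form a partition $\mathcal P$ of $S$. Its cost is $\sum_{A\in\mathcal P}f(A)\ge H_S(\lambda)$ and equals the dual optimum $F_S(\lambda)$. Hence $H_S(\lambda)\le F_S(\lambda)$, and together with weak duality, $H_S(\lambda)=F_S(\lambda)$.

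The step requiring care is the uncrossing. One must check that crossing sets always intersect, so that \eqref{eq:intersecting-submod} applies and the $\lambda$ terms cancel. This is the only point where the failure of full submodularity (for disjoint sets) matters.
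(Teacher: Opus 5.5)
Your proof is correct and follows the paper's argument essentially step for step. Both use strong LP duality, then choose a dual optimum maximizing $\sum_T|T|^2y_T$ so that intersecting submodularity forces a laminar support, and then use total unimodularity of the laminar incidence matrix.

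One justification needs fixing. With rows indexed by the elements of $S$, it is not true in general that some ordering of the members of $\mathcal L$ gives each row consecutive ones. For $\mathcal L=\{\{1,2,3\},\{1\},\{2\},\{3\}\}$, the column $\{1,2,3\}$ would have to be adjacent to each of the other three columns, which is impossible. Instead, order the elements of $S$ along a depth-first traversal of the laminar forest, so that every member of $\mathcal L$ is an interval. This makes the transpose an interval matrix, which is totally unimodular, and total unimodularity is preserved under transposition.
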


\begin{proof}
The dual of \eqref{eq:partition-primal} is
\begin{equation}\label{eq:partition-dual}
\begin{aligned}
\min_{y\ge0}\quad&
\sum_{\emptyset\ne T\subseteq S}
(\operatorname{wd}(T)+\lambda)y_T\\
\text{\rm s.t.}\quad&
\sum_{T\ni i}y_T=1,
&&i\in S.
\end{aligned}
\tag{D$_\lambda$}
\end{equation}
The dual constraints are equalities because
the primal variables $x_i$ are unrestricted.
Both programs are feasible and bounded in
objective value. Hence strong LP duality applies.

Every partition $\mathcal P\in\Pi(S)$ induces an
integral feasible solution of
\eqref{eq:partition-dual} by setting $y_T=1$
for $T\in\mathcal P$ and $y_T=0$ otherwise.
Conversely, every integral feasible solution of
\eqref{eq:partition-dual} corresponds to a
partition of $S$. Therefore, it suffices to
show that \eqref{eq:partition-dual} admits an
integral optimal solution.

Among all optimal solutions of
\eqref{eq:partition-dual}, choose $y^*$ maximizing
$\sum_{\emptyset\ne T\subseteq S}|T|^2y_T$.
Such a solution exists since the feasible region
of \eqref{eq:partition-dual} is compact.

We claim that the support
$\mathcal L=\{T\subseteq S:y_T^*>0\}$ is laminar.
Suppose otherwise. Then there exist
$A,B\in\mathcal L$ such that
$A\cap B$, $A\setminus B$, and $B\setminus A$
are all nonempty.

Let $\delta=\min\{y_A^*,y_B^*\}>0$.
Decrease $y_A^*$ and $y_B^*$ by $\delta$,
and increase $y_{A\cap B}^*$ and
$y_{A\cup B}^*$ by $\delta$.
The resulting solution remains feasible because
$\chi^A+\chi^B=\chi^{A\cap B}+\chi^{A\cup B}$. Moreover, intersecting submodularity implies
\begin{align*}
\operatorname{wd}(A)+\operatorname{wd}(B)+2\lambda\ge
\operatorname{wd}(A\cap B)
+\operatorname{wd}(A\cup B)+2\lambda.
\end{align*}
Thus the objective value does not increase.
Since $y^*$ is optimal, the resulting solution
is also optimal.

However, the modification increases
$\sum_T|T|^2y_T$ by
\begin{align*}
&\delta\bigl(
|A\cap B|^2+|A\cup B|^2
-|A|^2-|B|^2
\bigr)=
2\delta|A\setminus B|\,|B\setminus A|>0,
\end{align*}
contradicting the choice of $y^*$.
Hence $\mathcal L$ is laminar.

Restrict \eqref{eq:partition-dual} to the
variables indexed by $\mathcal L$.
Its constraint matrix is the incidence matrix
$[\chi^T]_{T\in\mathcal L}$, which is totally
unimodular because $\mathcal L$ is laminar.
Since the right-hand side is integral, the
restricted LP admits an integral optimal solution.

Extending this solution by zero outside
$\mathcal L$ gives an integral optimal solution
of \eqref{eq:partition-dual}. Its positive
support forms a partition of $S$.
Consequently, the optimal value of
\eqref{eq:partition-dual} equals
$H_S(\lambda)$.
By strong LP duality,
$F_S(\lambda)=H_S(\lambda)$.
\end{proof}

\subsection{Existence of facet-admissible partitions}

We now relate $\beta_S^*$ to the facets
of $\conv(B)$.

Recall that a partition
$\mathcal P=\{S_1,\ldots,S_k\}$
of $S$, with $k\ge2$, is
facet-admissible if there exists a
facet-defining inequality
$w-\alpha^\top z\ge-\beta$
of $\conv(B)$ for which $S$ and
every $S_i$ are tight.

\allSubsetAdmissible*

\begin{proof}
Fix $S\subseteq[n]$ with $|S|\geq 2$.
Since the singleton partition is feasible,
$\beta_S^*\geq \widthf(S)/(|S|-1)\geq 0$.

By the definition of $\beta_S^*$,
$H_S(\beta_S^*)=\widthf(S)+\beta_S^*$.
Lemma~\ref{lem:partition-primal-dual} therefore implies that evaluating 
$(P_\lambda)$ at
$\lambda=\beta_S^*$ has an optimal
solution $x\in\R^S$ satisfying
\begin{align*}
x(T)&\leq\widthf(T)+\beta_S^*,
\quad\emptyset\neq T\subseteq S,\\
x(S)&=\widthf(S)+\beta_S^*.
\end{align*}
Extend $x$ to $\alpha\in\R^n$ by setting
$\alpha_i=x_i$ for $i\in S$ and
$\alpha_i=0$ otherwise. It suffices to show that
$w-\alpha^\top z\geq-\beta_S^*$
is valid for $\conv(B)$.

Take any $T\subseteq[n]$
with $T\cap S\neq\varnothing$,
monotonicity gives
\[
\alpha(T)=x(T\cap S)
\leq\widthf(T\cap S)+\beta_S^*
\leq\widthf(T)+\beta_S^*.
\]
If $T\cap S=\varnothing$, then
$\alpha(T)=0\leq\widthf(T)+\beta_S^*$.
Thus the inequality is valid.
Moreover, $S$ is tight.

Now let $\mathcal P=\{S_1,\ldots,S_k\}$
be any partition attaining $\beta_S^*$.
By the definition of $\beta_S^*$,
\[
\widthf(S)+\beta_S^*
=\sum_{i=1}^k(\widthf(S_i)+\beta_S^*).
\]
Since the $S_i$ partition $S$,
$\sum_{i=1}^k x(S_i)=x(S)$.
Together with the tightness of $S$,
this gives
\[
0=\sum_{i=1}^k
\bigl(\widthf(S_i)+\beta_S^*-x(S_i)\bigr).
\]
Every summand is nonnegative by
feasibility of $x$. Hence every
summand must vanish, and therefore
\[
x(S_i)-\beta_S^*=\widthf(S_i),
\qquad i=1,\ldots,k.
\]
Thus $S,S_1,\ldots,S_k$ are
simultaneously tight for the valid
inequality
$w-\alpha^\top z\geq-\beta_S^*$.
By Lemma~\ref{lem:admissible-partition}, there exists a
facet-defining inequality with the same
right-hand side for which these sets
remain tight. Hence $\mathcal P$ is
facet-admissible.
\end{proof}

\subsection{A polynomial-time algorithm for
$\boldsymbol{\beta_S^*}$}

By the parametrization in Section~\ref{subsubsec:parametrization}, we need to search through $\lambda$ and find the smallest $\lambda$ such that $H_S(\lambda)=\operatorname{wd}(S)+\lambda$. We give a constructive
algorithm that computes $H_S(\lambda)$ and
recovers an optimal partition. This algorithm will be used as a subroutine for computing $\beta_S^*$.

\begin{lemma}
\label{lem:constructive-partition}
For any fixed $\lambda\in\mathbb R$,
$H_S(\lambda)$ and an optimal partition
can be computed using at most $|S|$
submodular-function minimizations.
\end{lemma}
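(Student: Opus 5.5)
Since $H_S(\lambda)=F_S(\lambda)$ by Lemma~\ref{lem:partition-primal-dual}, we will compute $F_S(\lambda)$ and an optimal partition with a greedy, Dilworth-truncation style procedure on the function $f_\lambda(T):=\widthf(T)+\lambda$ for nonempty $T\subseteq S$. By Proposition~\ref{prop:width-properties}, $f_\lambda$ is intersecting submodular: for intersecting $A,B$ the constants $\lambda$ cancel in~\eqref{eq:intersecting-submod}. Order $S=\{s_1,\ldots,s_m\}$ with $m=|S|$, and build $x$ one coordinate at a time. At step $k$, set
\[
x_{s_k}:=\min\Bigl\{\widthf(T\cup\{s_k\})+\lambda-x(T):\ T\subseteq\{s_1,\ldots,s_{k-1}\}\Bigr\}.
\]
This is the same structure as the lifting recursion~\eqref{eq:permutation-lifting-recursion}. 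For fixed $k$ and fixed prefix vector $x$, the map $T\mapsto \widthf(T\cup\{s_k\})-x(T)$ on subsets of the prefix is submodular. The reason is that all the sets $T\cup\{s_k\}$ share $s_k$, so intersecting submodularity becomes full submodularity there, and subtracting the modular term $x(T)$ preserves this. Hence each step is one submodular-function minimization, giving $|S|$ SFMs in total.

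\textbf{Feasibility.} We will show by induction that $x$ satisfies $x(T)\le f_\lambda(T)$ for every nonempty $T$ contained in the current prefix. Any new constraint involves $s_k$, so it has the form $T=T'\cup\{s_k\}$ with $T'$ in the earlier prefix. Such a constraint holds by the definition of $x_{s_k}$ as a minimum over $T'$.

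\textbf{Optimality and partition.} Record a minimizer $T_k$ at each step; then $T_k\cup\{s_k\}$ is tight for $x$. Because $f_\lambda$ is intersecting submodular, the uncrossing argument of Lemma~\ref{lem:crossing}, run with the validity constraints of $(P_\lambda)$ in place of those of $B$, shows that two intersecting tight sets have a tight union and a tight intersection. We therefore maintain a family of disjoint maximal tight sets covering the prefix. When $s_k$ is processed, the new tight set $T_k\cup\{s_k\}$ is merged with every current block it intersects, and the union stays tight. At the end we have a partition $\mathcal P$ of $S$ into tight blocks, so
\[
x(S)=\sum_{A\in\mathcal P}f_\lambda(A)\ \ge\ H_S(\lambda).
\]
Weak duality between $(P_\lambda)$ and $(D_\lambda)$ gives $x(S)\le F_S(\lambda)=H_S(\lambda)$. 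Hence equality holds, and $\mathcal P$ is an optimal partition.

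\textbf{Main obstacle.} The hardest step is making the block bookkeeping rigorous. We must show that the minimizer $T_k$ can be chosen as a union of existing blocks, or else argue directly that every current block meeting $T_k\cup\{s_k\}$ can be absorbed while remaining tight. Absorption needs the intersecting pairs to be genuinely nonempty intersections, which they are because we only merge with blocks that $T_k\cup\{s_k\}$ meets.

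A second point is the case $T_k=\varnothing$. Then $\{s_k\}$ itself is tight with $x_{s_k}=\lambda$, and it simply forms a new block. This case matters when $\lambda<0$ pushes $x$ negative, and it must be checked against monotonicity. If the merging argument proves delicate, I would fall back on taking the maximal tight sets containing each element. These are well defined by closure of tight sets under intersecting unions, and they partition $S$ because every element lies in some tight set after its own step.
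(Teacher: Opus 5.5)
Your proposal is correct and follows essentially the same route as the paper: the same coordinate-by-coordinate greedy construction (one submodular minimization per element), feasibility via the last element of each constraint set, closure of intersecting tight sets under union, and merging the tight sets $T_k\cup\{s_k\}$ into a tight partition whose value matches the bound $x(S)\le H_S(\lambda)$. The bookkeeping you flag as the main obstacle is handled in the paper simply by repeatedly replacing two intersecting tight sets by their union. The resulting blocks need not be maximal, and $T_k$ need not be a union of blocks, so your fallback via the maximal tight set containing each element works equally well.
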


\begin{proof}
Write $S=\{s_1,\ldots,s_m\}$ in an arbitrary
order, and let
$E_j=\{s_1,\ldots,s_j\}$, with $E_0=\emptyset$.

We construct a feasible solution of
\eqref{eq:partition-primal} sequentially.
At iteration $j$, having determined
$x_{s_1},\ldots,x_{s_{j-1}}$, set
\begin{equation}\label{eq:greedy-coordinate}
x_{s_j}
=
\min_{A\subseteq E_{j-1}}
\left\{
\operatorname{wd}(A\cup\{s_j\})
+\lambda-x(A)
\right\}.
\end{equation}

This is a standard submodular-function
minimization problem. Indeed, the function 
$A\mapsto\operatorname{wd}(A\cup\{s_j\})$
is submodular on $E_{j-1}$ because
every set involved in the submodularity
inequality contains $s_j$.
Subtracting the modular function $x(A)$
preserves submodularity.

We first verify that the resulting vector
$x$ is feasible for \eqref{eq:partition-primal}.
For any nonempty $T\subseteq S$, let $s_j$
be the last element of $T$ in the chosen
order. Then $T=A\cup\{s_j\}$ for some
$A\subseteq E_{j-1}$. By
\eqref{eq:greedy-coordinate},
\begin{align*}
x(T)=x(A)+x_{s_j}&\le x(A)+
\operatorname{wd}(A\cup\{s_j\})
+\lambda-x(A)=\operatorname{wd}(T)+\lambda.
\end{align*}

Next, let $A_j$ be any minimizer of
\eqref{eq:greedy-coordinate}, and define
$T_j=A_j\cup\{s_j\}$.
By construction,
\begin{align*}
x(T_j)=x(A_j)+x_{s_j}=\operatorname{wd}(T_j)+\lambda.
\end{align*}
Thus $T_j$ is tight for
\eqref{eq:partition-primal}.
Moreover, $s_j\in T_j$ for every $j$,
so $\bigcup_{j=1}^mT_j=S$.

We claim that the union of any two
intersecting tight sets is also tight.
Indeed, suppose $A$ and $B$ are tight
with $A\cap B\ne\emptyset$.
By intersecting submodularity and
feasibility of $x$,
\begin{align*}
x(A)+x(B)
&=\operatorname{wd}(A)+\operatorname{wd}(B)
+2\lambda\\
&\ge\operatorname{wd}(A\cup B)
+\operatorname{wd}(A\cap B)+2\lambda\\
&\ge x(A\cup B)+x(A\cap B)\\
&=x(A)+x(B).
\end{align*}
Hence all inequalities hold at equality,
and in particular
$x(A\cup B)=\operatorname{wd}(A\cup B)+\lambda$.

Starting with $T_1,\ldots,T_m$,
repeatedly replace any two intersecting
sets by their union. Every resulting set
remains tight, and the union of the
family remains $S$. Since each replacement
decreases the number of sets, the procedure
terminates with a partition
$\mathcal P_\lambda=\{B_1,\ldots,B_k\}$
of $S$ consisting entirely of tight sets.

Consequently,
\begin{align*}
x(S) =\sum_{i=1}^k x(B_i)=\sum_{i=1}^k
\bigl(\operatorname{wd}(B_i)+\lambda\bigr)\ge H_S(\lambda).
\end{align*}
On the other hand, for any partition
$\mathcal Q\in\Pi(S)$, feasibility of $x$
implies
\begin{align*}
x(S) =\sum_{A\in\mathcal Q}x(A)\le\sum_{A\in\mathcal Q}
\bigl(\operatorname{wd}(A)+\lambda\bigr).
\end{align*}
Taking the minimum over $\mathcal Q$ gives
$x(S)\le H_S(\lambda)$.
Therefore,
\begin{equation*}
H_S(\lambda)
=
x(S)
=
\sum_{A\in\mathcal P_\lambda}
\bigl(\operatorname{wd}(A)+\lambda\bigr).
\end{equation*}

Equivalently, assigning dual variable
$y_A=1$ for each $A\in\mathcal P_\lambda$
and zero otherwise gives a feasible
solution of \eqref{eq:partition-dual}
whose objective equals that of $x$.
Thus the construction also provides
a primal-dual optimality certificate.

There are $m$ minimizations in
\eqref{eq:greedy-coordinate}, and merging
the resulting tight sets takes polynomial
time. This proves the result.
\end{proof}

 The idea behind the parametrization is that the parameter \(\lambda\) acts as a penalty on the number of blocks in a partition. Thus, we start with the singleton partition and increase \(\lambda\) until the one-block partition becomes optimal. At each iteration, if the current minimizing partition \(\mathcal P\) has a smaller objective value than the one-block partition, we increase \(\lambda\) to \(\beta_S(\mathcal P)\), at which the two partitions have equal objective values. Since a partition with more blocks has a larger slope with respect to \(\lambda\), any partition with at least \(|\mathcal P|\) blocks can no longer improve upon the one-block partition. Consequently, every unsuccessful iteration produces a partition with strictly fewer blocks than the previous one. This guarantees polynomial termination.

Algorithm~\ref{alg:compute-beta} accomplishes
this using the constructive procedure from
Lemma~\ref{lem:constructive-partition}. In the algorithm, the subroutine 
$\operatorname{MinPartition}(S,\lambda)$ returns a partition attaining $H_S(\lambda)$ using Lemma~\ref{lem:constructive-partition}.

\begin{algorithm}[H]
\caption{Computing $\beta_S^*$}
\label{alg:compute-beta}
\begin{algorithmic}[1]
\Require $S\subseteq[n]$, $m=|S|\ge2$
\Ensure $\beta_S^*$ and a maximizing partition
\State $\mathcal R\gets
       \{\{i\}:i\in S\}$
\State $\lambda\gets
       \operatorname{wd}(S)/(m-1)$
\Loop
    \State $\mathcal P\gets
           \operatorname{MinPartition}(S,\lambda)$
    \If{$\displaystyle
          \sum_{A\in\mathcal P}
          \bigl(\operatorname{wd}(A)+\lambda\bigr)
          =\operatorname{wd}(S)+\lambda$}
        \State \Return $(\lambda,\mathcal R)$
    \EndIf
    \State $\lambda\gets
           \displaystyle
           \frac{\operatorname{wd}(S)-
           \sum_{A\in\mathcal P}\operatorname{wd}(A)}
           {|\mathcal P|-1}$
    \State $\mathcal R\gets\mathcal P$
\EndLoop
\end{algorithmic}
\end{algorithm}

Now we are ready to prove Theorem~\ref{thm:beta-polynomial}.
\betaPolynomial*

\begin{proof}
We establish correctness and polynomial
termination using the following loop
invariants. At the beginning of every
iteration, the current parameter $\lambda$
and reference partition $\mathcal R$ satisfy
\begin{align}
\lambda
&=\beta_S(\mathcal R)\le\beta_S^*,
\label{eq:invariant-1}\\
\sum_{A\in\mathcal Q}\operatorname{wd}(A)
+|\mathcal Q|\lambda
&\ge\operatorname{wd}(S)+\lambda
\quad
\forall\mathcal Q\in\Pi(S)
\text{ with }|\mathcal Q|\ge|\mathcal R|.
\label{eq:invariant-2}
\end{align}

\emph{Initialization.}
Initially, $\mathcal R$ is the singleton
partition. Since every singleton has
zero width,
\begin{align*}
\lambda =\frac{\operatorname{wd}(S)}{m-1} =\beta_S(\mathcal R)
\le\beta_S^*.
\end{align*}
Thus \eqref{eq:invariant-1} holds.
The only partition of $S$ with at least
$m$ blocks is $\mathcal R$, and
\begin{align*}
\sum_{A\in\mathcal R}\operatorname{wd}(A)
+m\lambda =m\lambda=\operatorname{wd}(S)+\lambda.
\end{align*}
Hence \eqref{eq:invariant-2} also holds.

\emph{Termination criterion.}
Suppose the subroutine returns a partition
$\mathcal P$ satisfying
\[
\sum_{A\in\mathcal P}
\bigl(\operatorname{wd}(A)+\lambda\bigr)
=H_S(\lambda)
=\operatorname{wd}(S)+\lambda.
\]
By construction,
$\lambda\ge\beta_S^*$.
Together with \eqref{eq:invariant-1},
this implies $\lambda=\beta_S^*$.
Since $\beta_S(\mathcal R)=\lambda$,
the partition $\mathcal R$ attains
$\beta_S^*$, and the algorithm returns
the desired solution.

\emph{Preservation of the invariants.}
Suppose instead that
$H_S(\lambda)<\operatorname{wd}(S)+\lambda$.
Let $\mathcal P$ be the optimal partition achieving $H_S(\lambda)$
returned by the subroutine. 
\begin{align*}
\sum_{A\in\mathcal P}(\operatorname{wd}(A)
+\lambda) =H_S(\lambda)<\operatorname{wd}(S)+\lambda.
\end{align*}
The one-block partition cannot satisfy
this strict inequality, so $|\mathcal P|\ge2$.
Rearranging yields
\begin{align}\label{eq:updated-tie}
 \lambda
<
\frac{\operatorname{wd}(S)
-\sum_{A\in\mathcal P}\operatorname{wd}(A)}
{|\mathcal P|-1} =:\lambda'.
\end{align}
Thus,
$\lambda<\lambda'\le\beta_S^*$ by the definition
of $\beta_S^*$. This establishes~\eqref{eq:invariant-1}.

We next show~\eqref{eq:invariant-2}. Fix a partition $\mathcal Q$ with 
$|\mathcal Q|\ge |\mathcal P|$.
Since $\mathcal P$ is a minimizer of $H_S(\lambda)$,
\begin{align*}
\sum_{A\in\mathcal Q}(\operatorname{wd}(A)
+\lambda)
\ge
\sum_{A\in\mathcal P}(\operatorname{wd}(A)
+\lambda).
\end{align*}
Since $|\mathcal Q|\ge |\mathcal P|$ and $\lambda'>\lambda$,
\begin{align*}
&
\sum_{A\in\mathcal Q}(\operatorname{wd}(A)
+\lambda')
\ge
\sum_{A\in\mathcal P}(\operatorname{wd}(A)
+\lambda') = \widthf(S) + \lambda'
.
\end{align*}
where the last equality follows from the definition of $\lambda'$ in~\eqref{eq:updated-tie}. This
establishes \eqref{eq:invariant-2}.

\emph{Polynomial termination.}
At every unsuccessful iteration,
the returned partition $\mathcal P$
satisfies
\begin{align*}
\sum_{A\in\mathcal P}\operatorname{wd}(A)
+|\mathcal P|\lambda
=H_S(\lambda)<\operatorname{wd}(S)+\lambda.
\end{align*}
By \eqref{eq:invariant-2}, this is
possible only if
$|\mathcal P|<|\mathcal R|$ where $\mathcal R$ is from the previous iteration.
Therefore, the number of blocks in
the reference partition strictly
decreases at every unsuccessful iteration.

Initially, $\mathcal R$ has $m$ blocks,
and every unsuccessful iteration
produces a partition with at least
two blocks. Hence there are at most
$m-2$ unsuccessful iterations,
followed by one successful iteration. Each iteration requires at most $m$
submodular-function minimizations by
Lemma~\ref{lem:constructive-partition}.
Thus the total number is at most
$m(m-1)=O(m^2)$. Since submodular-function minimization
admits strongly polynomial algorithms,
the result follows.
\end{proof}

\section{Computing $\beta_{\min}$}\label{sec:findBetaMin}
We show that $\beta_{\min}$ and a subset attaining
the minimum can be computed in strongly polynomial
time by exploiting the intersecting submodularity
of the box-width function.

For any $\lambda\in\mathbb R$, consider the
parametric optimization problem
\begin{equation}\label{eq:beta-min-parametric}
M(\lambda)
=
\min_{\emptyset\ne S\subseteq[n]}
\left\{
\operatorname{wd}(S)-\lambda(|S|-1)
\right\}.
\end{equation}

Since every singleton has zero width,
$M(\lambda)\le0$ for every $\lambda$.
Moreover, by the definition of $\beta_{\min}$,
\begin{align}
M(\lambda)=0
&\iff
\operatorname{wd}(S)-\lambda(|S|-1)\ge0
\quad\forall S\subseteq[n],\ |S|\ge2
\nonumber\\
&\iff
\lambda\le
\min_{\substack{S\subseteq[n]\\|S|\ge2}}
\frac{\operatorname{wd}(S)}{|S|-1}
\nonumber\\
&\iff
\lambda\le\beta_{\min}.
\label{eq:beta-min-characterization}
\end{align}
Thus $\beta_{\min}$ is the largest parameter
$\lambda$ for which $M(\lambda)=0$.

Although the box-width function is not
submodular on all subsets, the parametric
problem \eqref{eq:beta-min-parametric} can
be reduced to ordinary submodular-function
minimization by fixing an element of the subset.

\begin{lemma}
\label{lem:beta-min-sfm}
For any fixed $\lambda\in\mathbb R$,
$M(\lambda)$ and a subset attaining
$M(\lambda)$ can be computed using at most
$n$ submodular-function minimizations.
\end{lemma}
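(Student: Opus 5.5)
The plan is to anchor on an element, exactly as in Section~\ref{subsec:anchored-boxing}. For each $a\in[n]$, define
\[
M_a(\lambda):=\min_{A\subseteq[n]\setminus\{a\}}\bigl\{\widthf(A\cup\{a\})-\lambda|A|\bigr\}.
\]
Every nonempty $S$ contains some $a$, and for such $a$ we can write $S=A\cup\{a\}$ with $|S|-1=|A|$. Hence
\[
M(\lambda)=\min_{a\in[n]}M_a(\lambda),
\]
and a minimizer of $M(\lambda)$ is $A^*\cup\{a^*\}$, where $a^*$ attains the outer minimum and $A^*$ attains $M_{a^*}(\lambda)$. It therefore suffices to show that each $M_a(\lambda)$ is one submodular-function minimization over the ground set $[n]\setminus\{a\}$.

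Fix $a$. Set $f_a(A):=\widthf(A\cup\{a\})$ for $A\subseteq[n]\setminus\{a\}$. For any $A,A'$, the sets $A\cup\{a\}$ and $A'\cup\{a\}$ both contain $a$, so they intersect. Moreover,
\[
(A\cup\{a\})\cup(A'\cup\{a\})=(A\cup A')\cup\{a\},\qquad (A\cup\{a\})\cap(A'\cup\{a\})=(A\cap A')\cup\{a\}.
\]
Proposition~\ref{prop:width-properties}(iii) then gives
\[
f_a(A\cup A')+f_a(A\cap A')\le f_a(A)+f_a(A'),
\]
so $f_a$ is submodular on the full lattice $2^{[n]\setminus\{a\}}$. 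This is the same argument used for~\eqref{eq:greedy-coordinate} in Lemma~\ref{lem:constructive-partition}. The term $-\lambda|A|$ is modular for every real $\lambda$, so $A\mapsto f_a(A)-\lambda|A|$ is submodular. A value oracle for it costs $O(dn)$ time, since $\widthf$ is evaluated coordinatewise. Alternatively, one can use the explicit anchored representation $f_a(A)=\sum_{h\in\mathcal H}\max_{i\in A}q_h^{ai}$, which is a sum of submodular one-sided maxima.

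Running a strongly polynomial SFM algorithm (e.g.~\cite{Orlin2009}) for each of the $n$ anchors and taking the best value yields $M(\lambda)$ together with an attaining subset, using exactly $n$ minimizations. There is no real obstacle here. The one point to check is that the empty set $A=\varnothing$ is allowed in each anchored problem: it corresponds to the singleton $\{a\}$, which has value $0$ and is consistent with $M(\lambda)\le 0$. The requirement $S\neq\varnothing$ in~\eqref{eq:beta-min-parametric} is automatically enforced because the anchor is always included.

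In practice, each anchored problem can even be solved by the closure construction of Theorem~\ref{thm:multi-mixing-polytime} when $\lambda\ge0$, taking $c_i=\lambda$ and the anchored coefficients $q^a$. However, the generic SFM argument above suffices for the lemma as stated.
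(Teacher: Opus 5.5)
Your proposal is correct and follows the paper's proof essentially line for line. Both anchor on each $i\in[n]$ and use intersecting submodularity to show that $A\mapsto\widthf(A\cup\{i\})-\lambda|A|$ is submodular on $2^{[n]\setminus\{i\}}$. Both then write $M(\lambda)$ as the minimum over the $n$ anchored problems and return the best $A_i\cup\{i\}$. Your closing remark about the closure construction is an optional alternative that the lemma does not need.
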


\begin{proof}
For every $i\in[n]$, consider
\begin{equation}\label{eq:beta-min-anchored}
\min_{A\subseteq[n]\setminus\{i\}}
\left\{
\operatorname{wd}(A\cup\{i\})-\lambda|A|
\right\}.
\end{equation}

We first show that \eqref{eq:beta-min-anchored}
is a submodular-function minimization problem.
For any $A,B\subseteq[n]\setminus\{i\}$,
the sets $A\cup\{i\}$ and $B\cup\{i\}$
have a nonempty intersection. Therefore,
intersecting submodularity of the box-width
function implies
\begin{align*}
\operatorname{wd}(A\cup\{i\})
+\operatorname{wd}(B\cup\{i\})\ge
\operatorname{wd}((A\cap B)\cup\{i\})
+\operatorname{wd}((A\cup B)\cup\{i\}).
\end{align*}
Hence the function
$A\mapsto\operatorname{wd}(A\cup\{i\})$
is submodular on $[n]\setminus\{i\}$.
Since $A\mapsto\lambda|A|$ is modular,
subtracting it preserves submodularity.

Now every nonempty subset $S\subseteq[n]$
contains some element $i$. Writing
$A=S\setminus\{i\}$ gives
\begin{align*}
\operatorname{wd}(S)-\lambda(|S|-1)
&=\operatorname{wd}(A\cup\{i\})-\lambda|A|.
\end{align*}
Consequently,
\begin{equation}\label{eq:beta-min-sfm-reduction}
M(\lambda)
=
\min_{i\in[n]}
\min_{A\subseteq[n]\setminus\{i\}}
\left\{
\operatorname{wd}(A\cup\{i\})-\lambda|A|
\right\}.
\end{equation}

Thus $M(\lambda)$ can be evaluated by
solving \eqref{eq:beta-min-anchored}
once for each $i\in[n]$.
If $A_i$ is a minimizer for index $i$,
then a subset attaining $M(\lambda)$
is obtained by choosing the best set
among $A_i\cup\{i\}$, $i\in[n]$.
This requires at most $n$
submodular-function minimizations.
\end{proof}

By \eqref{eq:beta-min-characterization},
computing $\beta_{\min}$ amounts to finding
the largest $\lambda$ satisfying $M(\lambda)=0$.

We initialize $\lambda$ using a pair of
points with minimum $\ell_1$-distance.
For any two distinct indices $i,j$,
$\operatorname{wd}(\{i,j\})=\|p_i-p_j\|_1$.
Therefore,
\begin{align*}
\beta_{\min}
\le
\min_{\substack{i,j\in[n]\\i<j}}
\operatorname{wd}(\{i,j\})=
\min_{\substack{i,j\in[n]\\i<j}}
\|p_i-p_j\|_1.
\end{align*}

Starting from this upper bound, the algorithm
repeatedly solves \eqref{eq:beta-min-parametric}.
If a subset with negative objective value
is found, its ratio in \eqref{eq:beta-min}
provides a strictly smaller upper bound.

\begin{algorithm}[t]
\caption{Computing $\beta_{\min}$}
\label{alg:compute-beta-min}
\begin{algorithmic}[1]
\Require Points $p_1,\ldots,p_n\in\mathbb R^d$,
         $n\ge2$
\Ensure $\beta_{\min}$ and a minimizing subset
\State Choose
       $R\in\displaystyle
       \arg\min_{\substack{\{i,j\}\subseteq[n]\\i<j}}
       \operatorname{wd}(\{i,j\})$
\State $\lambda\gets\operatorname{wd}(R)$
\Loop
    \For{$i=1,\ldots,n$}
        \State Compute
        $A_i\in\displaystyle
        \arg\min_{A\subseteq[n]\setminus\{i\}}
        \left\{
        \operatorname{wd}(A\cup\{i\})-\lambda|A|
        \right\}$
    \EndFor
    \State Choose
    $i^*\in\displaystyle
    \arg\min_{i\in[n]}
    \left\{
    \operatorname{wd}(A_i\cup\{i\})
    -\lambda|A_i|
    \right\}$
    \State $T\gets A_{i^*}\cup\{i^*\}$
    \If{$\operatorname{wd}(T)-\lambda(|T|-1)=0$}
        \State \Return $(\lambda,R)$
    \EndIf
    \State $\lambda\gets
    \displaystyle\frac{\operatorname{wd}(T)}{|T|-1}$
    \State $R\gets T$
\EndLoop
\end{algorithmic}
\end{algorithm}

\polytimeBetaMin
\begin{proof}
We first prove correctness and then establish
polynomial termination.

\emph{Loop invariants.}
At the beginning of every iteration,
the current parameter $\lambda$ and
reference subset $R$ satisfy
\begin{align}
\lambda
&=\frac{\operatorname{wd}(R)}{|R|-1}
\ge\beta_{\min},
\label{eq:beta-min-invariant-1}\\
\lambda
&\le
\min_{\substack{i,j\in[n]\\i<j}}
\operatorname{wd}(\{i,j\}).
\label{eq:beta-min-invariant-2}
\end{align}

Both invariants hold at initialization:
$R$ is a minimum-width pair, and
$\lambda=\operatorname{wd}(R)$.
In particular, $\lambda$ is the ratio
associated with an actual subset of
cardinality at least two, so
$\lambda\ge\beta_{\min}$.

\emph{Termination criterion.}
Suppose the subset $T$ returned by the
submodular minimizations satisfies
\[
\operatorname{wd}(T)-\lambda(|T|-1)=0.
\]
By Lemma~\ref{lem:beta-min-sfm}, $T$
attains $M(\lambda)$, so $M(\lambda)=0$.
Equation~\eqref{eq:beta-min-characterization}
then gives $\lambda\le\beta_{\min}$.
Combining this with
\eqref{eq:beta-min-invariant-1},
we conclude that $\lambda=\beta_{\min}$.
Moreover, the reference subset $R$
attains this value by
\eqref{eq:beta-min-invariant-1}.
Thus the returned solution is correct.

\emph{Preservation of the invariants.}
Suppose the algorithm does not terminate.
Since every singleton has zero objective value,
the optimal value of
\eqref{eq:beta-min-parametric} must be negative:
\begin{equation}\label{eq:beta-min-negative}
\operatorname{wd}(T)-\lambda(|T|-1)
=M(\lambda)<0.
\end{equation}

In particular, $|T|\ge2$, so the update $\lambda'
=
\frac{\operatorname{wd}(T)}{|T|-1}$
is well-defined. Rearranging
\eqref{eq:beta-min-negative} gives
\begin{align*}
\operatorname{wd}(T)
&<\lambda(|T|-1),\\
\lambda'
&<\lambda.
\end{align*}

Furthermore, since $\lambda'$ is the
ratio associated with a subset of
cardinality at least two,
$\lambda'\ge\beta_{\min}$.
Together with $\lambda'<\lambda$,
this shows that both
\eqref{eq:beta-min-invariant-1} and
\eqref{eq:beta-min-invariant-2}
are preserved after updating
$(\lambda,R)\gets(\lambda',T)$.

\emph{Polynomial termination.}
We first observe that every subset $T$
returned at an unsuccessful iteration
has at least three elements.
Indeed, by \eqref{eq:beta-min-invariant-2},
every pair $\{i,j\}$ satisfies
\begin{align*}
\operatorname{wd}(\{i,j\})-\lambda
\ge
\operatorname{wd}(\{i,j\})
-\min_{r<s}\operatorname{wd}(\{r,s\})\ge0.
\end{align*}
A singleton also has objective value zero.
Hence a subset satisfying
\eqref{eq:beta-min-negative} must have
cardinality at least three.

We next show that the cardinalities of
the subsets returned at successive
unsuccessful iterations strictly decrease. Suppose $T$ is a minimizing subset at
the current parameter $\lambda$, with
$k=|T|\ge3$, and let
$\lambda'=\operatorname{wd}(T)/(k-1)$.
By optimality of $T$,
for every nonempty $U\subseteq[n]$,
\[
\operatorname{wd}(U)-\lambda(|U|-1)
\ge
\operatorname{wd}(T)-\lambda(k-1).
\]
Therefore, for any $U$ with $|U|\ge k$,
\begin{align*}
&\operatorname{wd}(U)-\lambda'(|U|-1)\\
&=
\operatorname{wd}(U)-\lambda(|U|-1)
+(\lambda-\lambda')(|U|-1)\\
&\ge
\operatorname{wd}(T)-\lambda(k-1)
+(\lambda-\lambda')(|U|-1)\\
&=
(\lambda'-\lambda)(k-1)
+(\lambda-\lambda')(|U|-1)\\
&=
(\lambda-\lambda')(|U|-k)\\
&\ge0.
\end{align*}

Consequently, at the updated parameter
$\lambda'$, every subset with at least
$k$ elements has nonnegative objective
value in \eqref{eq:beta-min-parametric}.

If the next iteration is unsuccessful,
its minimizing subset $T'$ must satisfy $\operatorname{wd}(T')-\lambda'(|T'|-1)<0.$
The preceding argument then implies
$|T'|<k=|T|$.

Thus the cardinalities of the subsets
returned at successive unsuccessful
iterations form a strictly decreasing
sequence of integers in $\{3,\ldots,n\}$.
There can be at most $n-2$
unsuccessful iterations, followed by
one successful iteration.

By Lemma~\ref{lem:beta-min-sfm}, each
iteration requires at most $n$
submodular-function minimizations.
Therefore, the total number of such
minimizations is at most $n(n-1)=O(n^2).$
Since submodular-function minimization
admits strongly polynomial algorithms,
the result follows.
\end{proof}